\numberwithin{equation}{section}
\newtheorem{theorem}{Theorem}[section]
\newtheorem{lemma}[theorem]{Lemma}
\theoremstyle{definition}
\newtheorem{definition}[theorem]{Definition}
\theoremstyle{remark}
\newtheorem{remark}[theorem]{Remark}
\newtheorem{assumption}{Assumption}[section]
\numberwithin{equation}{section}
\newcommand{\bx}{\boldsymbol{x}}
\newcommand{\by}{\boldsymbol{y}}
\newcommand{\bp}{\boldsymbol{p}}
\newcommand{\bA}{\boldsymbol{A}}
\newcommand{\bB}{\boldsymbol{B}}
\newcommand{\bK}{\boldsymbol{K}}
\newcommand{\bS}{\boldsymbol{S}}
\newcommand{\bQ}{\boldsymbol{Q}}
\newcommand{\ba}{\boldsymbol{a}}
\begin{document}

\title{ a note on the Monge-Amp{\`{e}}re  type equations with general source terms}

\author{Weifeng Qiu}
%    Address of record for the research reported here
\address{Department of Mathematics, City University of Hong Kong,
        83 Tat Chee Avenue, Hong Kong, China} 
%    Current address
\email{weifeqiu@cityu.edu.hk}

\author{Lan Tang}
%    Address of record for the research reported here
\address{School of Mathematics and Statistics, Central China Normal University, 430079 Wuhan, 
Hubei, People’s Republic of China}
%    Current address
\email{lantang@mail.ccnu.edu.cn}

\thanks{Weifeng Qiu is partially supported by a grant from the Research Grants 
Council of the Hong Kong Special Administrative Region, China (Project No. CityU 11302014).}

\begin{abstract}In this paper we  consider  the generalised solutions to the Monge-Amp{\`{e}}re  type equations with general source terms.  We firstly prove the so-called comparison principle   and then give some important propositions for the border of generalised solutions. Furthermore, we design well-posed finite element methods  for the generalised solutions with the  classical and weak Dirichlet boundary conditions respectively. 

\end{abstract}

\subjclass[2000]{65N30, 65L12}

\keywords{sub-differential, Monge-Amp{\`{e}}re equation, generalised solution, convex domain, convex function}

\maketitle

\section{Introduction}
Let $\Omega$  be a bounded open  convex domain in $\mathbb{R}^{d}$  and $W^{+}(\Omega)$ denote the set of convex functions over $\Omega$. Suppose that $\mu$ is a non-negative Borel measure in $\Omega$ and  $R$  is a locally integrable function in $\mathbb{R}^{d}$ with $R(\bp)>0$ for any $\bp \in \mathbb{R}^{d}$. 

In this article,  we mainly consider the generalised solutions to the  Monge-Amp{\`{e}}re  type equations with Dirichlet boundary conditions. Firstly,  for  the   Monge-Amp{\`{e}}re  type equations with the classical Dirichlet boundary condition, the generalised solution is defined as follows:

\begin{definition}
\label{def_ma_measure_eqs}
\textit{We call $u\in W^{+}(\Omega)\cap C(\overline{\Omega})$  a generalised solution to the classical Dirichlet problem of the Monge-Amp{\`{e}}re equation if  the following conditions hold:
\begin{subequations}
\label{ma_measure_eqs}
\begin{align}
\label{ma_measure_eq1}
\int_{\partial u(e)}R(\bp)d\bp & \hspace{10pt}= \hspace{10pt}\mu (e)\hspace{10pt}\text{for\ any Borel set } \ e\subset \Omega,\\
\label{ma_measure_eq2}
u & \hspace{10pt}=\hspace{10pt} g\hspace{10pt} \text{ on } \ \partial\Omega.
\end{align}
\end{subequations}
Here $\partial u(e)$ denotes the sub-differential of $u$ over $e$.}
\end{definition}

Furthermore,  we also consider the Dirichlet problem  with the  weak boundary condition. In this case,  we define generalised solution in the following way: 
\begin{definition}
\label{def_weak_eqs}
\textit{$u\in W^{+}(\Omega)$ is called a generalised solution  to the Monge-Ampere equation with  weak Dirichlet boundary condition, if it  satisfies
\begin{subequations}
\label{weak_eqs}
\begin{align}
\label{weak_eq1}
\displaystyle\int_{\partial u(e)}R(\bp)d\bp &\hspace{10pt} =\hspace{10pt} \displaystyle\mu (e)\quad \hspace{10pt} \text{ for any Borel set } \ e\subset\Omega,\\
\label{weak_eq2}
\limsup_{\Omega\ni\bx^{\prime}\rightarrow \bx} u(\bx^{\prime}) & \hspace{10pt}\leq \hspace{10pt}g(\bx)
\quad\hspace{10pt} \text{for any}\  \bx\in \partial\Omega,
\end{align}
\end{subequations}
and for any $v\in W^{+}(\Omega)$ satisfying (\ref{weak_eqs}),  it holds:
\begin{align}
\label{weak_eq3}
u(\bx) \hspace{10pt}\geq \hspace{10pt}v(\bx)\quad \text{for all}\  \bx \in \Omega.
\end{align}}
\end{definition}
\vspace{5pt}

The Dirichlet problems  (1.1) and (1.2) mentioned above  are natural generalisations of the following problem:
\begin{eqnarray} \left\{\begin{array}{lll}
\mathrm{det} (D^2 u)&=& \mu\ \ \  \text{in } \  \Omega,\\ \\
 \ \hspace{30pt} u&=&g\hspace{10pt} \text{ on } \ \partial\Omega.
\end{array}
\right.
\end{eqnarray}
The boundary problem (1.4), arising from analysis and geometry, plays a very important role in the area of PDEs, and it has received considerable study since 1950's.  This class of problems  were firstly solved in the generalised sense by Alexandrov\cite{Alexandrov58}and Bakelman\cite{Bakelman57}, where   they defined the generalised solution in the same  way as Definition 1.1 and they proved the existence and uniqueness of generalised solutions (see also \cite{Gutierrez01}). 

With additional assumptions that $d\mu= fdx$   and  $0<f\in C(\overline{\Omega})$,  Caffarelli \cite{Caffarelli90b} showed the equivalence between the notions of   generalised solutions and viscosity solutions (also see \cite{Gutierrez01}). Furthermore, for this special case,  there are various  regularity results  on generalised solutions  if $f$ , $\partial{\Omega}$ and  $g$  are certain regular. For the global regularity, the results were established by Cheng and Yau\cite{CY76, CY77},  Ivochkina\cite{Ivochkina80}, Krylov\cite{Krylov82, Krylov83a, Krylov83b}, Caffarelli, Nirenberg and Spruck\cite{CNS84}, Wang\cite{Wang96}, Trudinger-Wang\cite{TW08} and Savin\cite{Savin13}.  As for the interior regularity of generalised solutions,  we refer readers to the work of Caffarelli\cite{ Caffarelli90a, Caffarelli90b}, De Phillipis and Figalli\cite{DF13} and De Phillipis, Figalli and Savin\cite{DFS13}.

For general $R$ and $\mu$,   due to the lack of the regularity assumptions, it is impossible to relate generalised solutions to viscosity solutions and thus in the general case,  the study on  generalised solutions is totally different from that on viscosity solutions. Especially, the regularity problem on generalised solutions becomes tricky. For boundary problems problems (1.1) and (1.2), the solvability was firstly studied by Bakelman (see \cite{Bakelman94}). In his work, the results on  existence  and uniqueness of generalised solutions to (1.1) and (1.2) were established with more suitable assumptions on  $R$ and $\mu$.

In our work, the main goal  is to study the properties of generalised solutions  to the Dirichlet problems (1.1) and (1.2).  More precisely, we organise the remaining content of this article  as follows:  

Section 2 is devoted to proving  Theorem 2.1, the so-called comparison principle,  an important tool  for studying the generalised Monge-Ampere type equations. Theorem 2.1 is a generalisation of \cite[Theorem~$10.1$]{Bakelman94} and our proof is also inspired by the work of Bakelman. However, at some crucial steps, there are some gaps in the proof of \cite[Theorem~$10.1$]{Bakelman94}( we shall explain these in details in section 2). To overcome this difficulty, two important lemmas: Lemma 2.2 and Lemma 2.3 would be given to derive Theorem 2.1.

In section 3,  we consider boundary behaviour of convex function over $\Omega$. We give  Lemma 3.2, which describes some important proposition on the border of convex functions.

In section 4, we show  convergence properties of a sequence of convex functions and this section consists of  two parts: convergence of a sequence of convex functions inside convex domain and convergence of a sequence of borders of convex functions.

In section 5 and 6, a finite element method to the problem (1.1) would be given and shown to be well-posed. Furthermore, we prove that this finite element method converges to the exact solution to (1.1).

In section 7,  a well-posed finite element method would be designed for (1.2) and we prove the convergence of such method to the exact solution to (1.2).
\vspace{35pt}

\section{Comparison principle}
\label{sec_comparison}

In this section, we prove  the comparison principle:
\begin{theorem}
\label{thm_comparison}
Assume that  $z_{1}$ and $z_{2}\in W^{+}(\Omega)$ satisfy the conditions: $z_{1}\in C^{0}(\overline{\Omega})$ and $\displaystyle z_{1}(\bx) \geq \limsup_{\bx^{\prime}\rightarrow \bx, \bx^{\prime}\in \Omega} z_{2}(\bx)$ for any $ \bx \in \partial\Omega. $
%Let $z_{1},z_{2}\in W^{+}(\Omega)\cap C^{0}(\overline{\Omega})$ and
%$z_{1}\geq z_{2}$ on $\partial \Omega$. 
If the following inequality holds:
\begin{align*}
\int_{\partial z_{1}(e)}R(\bp)d\bp\  \leq \ \int_{\partial z_{2}(e)}R(\bp)d\bp \quad 
\text{ for every Borel set } e \subset \Omega,
\end{align*}
then
$z_{1}(\bx) \ \geq \ z_{2}(\bx) , \forall \bx\in \Omega.
$
\end{theorem}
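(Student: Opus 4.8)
The plan is to argue by contradiction: suppose the set $G \defn \{\bx \in \Omega : z_2(\bx) > z_1(\bx)\}$ is nonempty. By upper semicontinuity of $z_2$ and continuity of $z_1$, together with the boundary hypothesis $z_1(\bx) \ge \limsup_{\bx'\to\bx} z_2(\bx')$ on $\partial\Omega$, the open set $G$ is compactly contained in $\Omega$, and the function $z_2 - z_1$ attains a positive maximum $m > 0$ on $\overline{G}$, vanishing on $\partial G$. The idea is to compare the Monge-Amp\`ere measures of $z_1$ and $z_2$ on $G$ and derive an inequality in the wrong direction. First I would perturb: replace $z_1$ by $z_1 + \varepsilon$ for small $\varepsilon > 0$ (or localize the perturbation near the contact set) so as to separate the two functions and ensure the region where $z_2$ exceeds the perturbed $z_1$ is strictly interior; this is the standard device for running the comparison on convex functions.

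The core geometric fact I would invoke is monotonicity of the normal mapping on such a region: if $z_2 > z_1$ on the open set $G$ while $z_2 = z_1$ on $\partial G$, then $\partial z_1(G) \subset \partial z_2(G)$ (every supporting hyperplane to $z_1$ at an interior point of $G$ can be lowered and translated to become a supporting hyperplane to $z_2$ at some point of $\overline{G}$, and by the boundary equality that touching point lies in $G$). Hence, since $R > 0$,
\begin{align*}
\int_{\partial z_1(G)} R(\bp)\,d\bp \ \le \ \int_{\partial z_2(G)} R(\bp)\,d\bp.
\end{align*}
Combined with the hypothesis $\int_{\partial z_1(e)} R \le \int_{\partial z_2(e)} R$ for all Borel $e$, this alone is not yet a contradiction, so the real work is to produce a \emph{strict} reversal. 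Here I would use that at the maximum point $\bx_0 \in G$ one can construct a whole cone of supporting planes to $z_2$ (of "slopes" filling a small ball) that do \emph{not} support $z_1$ anywhere in $G$ — concretely, take the supporting plane to $z_1$ at $\bx_0$, tilt it slightly in all directions, and push it up until it first touches the graph of $z_2$; for small enough tilts the contact happens inside $G$ because $z_2 - z_1 = m > 0$ there while the boundary gap is $0$. This yields a set of positive Lebesgue measure contained in $\partial z_2(G) \setminus \partial z_1(G)$, so $\int_{\partial z_2(G)} R\,d\bp > \int_{\partial z_1(G)} R\,d\bp$, contradicting the hypothesis applied with $e = G$.

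I expect the main obstacle — and, judging by the remarks in the introduction about gaps in Bakelman's argument — the delicate point to be exactly the measurability and positive-measure claim for the tilted-plane construction, i.e. showing rigorously that $\partial z_2(\overline{G})$ contains a ball of gradients not realized by $z_1$ on $G$, and that the touching points genuinely stay in the \emph{open} set $G$ rather than escaping to $\partial G$ where $z_1 = z_2$. This is where Lemma 2.2 and Lemma 2.3 (announced in the introduction) must be brought in: one to handle the structure of the normal image near the contact set, the other presumably to control the behavior of $\partial z_1$, $\partial z_2$ on the boundary of $G$ and to justify that the relevant exceptional sets have measure zero. A secondary technicality is the limiting argument in $\varepsilon \to 0$, which is needed if the perturbation shifts $G$; since $R$ is only locally integrable this requires knowing a priori that the gradient images stay in a fixed bounded set, which follows from the interior Lipschitz bounds on convex functions on compact subsets of $\Omega$.
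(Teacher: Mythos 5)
Your overall plan (contradiction, localize, strict measure inequality) is in the right spirit, but there is a fatal sign error in the monotonicity step that propagates to the final contradiction, so as written the argument does not close.

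The inclusion you invoke is backwards. With $G = \{z_2 > z_1\}$, i.e.\ $z_1 < z_2$ in $G$ and $z_1 = z_2$ on $\partial G$, the standard normal-map monotonicity gives $\partial z_2(G) \subset \partial z_1(G)$, not $\partial z_1(G) \subset \partial z_2(G)$: a supporting plane to $z_2$ at an interior point sits above $z_1$ there and below $z_1$ on $\partial G$, so lowering it by $\max_{\overline G}(\text{plane} - z_1)$ produces a supporting plane to $z_1$ at an interior point; the reverse lowering argument for $z_1$ fails because the touching point of the raised plane against $z_2$ may escape to $\partial G$. This matters, because with the correct inclusion the inequality you get for free is $\int_{\partial z_2(G)} R \le \int_{\partial z_1(G)} R$, and the hypothesis of the theorem gives $\int_{\partial z_1(G)} R \le \int_{\partial z_2(G)} R$ — so the two are \emph{equal}, and a contradiction requires making $\int_{\partial z_1(G)} R > \int_{\partial z_2(G)} R$ strict, i.e.\ a positive-measure set in $\partial z_1(G)\setminus\partial z_2(G)$. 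You instead aim to produce a positive-measure set in $\partial z_2(G)\setminus\partial z_1(G)$ and conclude $\int_{\partial z_2(G)} R > \int_{\partial z_1(G)} R$; but that is fully consistent with the hypothesis $\int_{\partial z_1(e)} R \le \int_{\partial z_2(e)} R$ and contradicts nothing. So both the inclusion and the target discrepancy set are on the wrong side.

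The paper's route also differs in mechanism, and this difference is not incidental. Rather than tilting planes at the \emph{interior} maximum of $z_2-z_1$, the paper works at the \emph{boundary} of a sublevel-type region $Q^{(1)} := \{z_1 < z_2 - \epsilon_0/2\}$, picks a point $\bx_0 \in \partial \overline{Q^{(1)}}$ at which $z_2$ is differentiable (so $\partial z_2(\bx_0)$ is a single gradient, which greatly simplifies the geometry), and Lemma~\ref{lemma_comparision_aux2} shows that if $\nabla z_2(\bx_0) \notin \partial z_1(\bx_0)$ then $\nabla z_2(\bx_0)$ lies on the topological boundary of the compact set $\partial z_2(\overline{Q^{(1)}})$ yet in the interior of $\partial z_1(\overline{Q^{(1)}})$, which forces the strict inequality $\int_{\partial z_1(\overline{Q^{(1)}})} R > \int_{\partial z_2(\overline{Q^{(1)}})} R$. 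The contrapositive then says $\nabla z_2 \in \partial z_1$ at every differentiability point of $\partial\overline{Q^{(1)}}$; a further sub-argument (the $Q^{(2)}$ construction) extends this to all interior differentiability points of $Q^{(1)}$, giving $\nabla(z_1 - z_2^{(1)}) = 0$ a.e., hence $z_1 = z_2^{(1)}$ on $\overline{Q^{(1)}}$ by boundary agreement — a contradiction. Crucially, the paper never needs a positive-measure discrepancy at the interior max point, never passes to a limit $\varepsilon \to 0$, and insists on using $\overline Q$ (closed images) rather than $Q$ precisely to sidestep the measurability pathologies you worry about — which is exactly the correction to Bakelman's Lemma 10.2 explained in the remark following Theorem~\ref{thm_comparison}.
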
{\bf Remark:} Theorem~\ref{thm_comparison} is important not only for showing  uniqueness of the solution 
but also for proving existence of the generalized solution with weak Dirichlet boundary condition
in Definition~\ref{def_weak_eqs} (see (\ref{monotone_decay})).  There are several important facts we need to point out:\

({\bf 1}) For the classical Monge-Ampere equations (where $R\equiv1$), the comparison principle can be proved by the Brunn-Minkowski inequality (see [4, Theorem 1.4.6]). However, for general $0<R\in L^1_{\mathrm{loc}}(\mathbb{R}^d)$, that inequality is not available . Thus the argument of the proof of comparison principle for classical case cannot be applied to the general case.
\

({\bf 2}) {Theorem~\ref{thm_comparison} is basically the same as \cite[Theorem~$10.1$]{Bakelman94}, 
and our proof is inspired by that of \cite[Theorem~$10.1$]{Bakelman94}.However, the proof of \cite[Lemma~$10.2$]{Bakelman94} lacks of some essential details.  In the following  we shall explain those gaps:}

Firstly, {one essential argument in the proof of \cite[Lemma~$10.2$]{Bakelman94} states as follows:  assume  that $z_{1}, z_{2}\in W^{+}(\Omega)$, $Q$ is an open set satisfying $\overline{Q}\subset \Omega$.
If there is $\bp\in\mathbb{R}^{d}$ such that 
$\bp$ is contained in the boundary of $\partial z_{2}(Q)$ , 
$\bp \in \text{Int}(\partial z_{1}(Q))$, and $\partial z_{2}(Q)\subset \partial z_{1}(Q)$, then 
\begin{align}
\label{critical_ineq1}
\int_{\partial z_{1}(Q)} R(\bp) d\bp > \int_{\partial z_{2}(Q)} R(\bp) d\bp.
\end{align}
Since $Q$ is open, $\partial z_{1}(Q)$ and $\partial z_{2}(Q)$ are Lebesgue 
measurable in $\mathbb{R}^{d}$, due to \cite[Property ($D$)]{Bakelman94}. 
If we let $\partial z_{1}(Q)$  be the unit open ball in $\mathbb{R}^{d}$, and 
$\partial z_{2}(Q)$ be $\partial z_{1}(Q)$ minus any $(d-1)$-dimensional subset $P^{\prime}$ 
satisfying $\overline{P^{\prime}} \subset \partial z_{1}(Q)$. For any  $\bp \in \partial P^{\prime}$ , then $\bp$ is contained in the boundary of $\partial z_{2}(Q)$ and 
$\bp \in \text{Int}(\partial z_{1}(Q))$. But it is easy to see that (\ref{critical_ineq1}) does not hold 
for this case.  On the other hand, due to \cite[Property ($B$)]{Bakelman94}, 
 $\partial z_{1}(\overline{Q})$ and $\partial z_{2}(\overline{Q})$ are closed. 
Thus, if there is $\bp\in\mathbb{R}^{d}$ such that 
$\bp$ is contained in the boundary of $\partial z_{2}(\overline{Q})$,
$\bp \in \text{Int}(\partial z_{1}(\overline{Q}))$, and $\partial z_{2}(\overline{Q})\subset \partial z_{1}(\overline{Q})$, then 
\begin{align}
\label{critical_ineq2}
\int_{\partial z_{1}(\overline{Q})} R(\bp) d\bp > \int_{\partial z_{2}(\overline{Q})} R(\bp) d\bp.
\end{align}
Thus in our strategy, to get Lemma~\ref{lemma_comparision_aux2}, a revised form of \cite[Lemma~$10.2$]{Bakelman94}, we shall prove  (\ref{critical_ineq2}) instead of (\ref{critical_ineq1}) . 

Secondly, {our Lemma~\ref{lemma_comparision_aux2} is significantly different from \cite[Lemma~$10.2$]{Bakelman94} 
that we only consider a point $\bx_{0}\in\partial \overline{Q}$ such that $z_{2}$ is differentiable at $\bx_{0}$.  
Then $\partial z_{2}(\bx_{0})$ is just a single point in $\mathbb{R}^{d}$, which would simplify the analysis a lot. 
}
\vspace{10pt}

The proof of Theorem \ref{thm_comparison} would depend on the following two lemmas:

\begin{lemma}
\label{lemma_comparision_aux1}
Let $z\in W^{+}(\Omega)$ and $Q$ be an open 
subset of $\Omega$ with $\overline{Q}\subset \Omega$. We assume that $T$ is a hyperplane 
whose equation is 
\begin{align*}
z = z_{T} + \bp_{T}\cdot (\bx - \bx_{T}), 
\end{align*}
where $\bp_{T}, \bx_{T}\in \mathbb{R}^{d}$, and $z_{T}\in \mathbb{R}$. 
In addition, we suppose
\begin{subequations}
\label{ineqs_lemma_comparision_aux1}
\begin{align}
\label{ineq1_lemma_comparison_aux1}
& z(\bx) \geq z_{T}+\bp_{T}\cdot (\bx - \bx_{T}),\qquad \forall \bx \in \overline{Q},\\
\label{ineq2_lemma_comparison_aux1}
& \exists \bx_{0}\in \partial \overline{Q}\text{ such that } (\bx_{0}, z(\bx_{0}))\in T 
\text{ and } \bp_{T} \notin \partial z(\bx_{0}).
\end{align}
\end{subequations}
Then, $\bp_{T}\notin \partial z(\overline{Q})$. 
\end{lemma}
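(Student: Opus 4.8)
The plan is to argue by contradiction, using the elementary fact that two affine functions with the same gradient differ only by an additive constant, and then to pin down the sign of that constant from the two contact configurations encoded in \eqref{ineq1_lemma_comparison_aux1}--\eqref{ineq2_lemma_comparison_aux1}.

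First I would name the affine function defining $T$, namely $\ell(\bx)\defn z_{T}+\bp_{T}\cdot(\bx-\bx_{T})$. Hypothesis \eqref{ineq1_lemma_comparison_aux1} says $\ell\le z$ on $\overline{Q}$, while \eqref{ineq2_lemma_comparison_aux1} provides a contact point $\bx_{0}\in\partial\overline{Q}\subset\Omega$ with $\ell(\bx_{0})=z(\bx_{0})$ and with $\bp_{T}\notin\partial z(\bx_{0})$. Now suppose, for contradiction, that $\bp_{T}\in\partial z(\overline{Q})$, i.e. $\bp_{T}\in\partial z(\bx_{1})$ for some $\bx_{1}\in\overline{Q}$. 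By the definition of the sub-differential, the affine function $m(\bx)\defn z(\bx_{1})+\bp_{T}\cdot(\bx-\bx_{1})$ then satisfies $m\le z$ on all of $\Omega$, with $m(\bx_{1})=z(\bx_{1})$. (If $\bx_{0}=\bx_{1}$ the contradiction is already immediate, since then $\bp_{T}\in\partial z(\bx_{1})=\partial z(\bx_{0})$, contradicting \eqref{ineq2_lemma_comparison_aux1}; so one may assume $\bx_{0}\neq\bx_{1}$, though this is not actually needed.)

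The key step is that $m$ and $\ell$ share the gradient $\bp_{T}$, so $m-\ell\equiv c$ for a constant $c\in\mathbb{R}$. Evaluating at $\bx_{0}$: since $\bx_{0}\in\overline{Q}\subset\Omega$ and $m\le z$ on $\Omega$, we get $c=m(\bx_{0})-\ell(\bx_{0})=m(\bx_{0})-z(\bx_{0})\le0$. Evaluating at $\bx_{1}\in\overline{Q}$: since $\ell\le z$ on $\overline{Q}$, we get $c=m(\bx_{1})-\ell(\bx_{1})=z(\bx_{1})-\ell(\bx_{1})\ge0$. Hence $c=0$ and $m\equiv\ell$. Therefore, for every $\by\in\Omega$,
\[
z(\by)\ \ge\ m(\by)\ =\ \ell(\by)\ =\ z(\bx_{0})+\bp_{T}\cdot(\by-\bx_{0}),
\]
where the last equality uses $\ell(\bx_{0})=z(\bx_{0})$ together with the fact that $\ell$ is affine with gradient $\bp_{T}$. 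But this inequality says precisely that $\bp_{T}\in\partial z(\bx_{0})$, contradicting \eqref{ineq2_lemma_comparison_aux1}. Hence $\bp_{T}\notin\partial z(\overline{Q})$, as claimed.

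I do not expect a genuine obstacle here: the argument is essentially bookkeeping with contact points and affine functions, and requires nothing beyond the definition of $\partial z$. The only place warranting a touch of care is the identification of the shifted affine function $\ell$ with the sub-gradient plane $m$, and the rewriting $\ell(\by)=z(\bx_{0})+\bp_{T}\cdot(\by-\bx_{0})$ at the end — but both are immediate once one observes that $\ell$ and $m$ have equal gradient and that $\ell$ touches $z$ at $\bx_{0}$.
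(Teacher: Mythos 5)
Your proof is correct and takes essentially the same approach as the paper: a contradiction argument using the supporting-hyperplane inequality coming from a putative $\bx_{1}\in\overline{Q}$ with $\bp_{T}\in\partial z(\bx_{1})$, combined with hypotheses \eqref{ineq1_lemma_comparison_aux1}--\eqref{ineq2_lemma_comparison_aux1}. The only stylistic difference is that you pin down the constant $c$ to conclude $m\equiv\ell$ and hence $\bp_{T}\in\partial z(\bx_{0})$ directly, whereas the paper unpacks $\bp_{T}\notin\partial z(\bx_{0})$ into a witness point and chains the resulting inequalities to arrive at $z(\bx_{0})>z_{T}+\bp_{T}\cdot(\bx_{0}-\bx_{T})$, contradicting $(\bx_{0},z(\bx_{0}))\in T$; the underlying manipulations are the same.
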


\begin{proof}
We prove it by contradiction. Assume $\bp_{T}\in \partial z (\overline{Q})$, then 
$\exists \ \bx_{Q}\in \overline{Q}$ such that it holds: 
\begin{align*}
z(\bx) \geq z(\bx_{Q}) + \bp_{T}\cdot (\bx - \bx_{Q}),\quad \forall \bx \in \Omega. 
\end{align*}
By (2.3b), we know that 
$\exists\  \bx_{1}\in\Omega$ such that $$z(\bx_{1}) < z(\bx_{0})+\bp_{T}\cdot (\bx_{1} - \bx_{0}).$$
Combining the two estimates above, we infer
\begin{align*}
 z(\bx_{Q}) + \bp_{T}\cdot (\bx_{1} - \bx_{Q})\leq z(\bx_{1}) < z(\bx_{0}) + \bp_{T}\cdot (\bx_{1} - \bx_{0})\end{align*}which indicates that
$$z(\bx_{0}) > z(\bx_{Q}) + \bp_{T}\cdot (\bx_{0} - \bx_{Q}).$$
Apply (2.3a) to the latest inequality, we arrive at $  z(\bx_{0}) >z_{T}+\bp_{T}\cdot (\bx_{0} - \bx_{T})$,
which contradicts with the fact: $(\bx_0,  z(\bx_0)) \in T$ from (\ref{ineq2_lemma_comparison_aux1}).
\end{proof}

The following Lemma 2.3 is a revised version of \cite[Lemma $10.2$]{Bakelman94}:
\begin{lemma}
\label{lemma_comparision_aux2}
Let $z_{1},z_{2}\in W^{+}(\Omega)$ and $Q$ be any open subset of $\Omega$ such that 
the following conditions hold:
$
\overline{Q}\subset \Omega,  z_{1}<z_{2} \text{ in } Q  \ \text{and} \  z_{1}= z_{2}\text{ on }\partial Q. 
$
Assume that for any $\bx_{Q}\in\partial \overline{Q}$,  $\exists r>0$ such that 
$\overline{B_{r}(\bx_{Q})} \subset \Omega$ and $z_{1}\geq z_{2}$ in $B_{r}(\bx_{Q})\backslash \overline{Q}$. 

If there exists some point $\bx_{0}\in \partial \overline{Q}$ such that $z_{2}$ is differentiable at $\bx_{0}$ 
and $\nabla z_{2}(\bx_{0})\notin \partial z_{1}(\bx_{0})$, then it holds:
\begin{align*}
\int_{\partial z_{1}(\overline{Q})}R(\bp) d\bp > \int_{\partial z_{2}(\overline{Q})}R(\bp) d\bp.
\end{align*}
\end{lemma}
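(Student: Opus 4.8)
The plan is to exploit the differentiability of $z_2$ at the boundary point $\bx_0$ to produce a small open "cap" region near $\bx_0$ whose sub-differential image under $z_1$ contributes genuinely new mass, while showing that $\partial z_2(\overline Q)$ gains nothing there. Concretely, since $z_2$ is differentiable at $\bx_0$, the set $\partial z_2(\bx_0)$ is the single point $\bp_0 \defn \nabla z_2(\bx_0)$, and the hypothesis gives $\bp_0 \notin \partial z_1(\bx_0)$. The first step is to compare $\partial z_1(\overline Q)$ and $\partial z_2(\overline Q)$ on the common interior: using the standard inclusion-type facts from Bakelman (Properties $B$ and $D$) together with the assumptions $z_1 < z_2$ in $Q$, $z_1 = z_2$ on $\partial Q$, and $z_1 \ge z_2$ in a punctured neighbourhood $B_r(\bx_Q)\setminus\overline Q$ of every boundary point, I would argue that $\partial z_2(\overline Q) \subset \partial z_1(\overline Q)$. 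The geometric idea: any supporting hyperplane to $z_2$ at a point of $\overline Q$ with slope $\bp$ can be lowered and translated to become a supporting hyperplane to $z_1$ at some point of $\overline Q$, because $z_1$ lies below $z_2$ inside $Q$ and the local condition near $\partial Q$ prevents the contact point from escaping $\overline Q$. This is where Lemma~\ref{lemma_comparision_aux1} enters: it guarantees that a supporting hyperplane touching $z_1$ only at a boundary point $\bx_0$ where $\bp_T\notin\partial z_1(\bx_0)$ cannot have its slope realized anywhere on $\overline Q$, which is exactly the mechanism that lets me separate $\bp_0$ from $\partial z_1(\overline Q)$ versus $\partial z_2(\overline Q)$.

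Given the inclusion $\partial z_2(\overline Q)\subset\partial z_1(\overline Q)$, both sets being closed (Property $B$) and measurable, it suffices to exhibit a set of positive Lebesgue measure contained in $\partial z_1(\overline Q)\setminus\partial z_2(\overline Q)$; since $R>0$ everywhere and is locally integrable, positive measure of the difference forces the strict inequality of integrals. The heart of the argument is therefore to show that $\bp_0 \in \mathrm{Int}(\partial z_1(\overline Q))$ while $\bp_0$ lies on the boundary of $\partial z_2(\overline Q)$; then a whole ball around $\bp_0$ lies in $\partial z_1(\overline Q)$, but $\partial z_2(\overline Q)$, being closed and not containing a neighbourhood of $\bp_0$, misses a set of positive measure near $\bp_0$. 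To see $\bp_0\in\mathrm{Int}(\partial z_1(\overline Q))$: consider the hyperplane through $(\bx_0,z_2(\bx_0))$ with slope $\bp_0$; since $z_1=z_2$ at $\bx_0$ and $z_1<z_2$ just inside $Q$ while $z_1\ge z_2$ just outside (hence $z_1$ dips strictly below this hyperplane on the $Q$-side and stays at or above it on the complementary side near $\bx_0$), the function $z_1$ achieves a strict local minimum of $z_1(\bx)-\bp_0\cdot\bx$ at an interior point of $\overline Q$, not at $\bx_0$; small perturbations $\bp_0+\bp'$ of the slope still attain their minimum over $\overline Q$ at an interior point, so all nearby slopes lie in $\partial z_1(\overline Q)$, giving the interior statement. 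Meanwhile $\bp_0$ is on the boundary of $\partial z_2(\overline Q)$ because $z_2$ is differentiable at $\bx_0$: slopes slightly "tilted toward the outside of $Q$" are not subgradients of $z_2$ at any point of $\overline Q$, since the only point of $\overline Q$ whose subdifferential could contain something near $\bp_0$ is $\bx_0$ itself, and there $\partial z_2(\bx_0)=\{\bp_0\}$ exactly.

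The main obstacle I anticipate is the careful verification of the inclusion $\partial z_2(\overline Q)\subset\partial z_1(\overline Q)$ and, dually, the claim that $\bp_0$ is a boundary point of $\partial z_2(\overline Q)$ rather than an interior one. Both require a delicate interplay between the global convexity inequalities defining subgradients over all of $\Omega$ and the purely local ordering information $z_1\ge z_2$ on $B_r(\bx_Q)\setminus\overline Q$: one must rule out that some faraway behaviour of $z_2$ outside $\overline Q$ (but still inside $\Omega$) secretly supplies a supporting plane with slope near $\bp_0$ touching $\overline Q$. Lemma~\ref{lemma_comparision_aux1} is precisely the tool for localizing this — it converts "touches $T$ only at $\bx_0\in\partial\overline Q$ with $\bp_T\notin\partial z(\bx_0)$" into "$\bp_T\notin\partial z(\overline Q)$" — so the real work is setting up, for each relevant slope, the hyperplane and the point $\bx_0$ so that its hypotheses (2.3a)–(2.3b) are met for $z_2$ but fail for $z_1$. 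The differentiability reduction (only one contact slope $\bp_0$ at $\bx_0$) is what makes this bookkeeping tractable, and I would lean on it heavily rather than trying to handle a general subdifferential set at the boundary point as in the original Bakelman argument.
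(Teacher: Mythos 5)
Your plan has exactly the same three intermediate claims as the paper's proof---(i) $\partial z_{2}(\overline{Q})\subset\partial z_{1}(\overline{Q})$, (ii) $\bp_{0}:=\nabla z_{2}(\bx_{0})\in\mathrm{Int}(\partial z_{1}(Q))$, (iii) $\bp_{0}$ lies on the topological boundary of $\partial z_{2}(\overline{Q})$---and you correctly identify Lemma~\ref{lemma_comparision_aux1} as the localizing device. So the strategy is the paper's. But two steps are not actually carried, and the second is the heart of the lemma.

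First, a smaller issue: you argue that ``$z_{1}<z_{2}$ just inside $Q$'' together with $z_{1}=z_{2}$ at $\bx_{0}$ ``hence'' forces $z_{1}$ to dip strictly below the plane $L_{\bp_{0}}$ on the $Q$-side. That inference is false as stated: $z_{1}<z_{2}$ and $z_{2}\geq L_{\bp_{0}}$ together give nothing about the sign of $z_{1}-L_{\bp_{0}}$. What forces the dip is the hypothesis $\bp_{0}\notin\partial z_{1}(\bx_{0})$ (which you have, but do not invoke at this point): if $z_{1}\geq L_{\bp_{0}}$ held on a $Q$-side neighbourhood of $\bx_{0}$, then $L_{\bp_{0}}$ would locally support the convex function $z_{1}$ at $\bx_{0}$, hence globally, giving $\bp_{0}\in\partial z_{1}(\bx_{0})$. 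The paper's contact point $\bx_{1}\in Q$ is obtained from exactly this contradiction, localized to $Q$ using $z_{1}\geq L_{\bp_{0}}$ on $\partial Q\cup(B_{r}(\bx_{0})\setminus\overline{Q})$.

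The genuine gap is in step (iii). You assert that ``the only point of $\overline{Q}$ whose subdifferential could contain something near $\bp_{0}$ is $\bx_{0}$ itself,'' and conclude that tilted slopes are not in $\partial z_{2}(\overline{Q})$. This is not true in general (subdifferentials of a convex function at distinct points may overlap, and nearby slopes can arise from faraway points of $\overline{Q}$), and it is not the mechanism that actually proves the boundary claim. The paper's device is to take $\bp_{1}\in\partial z_{1}(\bx_{0})$ and interpolate $\bp_{\lambda}=(1-\lambda)\bp_{0}+\lambda\bp_{1}$, $0<\lambda<1$. Then $z_{2}\geq T_{\lambda}$ on $\overline{Q}$ (because $z_{2}\geq T_{2}$ everywhere by $\bp_{0}\in\partial z_{2}(\bx_{0})$, and $z_{2}\geq z_{1}\geq T_{1}$ on $\overline{Q}$ since $z_{2}\geq z_{1}$ there and $\bp_{1}\in\partial z_{1}(\bx_{0})$); moreover $\bp_{\lambda}\neq\bp_{0}=\nabla z_{2}(\bx_{0})$ since $\bp_{1}\neq\bp_{0}$, so $\bp_{\lambda}\notin\partial z_{2}(\bx_{0})$ by differentiability. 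Lemma~\ref{lemma_comparision_aux1} applied to $z_{2}$, $T_{\lambda}$, and $\bx_{0}$ then yields $\bp_{\lambda}\notin\partial z_{2}(\overline{Q})$, and letting $\lambda\to 0^{+}$ shows $\bp_{0}$ is a boundary point of the compact set $\partial z_{2}(\overline{Q})$. Without this (or an equivalent explicit construction of a sequence of excluded slopes converging to $\bp_{0}$), your argument does not close; the conclusion depends on it, since $\bp_{0}$ being an interior point of $\partial z_{2}(\overline{Q})$ would make the two integrals potentially equal.
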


\begin{proof}
By the assumptions it is easy to see that $\partial z_{2}(Q)\subset \partial z_{1}(Q)$.  

For any $\bx_{Q}\in \partial \overline{Q}$, let $T^{\prime}$ be a supporting hyperplane of 
the graph of $z_{2}$ at $(\bx_{Q}, z_{2}(\bx_{Q}))$. Then the equation of $T^{\prime}$ is 
$y = z_{2}(\bx_{Q}) + \bp_{T^{\prime}}\cdot (\bx - \bx_{Q})$ with  some $\bp_{T^{\prime}}\in \partial z_{2}(\bx_{Q})$. 
Thus $\exists r>0$ such that the following holds:
\begin{align*}
z_{1}(\bx) \geq z_{2}(\bx) \geq L_{T^{\prime}}(\bx) := z_{2}(\bx_{Q}) + \bp_{T^{\prime}}\cdot (\bx - \bx_{Q}),
\qquad \forall \bx \in B_{r}(\bx_{Q})\backslash \overline{Q}. 
\end{align*}

If $\bp_{T^{\prime}}\notin \partial z_{1}(\bx_{Q})$, then by the above inequality and the condition that 
$z_{1} = z_{2}$ on $\partial Q$, there exists $\bx_{1}\in Q$ such that $z_{1}(\bx_{1})< L_{T^{\prime}}(\bx_{1})$. 
Thus there exists a new hyperplane parallel to $T^{\prime}$  such that it supports the graph of $z_{1}$ at 
some point $(\bx_{2}, z_{1}(\bx_{2}))$ with $\bx_{2}\in Q$. Hence $\bp_{T^{\prime}}\in \partial z_{1}(Q)$. 
Furthermore we claim that 
\begin{align*}\bp_{T^{\prime}}\in \text{Int}(\partial z_{1}(Q)).
\end{align*}
In fact, if we choose $\epsilon > 0$ and $\bp_{\epsilon}\in B_{\epsilon}(\bp_{T^{\prime}})\subset \mathbb{R}^{d}$, then 
$
L_{T^{\prime}}^{\epsilon} := z_{1}(\bx_{Q}) + \bp_{\epsilon}\cdot (\bx - \bx_{Q})
$
satisfies the following:
\begin{align*}
L_{T^{\prime}}^{\epsilon}(\bx) \geq L_{T^{\prime}}(\bx) - \epsilon\big(\sup_{\bx,\bx^{\prime}\in Q}\vert \bx 
- \bx^{\prime} \vert\big), \quad \forall\bx\in Q.
\end{align*}
Then
$L_{T^{\prime}}^{\epsilon} (\bx_{1}) > z_{1}(\bx_{1})$  for $\epsilon \text{ small enough}.$
Thus there is a supporting hyperplane of the graph of $z_{1}$ at some point $(\bx_{\epsilon}, z_{1}(\bx_{\epsilon}))$ 
with $\bx_{\epsilon}\in Q$, and its equation is 
$
y = z_{1}(\bx_{\epsilon}) + \bp_{\epsilon}\cdot (\bx - \bx_{\epsilon}). 
$
Therefore  $\bp_{\epsilon}\in \partial z_{1}(Q)$ for $ \epsilon \text{ small enough},$ which concludes the claim.

Hence for any $\bx_{Q}\in \partial \overline{Q}$, it holds:
$\partial z_{2}(\bx_{Q})\backslash \partial z_{1}(\bx_{Q}) \subset \text{Int}(\partial z_{1}(Q))$ and 
$\partial z_{2}(\overline{Q}) \subset \partial z_{1}(\overline{Q})$ since $\partial z_{2}(Q)\subset \partial z_{1}(Q)$.

In the following, let $T_{i}$ be a supporting hyperplane of the graph of $z_{i}$ at $(\bx_{0}, z_{i}(\bx_{0}))$ 
 and the equations of $T_{i}$ be
\begin{align*}
& \quad z = z_{i}(\bx_0)+\bp_{i}\cdot (\bx - \bx_{0}),
\end{align*}
for $i=1,2$, $\bp_{1}\in \partial z_{1}(\bx_{0})$ and $\bp_{2}=\nabla z_{2}(\bx_{0})$. 
For any $0<\lambda<1$, let $T_{\lambda}$ be a hyperplane given by the equation:
$
z = z_{1}(\bx_{0}) + \bp_{\lambda}\cdot (\bx - \bx_{0}), 
$
where $\bp_{\lambda}:=(1-\lambda)\bp_{2}+\lambda\bp_{1}$. 
Obviously, $ (\bx_{0},z_{1}(\bx_{0})) = (\bx_{0}, z_{2}(\bx_{0}))\in T_{\lambda}, $ and 
$ z_{2}(\bx) \geq z_{2}(\bx_{0}) + \bp_{\lambda}\cdot (\bx - \bx_{0}),$  $ \forall \bx \in \overline{Q}$  and  any $  0<\lambda<1.
$
Since $z_{2}$ is differentiable at $\bx_{0}$, then we claim that $$\bp_{\lambda} \notin \partial z_{2}(\bx_{0}), \ \forall 0<\lambda < 1.$$ In fact, if not, then $\exists \lambda_0\in (0, 1)$ such that $\bp_{\lambda_0} = \nabla z_{2}(\bx_{0}),$ which implies that $\bp_1=\bp_2$. Then we arrive at a contradiction. 

Since $\displaystyle\bp_{2} = \lim_{\lambda\rightarrow 0+}\bp_{\lambda}$ and $\bp_{\lambda} \notin \partial z_{2}(\bx_{0}), \ \forall 0<\lambda < 1$, then $\bp_{2}$ belongs to 
the boundary of $\partial z_{2}(\overline{Q})$. 
Applying the argument before for $\bp_{T^{\prime}}$ to the fact: $\bp_{2} = \nabla z_{2}(\bx_{0})\notin \partial z_{1}(\bx_{0})$, 
we get $\bp_{2}\in \text{Int}(\partial z_{1}(Q))$. 
By \cite[Property B in Section $9.4$]{Bakelman94}, $\partial z_{2}(\overline{Q})$ is compact in $\mathbb{R}^{d}$. 
Then we have 
\begin{align*}
\int_{\partial z_{1}(\overline{Q})}R(\bp) d\bp > \int_{\partial z_{2}(\overline{Q})}R(\bp) d\bp.
\end{align*}
\end{proof}

Finally we go to the proof of Theorem \ref{thm_comparison}:

{\bf Proof of Theorem~\ref{thm_comparison}}.
We prove it by contradiction. Assume $\{\bx\in\Omega: z_{1}(\bx)<z_{2}(\bx)\}\neq \emptyset$ and  $Q$ is a
connected component of $\{\bx\in\Omega: z_{1}(\bx)<z_{2}(\bx)\}$. We define
$
\epsilon_{0}  := \sup_{\bx\in Q}( z_{2}(\bx) - z_{1}(\bx) ),$  $\ z_{2}^{(1)} \  := \ z_{2}(\bx) - \epsilon_{0}/2$ and
$Q^{(1)}  := \{\bx\in Q: z_{1}(\bx) < z_{2}^{(1)}(\bx) \}.
$
Obviously, $\epsilon_0 >0$ and $Q^{(1)}\neq \emptyset$. Without the loss of generality, we assume that $Q^{(1)}$ is connected. 
From the assumptions on $z_1$ and $z_2$, 
we know that $\overline{Q^{(1)}} \subset \Omega$. Then by Lemma~\ref{lemma_comparision_aux2},  
for any $\bx\in \partial \overline{Q^{(1)}}$ such that $z_{2}^{(1)}$ is differentiable at $\bx$,  it holds:
$\nabla z_{2}(\bx) = \nabla z_{2}^{(1)}(\bx)\in \partial z_{1}(\bx). $

In the following, we claim,  any $\bx_{0}\in Q^{(1)}$ such that $z_{2}$ is differentiable at $\bx_{0}$,  that 
\begin{align}
\label{claim_thm_comparison}
\nabla z_{2}(\bx_{0}) \in \partial z_{1}(\bx_{0}). 
\end{align}
If (\ref{claim_thm_comparison}) is not true, we define 
$
\epsilon_{1}  := z_{2}^{(1)}(\bx_{0}) - z_{1}(\bx_{0}),\
z_{2}^{2}  := z_{2}^{(1)} - \epsilon_{1},\
Q^{(2)}  := \{ \bx\in Q^{(1)}: z_{1}(\bx) < z_{2}^{(2)}(\bx) \}.
$
Obviously, $z_{2}^{(1)}(\bx_{0}) > z_{2}^{(2)}(\bx_{0}) = z_{1}(\bx_{0})$ and we infer

\begin{itemize}

\item[(1)] $Q^{(2)} \neq \emptyset$. In fact, if not, then $z_{1}\geq z_{2}^{(2)}$ in $Q^{(1)}$. 
Since $z_{1}(\bx_{0}) = z_{2}^{(2)}(\bx_{0})$, then 
$
\nabla z_{2}(\bx_{0}) = \nabla z_{2}^{(2)}(\bx_{0})\in \partial z_{1}(\bx_{0}),
$
which contradicts with our assumption.

\item[(2)] $\bx_{0}\in \overline{Q^{(2)}}$.  If not, then $\exists r_{1}>0$ such that 
$
\overline{B_{r_{1}}(\bx_{0})} \subset Q^{(1)}\text{  and  } B_{r_{1}}(\bx_{0})\cap Q^{(2)} = \emptyset. 
$
Hence $z_{1} \geq z_{2}^{(2)}$ on $\overline{B_{r_{1}}(\bx_{0})}$. Since $z_{1}(\bx_{0}) = z_{2}^{2}(\bx_{0})$,  then it holds:
$\nabla z_{2}(\bx_{0})\in \partial z_{1}(\bx_{0})$, which is a contradiction. 

\item[(3)] $\bx_{0}\in \partial \overline{Q^{(2)}}$. 
If not, then by (2), we know that $\bx_{0}\in \text{Int}(\overline{Q^{(2)}})$. 
That is, $\exists r_{2}>0$ such that 
$
B_{r_{2}}(\bx_{0})\subset \text{Int}(\overline{Q^{(2)}})\text{  and  } \overline{B_{r_{2}}(\bx_{0})} \subset Q^{(1)}.
$
Hence $z_{1} \leq z_{2}^{(2)}$ in $B_{r_{2}}(\bx_{0})$.  Since $z_{1}(\bx_{0}) = z_{2}^{2}(\bx_{0})$, we get
$\partial z_{1}(\bx_{0}) = \{ \nabla z_{2}(\bx_{0}) \}$, which is contradiction. 
\end{itemize}
Since $\bx_{0}\in \partial \overline{Q^{(2)}}$,  there exists $r_{3}>0$ such that 
$
\overline{B_{r_{3}}(\bx_{0})} \subset Q^{(1)}$ and  $z_{1} \geq z_{2}^{(2)} \text{ in } B_{r_{3}}(\bx_{0})\backslash Q^{(2)}.
$
By Lemma~\ref{lemma_comparision_aux2}, it holds:
\begin{align*}
\int_{\partial z_{1}(\overline{Q^{(2)}})}R(\bp) d\bp > \int_{\partial z_{2}(\overline{Q^{(2)}})}R(\bp) d\bp,
\end{align*}
which arrives at a contradiction.

 Since $z_{1}, z_{2}, z_{2}^{(1)}, z_{2}^{(2)}\in W^{+}(\Omega)$ , then they are all differentiable almost everywhere in $\Omega$. By  (2.4),  we know that
$
\nabla (z_{1} - z_{2}^{(1)})(\bx) = 0\quad  \text{for a. e.} \ \bx \in Q^{(1)},$  which, together with \cite[Corollary~$2.1.9$]{Ziemer89} and the fact: $z_{1} = z_{2}^{(1)}$ on $\partial Q^{(1)}$ to (2.5) , implies that  
$
z_{1} = z_{2}^{(1)}\text{ in } \overline{Q^{(1)}}.
$
This is a contraction with our assumption.    \hspace{280pt}$\Box$

\vspace{35pt}

\section{The border of a convex function}
In this part, the boundary behaviour of convex functions would be considered. Firstly, we give the definition of border of convex functions on convex domains, which  was firstly introduced by  Bakelman\cite[Section~$10.4$]{Bakelman94}.

\begin{definition}
\label{def_border}
(The border of a convex function) 
For any $v\in W^{+}(\Omega)$, we define the border of $v$ to be a function on $\partial\Omega$ by  
\begin{align*}
b_{v}(\bx_{0}) = \liminf_{\bx\rightarrow \bx_{0}}v(\bx),\quad \forall \bx_{0}\in \partial\Omega.
\end{align*}
\end{definition}
The following Lemma~\ref{lemma_continuous_border} shows that if $b_{v}\in C^{0}(\partial\Omega)$, 
then $v$ can be extended continuously to $\overline{\Omega}$ such that $v|_{\partial\Omega} = b_{v}$.  

\begin{lemma}
\label{lemma_continuous_border}
Let $v\in W^{+}(\Omega)$ and $b_{v}$ be the border of $v$ as in Definition~\ref{def_border}. 
For any $\bx_{0}\in\partial\Omega$, if $b_{v}$ is continuous at $\bx_{0}$, then we have
\begin{align*}
b_{v}(\bx_{0}) = \lim_{\bx\rightarrow \bx_{0}} v(\bx).
\end{align*}
\end{lemma}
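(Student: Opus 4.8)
The plan is to unpack the definitions and show that the two one-sided limits agree, namely that $\limsup_{\bx\to\bx_0} v(\bx) \le b_v(\bx_0) := \liminf_{\bx\to\bx_0} v(\bx)$. The reverse inequality is trivial, so everything reduces to this upper bound. First I would fix $\bx_0 \in \partial\Omega$ and, for contradiction, suppose there is a sequence $\bx_k \to \bx_0$ with $\bx_k \in \Omega$ and $v(\bx_k) \to \ell$ for some $\ell > b_v(\bx_0)$ (possibly $\ell = +\infty$). Pick a value $m$ strictly between $b_v(\bx_0)$ and $\ell$; then eventually $v(\bx_k) > m$.

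The heart of the argument is to exploit convexity together with the continuity of $b_v$ at $\bx_0$ to produce a sequence along which $v$ stays well below $m$ near $\bx_0$, contradicting the first sequence. Since $b_v$ is continuous at $\bx_0$, for boundary points $\by$ close to $\bx_0$ we have $b_v(\by) < m$, so by definition of $\liminf$ there are interior points $\by'$ near each such $\by$ with $v(\by') < m$. The key step is then to connect these ``low'' interior points to the ``high'' points $\bx_k$ by line segments and use that $v$ is convex along each segment: along the chord from a low point $\by'$ (with $v < m$) to a high point $\bx_k$ (with $v > m$), $v$ lies below the chord's affine interpolant, which forces $v$ to blow up (or at least exceed $m$ by a controlled amount) at a suitable endpoint as we push $\by'$ toward the boundary. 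More precisely, I would choose the geometry so that $\bx_0$ itself lies on (or just beyond) the segment from $\by'$ through $\bx_k$ extended; convexity of $v$ on $\Omega$ then gives, for a point $\bz$ on the segment between $\by'$ and $\bx_k$,
\[
v(\bz) \le (1-t) v(\by') + t\, v(\bx_k),
\]
and running this the other way, with $\bx_k$ between $\by'$ and $\bz$, yields a linear lower bound on $v$ near the boundary that, combined with $v(\bx_k) > m$ and $v(\by') < m$, contradicts $b_v(\by') < m$ once $\by'$ is close enough to a boundary point near $\bx_0$. The convexity of $\Omega$ is what guarantees these segments stay inside $\Omega$.

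I expect the main obstacle to be handling the geometry cleanly: arranging the low points, the high points, and $\bx_0$ in a configuration where a single convexity inequality along a segment does the job, while keeping all the relevant points inside the convex open set $\Omega$ and controlling the interpolation parameter $t$ so it stays bounded away from $0$ and $1$. A secondary technical point is the case $\ell = +\infty$, but this only makes the contradiction easier. I would also need to be a little careful that the ``low'' interior point $\by'$ can be taken with $v(\by')$ close to $b_v(\by) < m$ and simultaneously close to the boundary, which is exactly the content of the $\liminf$ definition of $b_v$; combining this with continuity of $b_v$ at $\bx_0$ lets me localize near $\bx_0$. Once the geometric setup is fixed, the conclusion follows from a single application of the convexity inequality and a limiting argument letting $\by' \to$ boundary.
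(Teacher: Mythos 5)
Your general strategy (reduce to $\limsup \le \liminf$, exploit continuity of $b_v$ to manufacture ``low'' interior points near the boundary near $\bx_0$, then use convexity to squeeze the ``high'' points) is the right framework, and it matches the paper's overall design. However, the specific one-dimensional convexity argument you describe has a genuine gap at the point where you claim the contradiction.

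You arrange a segment with $\bx_k$ between the low point $\by'$ and a point $\bz$ heading toward the boundary, deduce from convexity the lower bound $v(\bz)\ge \frac{1}{t}\bigl(v(\bx_k)-(1-t)v(\by')\bigr)$, and conclude that this ``contradicts $b_v(\by')<m$.'' But $b_v$ at a boundary point is a $\liminf$ over \emph{all} interior approaches, so producing a sequence along a single segment on which $v$ stays large does \emph{not} contradict $b_v$ being small there: the $\liminf$ could perfectly well be attained along a different approach. In other words, your argument controls $v$ only on one ray, and a one-sided lower bound along one ray is compatible with a small $\liminf$. To get a genuine contradiction you must trap the high point \emph{inside} a convex combination of low points, so that convexity gives an \emph{upper} bound $v(\bx_k)\le\max_i v(\text{low points})<m$, which does flatly contradict $v(\bx_k)>m$. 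The paper does exactly this: it locates $d$ low points $\bx_1,\dots,\bx_d$ near the boundary spread around $\bx_0$ (so their projections to the tangent plane surround $\boldsymbol{0}$), picks one more low point $\bx_{d+1}$ near the boundary close to the projection of the high point $\bar\bx$, and shows $\bar\bx$ lies in the convex hull of $\{\bx_i\}_{i=1}^{d+1}$; then $v(\bar\bx)\le\max_i v(\bx_i)$ does the job. This step is intrinsically $d$-dimensional --- a single chord through $\bx_k$ and one low point $\by'$ cannot enclose $\bx_k$. A workable one-dimensional variant would be to choose the chord so that it exits $\Omega$ near $\bx_0$ on \emph{both} sides of $\bx_k$, i.e.\ a roughly tangential chord with low points at both ends sandwiching $\bx_k$; but that is a different geometry from the one you describe, and you would still need to check (as the paper does with its set $S$ and density claim) that such low points can actually be placed on, or arbitrarily near, the desired chord. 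As written, the proposal does not close the loop.
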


\begin{proof}
We choose an orthogonal coordinate of $\mathbb{R}^{d}$ such that in an open neighborhood of $\bx^{0}$, 
$\partial\Omega$ can be represented as 
\begin{align*}
(y^{1},\cdots, y^{d-1}, z(y^{1},\cdots, y^{d-1})) \text{  satisfying  } z(y^{1},\cdots, z^{d-1})\geq 0.
\end{align*} 
For simplicity,  $\bx^{0}$ is taken to be the origin and for any $\bx\in \mathbb{R}^{d}$, we denote by $(x^{1},\cdots, x^{d})$ its coordinate. 
We define  
$$\displaystyle
\tilde{b}_{v}(\bx) := \limsup_{\by\rightarrow \bx} v(\by), \forall \bx\in\partial\Omega.
$$
To prove Lemma 3.2, it is sufficient to show that  $\tilde{b}_{v}(\bx_{0}) = b_{v}(\bx_{0})$ and 
 we prove it by contradiction. Assume $\epsilon:= \tilde{b}_{v}(\bx_{0}) - b_{v}(\bx_{0})>0$.  
By the continuity of $b_{v}$  at $\bx_{0}$, there is $\delta_{1}>0$ such that the following holds:
\begin{align*}
\vert b_{v}(y^{1},\cdots,y^{d-1}, z(y^{1},\cdots, y^{d-1})) - b_{v}(\bx_{0})\vert < \epsilon/3,
\quad \forall (y^{1},\cdots, y^{d-1})\in B_{2\delta_{1}}(\boldsymbol{0})\subset \mathbb{R}^{d-1}. 
\end{align*} 
We define 
\begin{align*}
S&:=\{\bx\in \Omega: \  \vert (x^{1},\cdots, x^{d-1}) \vert< 2\delta_{1}\text{ and } \exists 
(y^{1},\cdots, y^{d-1})\in \overline{B_{\delta_{1}}(\boldsymbol{0})}\subset \mathbb{R}^{d-1}\\
 & \text{ such that it holds: } \vert b_{v}(y^{1},\cdots,y^{d-1}, z(y^{1},\cdots, y^{d-1})) - v(\bx)\vert
 < \epsilon/3\}.
\end{align*}
Then we claim that for any $\delta_{2}>0$ and any $(y^{1},\cdots, y^{d-1})\in \overline{B_{\delta_{1}}(\boldsymbol{0})}
\subset \mathbb{R}^{d-1}$, there exists $\bx \in S$ such that 
\begin{align}
\label{S_dense}
\vert (y^{1},\cdots, y^{d-1}, z(y^{1},\cdots, y^{d-1})) - \bx\vert < \delta_{2}.
\end{align}
In fact, if (\ref{S_dense}) is not true, then there exist $0 < \bar{\delta}_{2} \leq \delta_{1}$ and 
$(\bar{y}^{1},\cdots, \bar{y}^{d-1})\in \overline{B_{\delta_{1}}(\boldsymbol{0})}\subset \mathbb{R}^{d-1}$, 
such that for any $\bx\in S$, we infer
\begin{align*}
\vert (\bar{y}^{1},\cdots, \bar{y}^{d-1}, z(\bar{y}^{1},\cdots, \bar{y}^{d-1})) - \bx\vert \geq \bar{\delta}_{2}>0.
\end{align*}
By the definition of $S$, for any $\bx \in \Omega$ 
with  $\vert (\bar{y}^{1},\cdots, \bar{y}^{d-1}, z(\bar{y}^{1},\cdots, \bar{y}^{d-1})) - \bx\vert < \bar{\delta}_{2}$,  we know that $\bx \notin S$ and 
\begin{align*}
\vert b_{v}(\bar{y}^{1},\cdots,\bar{y}^{d-1}, z(\bar{y}^{1},\cdots, \bar{y}^{d-1})) - v(\bx)\vert \geq \epsilon/3 > 0,
\end{align*}
which contradicts with the definition of $b_{v}$. Thus the claim holds true.

It is easy to see that there exist $d$ points $\{(y_{i}^{1},\cdots, y_{i}^{d-1})\}_{i=1}^{d}$  such that $\vert (y_{i}^{1},\cdots, y_{i}^{d-1})\vert = \delta_{1} $  for all $1\leq i\leq d$  and
\begin{align*}
\sum_{i=1}^{d}d^{-1}(y_{i}^{1},\cdots, y_{i}^{d-1}) = \boldsymbol{0} \in \mathbb{R}^{d-1}. 
\end{align*}
By (\ref{S_dense}),  there exist a constant $0<\sigma<1$ and 
$\{\bx_{i}\}_{i=1}^d$ in $S$, such that 
\begin{align}
\label{general_points}
\overline{B_{\sigma \delta_{1}}(\boldsymbol{0})}\subset \mathbb{R}^{d-1} \text{ is contained in the 
convex hull of } \{(x_{i}^{1},\cdots, x_{i}^{d-1})\}_{i=1}^{d} \text{ in } \mathbb{R}^{d-1}. 
\end{align}
Let $T$ be the hyperplane in $\mathbb{R}^{d}$ passing through $\{\bx_{i}\}_{i=1}^{d}$. Then the equation of $T$ is 
$
x^{d} = a^{1}x^{1}+\cdots + a^{d-1}x^{d-1} + c.
$
Since $x_{i}^{d}>0$ for any $1\leq i\leq d$, then $c>0$. Thus from  the definition of $\tilde{b}_{v}$,  there is $\bar{\bx}\in \Omega$ such that 
\begin{subequations}
\begin{align}
\label{vertex_point1}
& (\bar{x}^{1},\cdots, \bar{x}^{d-1})\in B_{\frac{1}{2}\sigma\delta_{1}}(\boldsymbol{0})\subset \mathbb{R}^{d-1},\\
\label{vertex_point2}
& \vert \tilde{b}_{v}(\bx_{0}) - v(\bar{\bx}) \vert < \epsilon/3,\\
\label{vertex_point3}
& \bar{x}^{d} < a^{1}\bar{x}^{1} + \cdots + a^{d-1}\bar{x}^{d-1} + c. 
\end{align}
\end{subequations}
Here (\ref{vertex_point3}) holds true since  $\bar{\bx}$ can  be chosen  as close to $\bx_{0}=\boldsymbol{0}$ as we need. 
By (\ref{general_points}, \ref{vertex_point1}), there are $0< \mu_{i}<1$ for any $1\leq i \leq d$ such that $\mu_{1}+\cdots + \mu_{d} = 1$ and
\begin{align*}
& (\bar{x}^{1},\cdots, \bar{x}^{d-1}) = \sum_{i=1}^{d}\mu_{i} (x_{i}^{1},\cdots, x_{i}^{d-1}), \ \  \sum_{i=1}^{d}\mu_{i} (x_{i}^{1},\cdots, x_{i}^{d-1}, x_{i}^{d})\in T.
\end{align*}
 
By the fact that $z(\bar{x}^{1},\cdots, \bar{x}^{d-1})< \bar{x}^{d}$ and (\ref{vertex_point3}), 
there is $0< \lambda < 1$ such that 
$$\bar{x}^{d} = (1-\lambda)z(\bar{x}^{1},\cdots, \bar{x}^{d-1}) + \lambda \sum_{i=1}^{d}\mu_{i} x_{i}^{d}, $$
and $(\bar{x}^{1},\cdots, \bar{x}^{d-1}, z(\bar{x}^{1},\cdots, \bar{x}^{d-1})), \bx_{1},\cdots, \bx_{d} 
\text{ are in a general location in } \mathbb{R}^{d}. 
$
Hence we get
\begin{align*}
\bar{\bx} = (1-\lambda)(\bar{x}^{1},\cdots, \bar{x}^{d-1}, z(\bar{x}^{1},\cdots, \bar{x}^{d-1})) 
+ \lambda \sum_{i=1}^{d}\mu_{i}(\bar{x}_{i}^{1},\cdots, x_{i}^{d-1}, x_{i}^{d}),
\end{align*}
and $\bar{\bx}$ is contained in the interior of the convex hull of $(\bar{x}^{1},\cdots, \bar{x}^{d-1}, 
z(\bar{x}^{1},\cdots, \bar{x}^{d-1}))\cup \{\bx_{i}\}_{i=1}^{d}$. Due to (\ref{S_dense}), we can take 
$\bx_{d+1}\in S$ close enough to $(\bar{x}^{1},\cdots, \bar{x}^{d-1}, z(\bar{x}^{1},\cdots, \bar{x}^{d-1}))$, 
such that the point $\bar\bx$ is contained in the convex hull of $\{\bx_{i}\}_{i=1}^{d+1}$.  Then we can infer
\begin{align*}
v(\bar{\bx})& \leq \max_{1\leq i\leq d+1} v(\bar{x}_{i}). 
\end{align*}
By the construction of the set $S$, $v(\bx)< b_{v}(\bx_{0})+{2}\epsilon/3$ for any $\bx\in S$. Thus we have
\begin{align*}
v(\bar{\bx}) < b_{v}(\bx_{0})+{2}\epsilon/3 = \tilde{b}_{v}(\bx_{0}) - \epsilon/3.
\end{align*}
This contradicts with (\ref{vertex_point2}). 
\end{proof}
\vspace{35pt}

\section{Convergence of a sequence of convex functions}
Throughout this section, we denote by $\{\Omega_{n}\}_{n=1}^{+\infty}$ a sequence of open 
convex subdomains of $\Omega$, and $\{v_{n}\}_{n=1}^{+\infty}$ a sequence of convex functions with
\begin{align}
\label{assmp_function_sequence}
v_{n} \in W^{+}(\Omega_{n}),\quad \forall n\in \mathbb{N}.
\end{align}
Furthermore, we assume that for any $\delta>0$, there is $N=N(\delta)\in\mathbb{N}$,  
\begin{align}
\label{assmp_subdomains}
\overline{\Omega_{\delta}} \subset \Omega_{n} \subset \Omega, \quad \forall n\geq N.
\end{align}
This section would consist of the following two parts:

\subsection{Convergence of a sequence of convex functions inside domain}\

\vspace{15pt}

The main result of this subsection is Theorem~\ref{thm_convex_sequence_converge_domain}. 
\begin{theorem}
\label{thm_convex_sequence_converge_domain}
We assume that (\ref{assmp_function_sequence}, \ref{assmp_subdomains}) hold and there is $M< +\infty$ such that  it holds:
\begin{align}
\label{assmp_convex_functions}
\Vert v_{n}\Vert_{L^{\infty}(\Omega_{n})} \leq M, \quad \forall n\in\mathbb{N}. 
\end{align}
Then there is a subsequence $\{v_{n_{k}}\}_{k=1}^{+\infty}$ of $\{v_n\}^{+\infty}_{n=1}$ and a function $v_{0}\in W^{+}(\Omega)$, 
such that for any $\delta>0$, 
\begin{align*}
\Vert v_{n_{k}} -  v_{0}\Vert_{L^{\infty}(\overline{\Omega_{\delta}})}\ {\longrightarrow}\ 0 \hspace{20pt} \text{as} \ k\rightarrow +\infty.
\end{align*}
Moreover, if we define the set functions $\nu_{k}$ and $\nu_{0}$ by
\begin{align*}
\nu_{k}(e) & := \int_{\partial v_{n_{k}}(e)} R(\bp) d\bp,\quad \forall \text{ Borel set } e\subset \Omega_{n_{k}},\\
\nu_{0}(e) & := \int_{\partial v_{0}(e)} R(\bp) d\bp,\quad \forall \text{ Borel set } e\subset \Omega.
\end{align*}
Then, $\nu_{0}$ is a measure in $\Omega$, and $\nu_{k}$ is a measure in $\Omega_{n_{k}}$ 
for any $n\in \mathbb{N}$. Furthermore, it holds:
$
\nu_{k} \rightharpoonup \nu_{0}\text{ weakly, as }k\rightarrow +\infty,
$
i.e. for any $f\in C_{c}(\Omega)$,  it holds:
\begin{align*}
\int_{\Omega_{n_{k}}}f d\nu_{k} \rightarrow \int_{\Omega}f d\nu_{0},\text{ as } k\rightarrow +\infty.
\end{align*}
\end{theorem}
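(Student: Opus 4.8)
The plan is to prove the $L^\infty$-convergence on compact subsets first, then establish the weak convergence of the associated measures. For the first part, I would use the uniform Lipschitz bounds that convex functions enjoy on compact subsets together with the Arzelà–Ascoli theorem. Concretely, fix $\delta>0$; by \eqref{assmp_subdomains} all $v_{n}$ with $n\geq N(\delta/2)$ are defined and bounded by $M$ on $\overline{\Omega_{\delta/2}}$, and a standard convexity estimate bounds the Lipschitz constant of $v_n$ on $\overline{\Omega_\delta}$ in terms of $M$ and $\mathrm{dist}(\Omega_\delta,\partial\Omega_{\delta/2})$. Hence on each $\overline{\Omega_\delta}$ the tail of the sequence is uniformly bounded and equicontinuous, so a subsequence converges uniformly there. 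Running this over $\delta=1/m$, $m\in\mathbb N$, and extracting a diagonal subsequence $\{v_{n_k}\}$, I obtain a single subsequence converging uniformly on every $\overline{\Omega_\delta}$ to a limit $v_0$ defined on $\bigcup_{\delta>0}\Omega_\delta=\Omega$. Uniform limits of convex functions are convex, and $v_0$ inherits local boundedness, so $v_0\in W^+(\Omega)$.

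Next, for the measure statement, I would first check that $\nu_k$ and $\nu_0$ are genuine Borel measures: this is the standard Aleksandrov argument (countable additivity of $e\mapsto|\partial v(e)|$ on Borel sets, hence of $e\mapsto\int_{\partial v(e)}R$), which the paper already invokes in Definitions~\ref{def_ma_measure_eqs} and~\ref{def_weak_eqs} via the sub-differential being measurable (``Property ($D$)'' of Bakelman). For the weak convergence $\nu_k\rightharpoonup\nu_0$, I would follow the classical Aleksandrov scheme: (i) if $v_{n_k}\to v_0$ uniformly on an open set $\omega$ with $\overline\omega$ compact in $\Omega$ and $\nu_0(\partial\omega)=0$, then $\nu_k(\omega)\to\nu_0(\omega)$; and (ii) conclude convergence of $\int f\,d\nu_k$ for $f\in C_c(\Omega)$ by approximating $f$ by simple functions built from such ``good'' open sets, using that the exceptional radii (where $\nu_0$ charges a sphere) are at most countable. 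The core geometric input for (i) is the upper-semicontinuity/lower-semicontinuity pair: $\limsup_k\partial v_{n_k}(\overline\omega)\subset\partial v_0(\overline\omega)$ and $\partial v_0(\omega)\subset\liminf_k\partial v_{n_k}(\omega)$ (up to sets of measure zero), which follow from the definition of the sub-differential together with uniform convergence; then dominated/monotone convergence against the fixed density $R\in L^1_{\mathrm{loc}}$ gives $\limsup_k\nu_k(\overline\omega)\le\nu_0(\overline\omega)$ and $\nu_0(\omega)\le\liminf_k\nu_k(\omega)$, and $\nu_0(\partial\omega)=0$ closes the gap.

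I would organize (i) as follows. For the $\limsup$ inequality: if $\bp\notin\partial v_0(\overline\omega)$, there is an affine function below $v_0$ on $\Omega$ touching strictly above at $\overline\omega$ with slope $\bp$; uniform convergence on a slightly larger compact set forces $\bp\notin\partial v_{n_k}(\overline\omega)$ for large $k$ except possibly near $\partial\omega$, and a careful bookkeeping (or passing to $\partial v_{n_k}(\omega_\varepsilon)$ for $\omega_\varepsilon\Subset\omega$) yields $\bigcap_j\overline{\bigcup_{k\ge j}\partial v_{n_k}(\overline\omega)}\subset\partial v_0(\overline\omega)$ up to the boundary layer; since $R\in L^1_{\mathrm{loc}}$, integrating gives the claim. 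For the $\liminf$ inequality: if $\bp\in\partial v_0(\omega)$ is a point of differentiability-type genericity, a small perturbation of the supporting plane of $v_0$ is eventually a supporting plane of $v_{n_k}$ at a point of $\omega$, so $\bp\in\liminf_k\partial v_{n_k}(\omega)$ up to a null set; integrate $R$ and use Fatou.

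The main obstacle I anticipate is (i)—specifically, controlling what happens to sub-differentials near $\partial\omega$ and making the "up to measure zero'' and "up to the boundary layer'' statements precise enough that integrating the \emph{variable} density $R$ (rather than Lebesgue measure) still goes through. The two saving facts are that $R\in L^1_{\mathrm{loc}}(\mathbb R^d)$ so the integral is absolutely continuous with respect to Lebesgue measure (boundary layers of small Lebesgue measure contribute little), and that for a fixed convex $v_0$ the set of radii $t$ with $\nu_0(\partial\omega_t)>0$ is countable, so one may always shrink/enlarge $\omega$ slightly to a "good'' set. Everything else—the diagonal extraction, convexity of the limit, and the reduction of the $C_c$ test-function statement to indicator functions of good open sets—is routine once (i) is in hand.
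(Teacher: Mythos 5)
Your approach is essentially the same as the paper's: it splits the claim into an Arzelà--Ascoli plus diagonal argument for the locally uniform convergence (with the convexity-derived local Lipschitz bound), and then the set-convergence of sub-differentials ($\limsup_k\partial v_{n_k}(F)\subset\partial v_0(F)$ for compact $F$ and $\liminf_k\partial v_{n_k}(Q)\supset\partial v_0(K)$ for $K\Subset Q$) combined with Fatou and a portmanteau-type reduction to sets with $\nu_0$-null boundary, of which there are enough since $\nu_0$ charges only countably many members of any nested family. The only stylistic difference is that the paper cites \cite[Lemma~1.2.2]{Gutierrez01} for the sub-differential set-convergence rather than re-deriving it, and implements the reduction from $f\in C_c(\Omega)$ via the layer-cake formula over level sets $\{f>t\}$ (citing \cite[Proposition~2.16]{Maggi2012} for countability of the exceptional $t$), whereas you describe a simple-function approximation by balls with $\nu_0$-null spheres; these are equivalent routes.
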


The proof of Theorem~\ref{thm_convex_sequence_converge_domain} comes from Lemma~\ref{lemma_convex_sequence} 
and Lemma~\ref{lemma_measure_weak_converge} immediately.

\begin{lemma}
\label{lemma_convex_sequence}
We assume that (\ref{assmp_function_sequence}), (\ref{assmp_subdomains}) and (\ref{assmp_convex_functions}) hold. 
Then there is a subsequence $\{v_{n_{k}}\}_{k=1}^{+\infty}$ of $\{v_n\}^{+\infty}_{n=1}$ and a function $v_{0}\in W^{+}(\Omega)$, 
such that for any $\delta>0$, 
\begin{align*}
\Vert v_{n_{k}} -  v_{0}\Vert_{L^{\infty}(\overline{\Omega_{\delta}})}\ {\longrightarrow}\ 0 \ \hspace{15pt}\text{as}\  k\rightarrow +\infty.
\end{align*}
\end{lemma}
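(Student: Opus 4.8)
The plan is to prove Lemma~\ref{lemma_convex_sequence} via the classical compactness theory for convex functions, adapted to the setting of varying domains $\Omega_n$. The key point is to exhaust $\Omega$ by a countable family of compactly contained subdomains, extract a locally uniformly convergent subsequence by a diagonal argument, and then verify that the limit is convex and globally bounded. First I would fix an increasing sequence of parameters $\delta_j\downarrow 0$ and set $K_j := \overline{\Omega_{\delta_j}}$, so that $\bigcup_j \Omega_{\delta_j} = \Omega$ and each $K_j$ is compact with $K_j \subset \Omega_{\delta_{j+1}}$. By (\ref{assmp_subdomains}) there is $N(\delta_j)$ so that for $n \geq N(\delta_j)$ we have $K_j \subset \Omega_n$, hence each $v_n$ is a genuine convex function on a neighborhood of $K_j$ for all large $n$.

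The main analytic input is an interior Lipschitz estimate: a convex function bounded by $M$ on $\Omega_{\delta_j}$ is Lipschitz on the slightly smaller set $\Omega_{\delta_{j-1}}$ (say) with a constant depending only on $M$ and on $\mathrm{dist}(\Omega_{\delta_{j-1}}, \partial\Omega_{\delta_j})$; this follows from the fact that the slope of a supporting hyperplane at an interior point is controlled by the oscillation divided by the distance to the boundary. Combined with the uniform bound (\ref{assmp_convex_functions}), this makes $\{v_n\}_{n \geq N(\delta_j)}$ uniformly bounded and equi-Lipschitz on $K_{j-1}$, so by Arzel\`a--Ascoli I can extract a subsequence converging uniformly on $K_{j-1}$. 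Doing this successively for $j = 2, 3, \dots$ and passing to the diagonal subsequence $\{v_{n_k}\}$ yields a limit function $v_0$ defined on $\Omega = \bigcup_j \Omega_{\delta_j}$ with $\Vert v_{n_k} - v_0 \Vert_{L^\infty(\overline{\Omega_\delta})} \to 0$ for every $\delta > 0$ (since any such $\Omega_\delta$ is contained in some $K_{j-1}$).

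It remains to check $v_0 \in W^+(\Omega)$. Convexity is immediate: the inequality $v_{n_k}((1-t)\bx + t\by) \leq (1-t) v_{n_k}(\bx) + t v_{n_k}(\by)$ holds for all $\bx,\by$ in any fixed $\Omega_{\delta_j}$ once $n_k$ is large, and passes to the limit by pointwise (indeed uniform) convergence; the global bound $\Vert v_0\Vert_{L^\infty(\Omega)} \leq M$ likewise passes to the limit. Thus $v_0$ is convex and bounded on $\Omega$, hence $v_0 \in W^+(\Omega)$, completing the proof.

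The step I expect to be the main (though still routine) obstacle is bookkeeping the domain dependence: one must be careful that for each compact $K_{j-1}$ the tail $\{v_n : n \geq N(\delta_j)\}$ is what gets the equi-Lipschitz bound, and that the diagonal extraction is organized so the final subsequence works simultaneously on every $K_j$. A secondary technical point is justifying the interior Lipschitz bound uniformly in $n$ despite the domains $\Omega_n$ changing — this is handled by always working on a fixed $K_j$ that sits compactly inside $\Omega_{\delta_{j+1}} \subset \Omega_n$ for all large $n$, so the relevant distance to the boundary is bounded below independently of $n$.
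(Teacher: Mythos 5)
Your proposal is correct and follows essentially the same route as the paper: uniform interior Lipschitz bounds for convex functions derived from the uniform $L^\infty$ bound and the distance to $\partial\Omega_n$, followed by Arzel\`a--Ascoli on an exhausting sequence of compacts and a diagonal extraction. The only difference is presentational: you spell out the diagonal argument across the $K_j$'s explicitly, whereas the paper's proof leaves that step implicit after establishing equicontinuity on each fixed $\overline{\Omega_\delta}$.
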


\begin{proof}
By (\ref{assmp_subdomains}), we know that there exists some $N=N(\delta)\in \mathbb{N}$ such that 
$
\overline{\Omega_{\delta}} \subset \Omega_{n}$   and   $ \text{dist}(\partial \Omega_{n}, \Omega_{\delta}) 
\geq {\delta}/{2}$ for all  $ n\geq N.$
By (\ref{assmp_convex_functions}), it is easy to see that 
\begin{align*}
\sup_{\bx, \by\in \overline{\Omega_{\delta}}} \vert v_{n}(\bx) - v_{n}(\by)\vert\leq \varrho_{\delta}\cdot {\delta}/{2}, 
\quad \forall n\geq N.
\end{align*}
Here $
\varrho_{\delta}:= {4M}/{\delta}.
$
By the convexity of $\{v_{n}\}_{n=1}^{+\infty}$, we infer that 
$
\partial v_{n}(\overline{\Omega_{\delta}}) \subset \overline{B_{\varrho_{\delta}}(\boldsymbol{0})}
 \subset \mathbb{R}^{d}$ for all  $n\geq N.
$
Therefore for any $\bp\in \cap_{n\geq N}\partial v_{n}(\overline{\Omega_{\delta}})$, we have that $ \vert \bp\vert \leq \varrho_{\delta} $ and
\begin{align*}
 v_{n}(\bx) - v_{n}(\by) \geq \bp\cdot (\bx - \by) 
\geq -\varrho_{\delta}\cdot \vert \bx - \by \vert,\quad \forall \bx,\by \in \overline{\Omega_{\delta}}.
\end{align*}
This statement implies the equicontinuity of $\{v_{n}\}_{n\geq N}$ on $\overline{\Omega_{\delta}}$. 

Therefore, by Ascoli-Arzel{\`{a}} Theorem, there exists a function $v_{0}\in W^{+}(\Omega)$ and 
a subsequence $\{v_{n_{k}}\}_{k=1}^{+\infty}$ of $\{v_n\}^{+\infty}_{n=1}$, such that for any $\delta>0$
\begin{align*}
\lim_{k\rightarrow +\infty} \Vert v_{n_{k}} - v_{0}\Vert_{L^{\infty}(\overline{\Omega_{\delta}})}=0.
\end{align*}
\end{proof}

\begin{lemma}
\label{lemma_measure_weak_converge}
Let (\ref{assmp_function_sequence}, \ref{assmp_subdomains}) hold. We assume that there is 
a function $v_{0}\in W^{+}(\Omega)$, such that for any $\delta>0$,
\begin{align}
\label{uniform_conv_compactly}
\lim_{n\rightarrow +\infty} \Vert v_{n} - v_{0}\Vert_{L^{\infty}(\overline{\Omega_{\delta}})}=0.
\end{align} 
We define the set functions $\nu_{n}$ and $\nu_{0}$ by
\begin{align*}
\nu_{n}(e) & := \int_{\partial v_{n}(e)} R(\bp) d\bp,\quad \forall \text{ Borel set } e\subset \Omega_{n},\\
\nu_{0}(e) & := \int_{\partial v_{0}(e)} R(\bp) d\bp,\quad \forall \text{ Borel set } e\subset \Omega.
\end{align*}
Then, $\nu_{0}$ is a measure in $\Omega$, and $\nu_{n}$ is a measure in $\Omega_{n}$ 
for any $n\in \mathbb{N}$. Furthermore, for any $f\in C_{c}^{0}(\Omega)$, 
\begin{align*}\int_{\Omega_{n}}f d\nu_{n} \rightarrow \int_{\Omega}f d\nu_{0},\text{ as } n\rightarrow +\infty.
\end{align*}
\end{lemma}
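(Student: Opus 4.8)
### Proof proposal for Lemma~\ref{lemma_measure_weak_converge}

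The plan is to split the assertion into two essentially independent pieces: (i) the claim that $\nu_0$ and the $\nu_n$ are genuine (countably additive) Borel measures, and (ii) the weak convergence $\nu_n \rightharpoonup \nu_0$. For (i), the set functions $e \mapsto \int_{\partial v(e)} R(\bp)\,d\bp$ are well-defined because, by Bakelman's Property~($D$), $\partial v(e)$ is Lebesgue measurable whenever $e$ is Borel (and $R \in L^1_{\mathrm{loc}}$ with $R>0$ makes the integral meaningful, possibly $+\infty$ a priori, but finite on sets compactly contained in the domain since then $\partial v(e)$ is bounded, as in the proof of Lemma~\ref{lemma_convex_sequence}). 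Countable additivity reduces to the classical fact that for a convex function the images $\partial v(e_1), \partial v(e_2)$ of disjoint Borel sets overlap only on a Lebesgue-null set; since $R\,d\bp$ is absolutely continuous with respect to Lebesgue measure, the overlaps contribute nothing, and $\sigma$-additivity of $\nu$ follows from $\sigma$-additivity of $R\,d\bp$. I would cite the relevant properties from \cite[Section~9.4]{Bakelman94} rather than reprove them.

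For (ii), fix $f \in C_c^0(\Omega)$ and let $K = \mathrm{supp}(f)$. Choose $\delta>0$ with $K \subset \Omega_\delta$, and then $N$ from (\ref{assmp_subdomains}) so that $\overline{\Omega_\delta}\subset \Omega_n$ for $n\ge N$; thus $\int_{\Omega_n} f\,d\nu_n = \int_{\Omega_\delta} f\,d\nu_n$ for all large $n$, and likewise $\int_\Omega f\,d\nu_0 = \int_{\Omega_\delta} f\,d\nu_0$. So it suffices to show $\nu_n \rightharpoonup \nu_0$ as measures on the fixed relatively compact set $\Omega_\delta$. The standard route is a Portmanteau-type argument: it is enough to prove the two one-sided estimates
\begin{align*}
\limsup_{n\to\infty} \nu_n(F) \le \nu_0(F) \quad\text{for every compact } F\subset\Omega_\delta, \qquad
\liminf_{n\to\infty} \nu_n(U) \ge \nu_0(U) \quad\text{for every open } U\subset\subset\Omega_\delta,
\end{align*}
together with the mass bound $\sup_n \nu_n(\overline{\Omega_\delta})<\infty$ (which follows from $\partial v_n(\overline{\Omega_\delta})\subset \overline{B_{\varrho_\delta}(\boldsymbol 0)}$, exactly as in Lemma~\ref{lemma_convex_sequence}, so that $\nu_n(\overline{\Omega_\delta}) \le \int_{\overline{B_{\varrho_\delta}(\boldsymbol 0)}} R\,d\bp$).

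The heart of the matter — and the step I expect to be the main obstacle — is a set-theoretic containment for subdifferential images under uniform-on-compacts convergence of convex functions: if $v_n \to v_0$ uniformly on $\overline{\Omega_\delta}$ and $F\subset\Omega_\delta$ is compact, then
\begin{align*}
\limsup_{n\to\infty}\partial v_n(F) \subset \partial v_0(F)\cup Z,
\end{align*}
where $Z$ is a Lebesgue-null set (the set of $\bp$ that are subgradients of $v_0$ at more than one point, i.e. where $v_0$ fails to be "strictly supported"; $Z$ has measure zero because $v_0$ is differentiable a.e.). Concretely: if $\bp_n \in \partial v_n(\bx_n)$ with $\bx_n\in F$, $\bx_n\to\bx\in F$, $\bp_n\to\bp$, then passing to the limit in the supporting-plane inequality $v_n(\by)\ge v_n(\bx_n)+\bp_n\cdot(\by-\bx_n)$ for $\by$ in a neighborhood of $F$ gives $v_0(\by)\ge v_0(\bx)+\bp\cdot(\by-\bx)$, so $\bp\in\partial v_0(\bx)\subset\partial v_0(F)$; one must control that $\bp_n$ stays bounded, which is again the $\varrho_\delta$ bound. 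Since $R\,d\bp$ ignores the null set $Z$, this yields $\limsup_n \nu_n(F)\le \nu_0(F)$ by a standard measure-theoretic upper-semicontinuity argument (approximate $\partial v_0(F)$ from outside by open sets and use $R\in L^1_{\mathrm{loc}}$). The complementary lower bound on open sets is obtained by the mirror-image inclusion $\partial v_0(U)\subset \liminf_n \partial v_n(U)$ modulo a null set, proved by taking a point of differentiability of $v_0$ in $U$ with gradient $\bp$, perturbing the supporting plane slightly as in the proof of Lemma~\ref{lemma_comparision_aux2} to force a nearby supporting plane of $v_n$ with contact point in $U$, and again discarding the null set where $v_0$ is non-differentiable. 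Combining the two one-sided estimates with the uniform mass bound gives $\nu_n\rightharpoonup\nu_0$ on $\Omega_\delta$, hence $\int_{\Omega_n}f\,d\nu_n\to\int_\Omega f\,d\nu_0$, completing the proof.
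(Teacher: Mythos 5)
Your proposal is correct and takes essentially the same route as the paper: reduce to a fixed compact $\overline{\Omega_\delta}$ containing $\mathrm{supp}(f)$, use the set inclusions $\limsup_n\partial v_n(F)\subset\partial v_0(F)$ (compact $F$) and $\liminf_n\partial v_n(Q)\supset\partial v_0(K)$ (compact $K\subset$ open $Q$) coming from uniform convergence of convex functions, apply Fatou to get the one--sided measure bounds, then pass to the weak convergence of $\int f\,d\nu_n$. The paper cites \cite[Lemma~1.2.2]{Gutierrez01} for the set inclusions, where you sketch a direct proof; both work, and the local--to--global step in your supporting--plane argument is fine by convexity. The only substantive difference is in the last step: you invoke Portmanteau abstractly, whereas the paper makes it concrete via the layer--cake identity $\int f\,d\nu_n=\int_0^\infty\nu_n(\{f>t\})\,dt$, the foliation fact from \cite{Maggi2012} that $\nu_0(\{f=t\})>0$ for at most countably many $t$ (so $\nu_n(\{f>t\})\to\nu_0(\{f>t\})$ for a.e.\ $t$), and dominated convergence with the uniform mass bound. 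Your version is fine but does rely on (3) of Portmanteau holding, i.e.\ $\nu_n(\overline{\Omega_\delta})\to\nu_0(\overline{\Omega_\delta})$; this does follow from your two one--sided bounds applied to $\overline{\Omega_\delta}$ (closed) and a slightly smaller $\Omega_{\delta'}$ (open), but it is worth saying so rather than treating it as automatic. Two minor slips worth fixing: the Lebesgue--null set $Z$ is not needed in the $\limsup$ direction (your supporting--plane passage already lands in $\partial v_0(F)$ with no exceptional set), and the reason the set of $\bp$'s belonging to two distinct subdifferentials is null is not ``$v_0$ is differentiable a.e.''\ (that concerns points $\bx$) but rather the classical fact that the gradient map of a convex function is injective off a null set of $\bp$'s (\cite[Lemma~1.1.12]{Gutierrez01}, or a.e.\ differentiability of the conjugate $v_0^*$).
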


\begin{proof}
Since $R>0$ and $R\in L_{loc}^{1}(\mathbb{R}^{d})$, thus $\nu_{0}$ is a measure in $\Omega$, 
and $\nu_{n}$ is a measure in $\Omega_{n}$ for any $n\in\mathbb{N}$ (see \cite[Theorem~$1.1.13$]{Gutierrez01}). 

By (\ref{assmp_subdomains}), for any compact set $F\subset \Omega$ and any open set $Q$ with $\overline{Q}\subset \Omega$, 
we have that $F\subset \Omega_{n}$ {  and  } $\overline{Q} \subset \Omega_{n}$ for any $n\in\mathbb{N}$ large enough.
By (\ref{uniform_conv_compactly}) and \cite[Lemma~$1.2.2$]{Gutierrez01}, there hold:
$
\limsup_{n\rightarrow +\infty}\partial v_{n}(F) \subset \partial v_{0}(F)$ {  and  } 
$\liminf_{n\rightarrow +\infty} \partial v_{n}(Q)\supset\partial v_{0}(K)$ { for any compact set } $K\subset Q.
$
Then by Fatou Lemma, we obtain
\begin{align}
\label{measure_weak_converge1}
\limsup_{n\rightarrow +\infty} \nu_{n}(F) \leq \nu_{0}(F) \text{  and  }
\liminf_{n\rightarrow +\infty} \nu_{n}(Q) \geq \nu_{0}(Q),
\end{align}
which implies that 
\begin{align}
\label{measure_weak_converge2}
\lim_{n\rightarrow +\infty} \nu_{n}(B) = \nu_{0} (B). 
\end{align}for any Borel set $B\subset \Omega$ with $\overline{B}\subset \Omega$ and 
$\nu_{0}(\partial B)=0$.
Now we choose $f\in C_{c}^{0}(\Omega)$ with $f\geq 0$. (in fact, we can write $f= f^{+} - f^{-}$). 
By (\ref{assmp_subdomains}), we have that for $n\in \mathbb{N}$ large enough, 
\begin{align*}
\int_{\Omega_{n}} f d\nu_{n} = \int_{0}^{+\infty}
\nu_{n}(\{\bx\in\Omega_{n}: f(\bx)> t\})dt 
= \int_{0}^{+\infty} \nu_{n}(\{\bx\in\Omega: f(\bx)> t\})dt.
\end{align*}
Let $B_{t}:=\{\bx\in\Omega: f(\bx)> t\},\forall t>0$. 
Since $f\in C_{c}^{0}(\Omega)$, then      $B_{t}$ { is Borel  and } $\partial B_{t} \subset A_{t} := \{\bx\in \Omega: f(\bx) =t \}$ for any $t>0$.
Furthermore, if $n\in \mathbb{N}$ large enough, $B_{t} \subset \text{Supp}(f) \subset \Omega_{n}.$
By foliations of Borel sets (see \cite[Proposition $2.16$]{Maggi2012}), we know that 
$\nu_{0}(A_{t})>0$ for at most countably many $t\in (0,+\infty)$. 
Hence, $\exists J \subset (0, +\infty)$ with $\vert J\vert>0$ and $\vert (0,+\infty)\backslash J \vert = 0$ 
such that $\nu_{0}(A_{t})=0, \forall t\in J$. This implies that $\nu_{0}(\partial B_{t}) = 0,\forall t\in J$. 
From (\ref{measure_weak_converge2}), we obtain 
\begin{align}
\label{measure_weak_converge3}
\lim_{n\rightarrow +\infty} \nu_{n}(B_{t}) = \nu_{0}(B_{t}),\quad \forall t\in J.
\end{align}
Moreover by (\ref{measure_weak_converge1}), there is a positive constant $C_{0}$ such that for any $n\in\mathbb{N}$
\begin{align}
\label{measure_weak_converge4}
\nu_{n}(B_{t}) &\leq  C_{0}\big( 1 + \nu_{0}(\text{Supp}(f)) \big)\cdot \chi_{[0, \overline{f}]}(t)
\end{align}where $\overline{f}:= \displaystyle\sup_{\bx\in \Omega}f(\bx)$ and $\displaystyle\chi_{[0, \overline{f}]}$ denotes the characteristic function on $[0, \overline{f}]$.
By dominated convergence Theorem, we infer, from (\ref{measure_weak_converge3}) and (\ref{measure_weak_converge4}), that 
\begin{align*}
\int_{\Omega_{n}} f d\nu_{n} = \int_{0}^{+\infty} \nu_{n}(B_{t}) dt 
{\longrightarrow} \int_{0}^{+\infty}\nu_{0}(B_{t}) dt = \int_{\Omega} f d\nu_{0}, \ \text{as}\ {n\rightarrow +\infty}.
\end{align*}
\end{proof}
\vspace{20pt}

\subsection{Convergence of a sequence of borders of convex functions}\

\vspace{10pt}

In this subsection, we study the convergence property of border of a sequence of convex functions. 
The main results in this part are Lemma~\ref{lemma_border_upper_bound} and Theorem~\ref{thm_border_converge}.  

Before giving  main results, we firstly need to  introduce some important notations (see also Bakelman[3]). Let $\ba_{0}$ be any point of $\partial\Omega$. Then there is a supporting $(d-1)$-plane $\alpha$ of $\partial\Omega$ 
passing through $\ba_{0}$, an open $d$-ball $U_{\rho}(\ba_{0})$ with the center $\ba_{0}$, and the radius $\rho>0$ 
such that the convex $(d-1)$-surface 
\begin{align*}
\Gamma_{\rho}(\ba_{0}) := \partial\Omega \cap U_{\rho}(\ba_{0})
\end{align*}
has the one-to-one orthogonal projection $\Pi_{\alpha}:\Gamma_{\rho}(\ba_{0})\rightarrow \alpha$. 
Moreover, the unit normal of $\alpha$ in the direction of the halfspace of $\mathbb{R}^{d}$, where 
$\overline{\Omega}$ stays, passes through interior points of $\Omega$. 
Let $x^{1},\cdots, x^{d-1}, x^{d},z$ be the Cartesian coordinates in $\mathbb{R}^{d+1}$ with the following properties:
\begin{itemize}
\item
$\ba_{0}$ is the origin. 

\item
The axes $x^{1},\cdots,x^{d-1}$ stay in the plane $\alpha$.

\item
The axis $x^{d}$ is directed along the interior normal of $\partial\Omega$ at the point $\ba_{0}$.

\item
The axis $z$ is orthogonal to $\mathbb{R}^{d}$. 
\end{itemize}
Clearly, the convex $(d-1)$-surface $\Gamma_{\rho}(\ba_{0})$ is the graph of 
$g(x^{1},\cdots, x^{d-1})\in W^{+}(\Pi_{\alpha}(\Gamma_{\rho}(\ba_{0})))$. Obviously, 
$
g(0,\cdots,0)=0 \text{ and  } g(x^{1},\cdots, x^{d-1})\geq 0 
\text{ for all points of the set } \Pi_{\alpha}(\Gamma_{\rho}(\ba_{0})).
$

\begin{definition}
\label{def_local_para}
(Local parabolic support)
We shall say that $\partial\Omega$ has a local parabolic support of order $\tau \geq 0$ at the point $\ba_{0}$ 
if there are positive numbers $\rho_{0}$ and $\eta(\ba_{0})$ such that 
\begin{align*}
g(x^{1},\cdots,x^{d-1})\geq \eta(\ba_{0}) \big(\vert x^{1}\vert^{2}+\cdots + \vert x^{d-1}\vert^{2} \big)^{\frac{\tau+2}{2}}, 
\quad \forall (x^{1},\cdots,x^{d-1})\in \Pi_{\alpha}(\Gamma_{\rho}(\ba_{0})).
\end{align*}
\end{definition}

\begin{definition}
\label{def_para}
(Boundary having a parabolic support)
We shall say that $\partial\Omega$ has a parabolic support of order not smaller than a constant $0\leq\tau <+\infty$, 
if the local parabolic support of $\partial\Omega$ has order not smaller than $\tau$ at all points $\ba\in\partial\Omega$.
\end{definition}

\begin{definition}
\label{def_topological_limit}
(Topological limit of sets\cite[Section~$3.4$]{Bakelman94})
Let $\{E_{n}\}_{n=1}^{+\infty}$ be a sequence of subsets in $\mathbb{R}^{d}$ and we denote by $\displaystyle {\overline{\lim}^T_{n\rightarrow +\infty}}E_{n}$ the superior topological limit of 
$\{E_{n}\}_{n=1}^{+\infty}$,  which is defined as 
\begin{align*}
\bx\in {\overline{\lim}}^T_{n\rightarrow +\infty} E_{n}\Leftrightarrow \exists\text{ a subsequence} \{n_{k}\}_{k=1}^{\infty} 
\text{ and }\bx_{n_{k}}\in E_{n_{k}} \text{ such that}\lim_{k\rightarrow +\infty}\bx_{n_{k}}  = \bx.
\end{align*} We also denote by $\displaystyle {\underline{\lim}}^T_{n\rightarrow +\infty}E_{n}$ the inferior topological limit of 
$\{E_{n}\}_{n=1}^{+\infty}$,  which is defined as 
\begin{align*}
\bx\in \displaystyle {\underline{\lim}}^T_{n\rightarrow +\infty}E_{n}\Leftrightarrow \exists\bx_{n}\in E_{n} 
\text{ such that}\lim_{n\rightarrow +\infty}\bx_{n}  = \bx.
\end{align*}
If 
$
\displaystyle {\overline{\lim}}^T_{n\rightarrow +\infty}E_{n}  = \displaystyle {\underline{\lim}}^T_{n\rightarrow +\infty}E_{n},
$
then we say $\{E\}_{n=1}^{+\infty}$ has a topological limit, written as $\lim^T_{n\rightarrow +\infty}E_{n}$, which is equal to ${\overline{\lim}}^T_{n\rightarrow +\infty}E_{n} $  or $ \displaystyle {\underline{\lim}}^T_{n\rightarrow +\infty}E_{n}$.
\end{definition}\

\begin{assumption}
\label{assmp_boundary_para}
$\partial\Omega$ has a parabolic support (see Definition~\ref{def_para}) of order not smaller than a constant $0\leq \tau < +\infty$.
\end{assumption}
\begin{remark} if Assumption~\ref{assmp_boundary_para} holds for $\partial \Omega$, then the domain $\Omega$ is strictly convex.
\end{remark}

\begin{assumption}
\label{assmp_R}
$R\in L_{loc}^{1}(\mathbb{R}^{d})$  and the following holds:
\begin{align*}
R(\bp) \geq C_{0}\vert \bp\vert^{-2k},\quad \forall \bp\in \mathbb{R}^{d} 
\text{ with } \vert \bp \vert \geq r_{0}>0.
\end{align*} 
Here, $k\geq 0$, $C_{0}>0$ and $r_{0}>0$ are some constants.
\end{assumption}

\begin{assumption}
\label{assmp_function_decay}
Let $\{\Omega_{n}\}_{n=1}^{+\infty}$ be a sequence of open convex subdomains  of $\Omega$ 
satisfying (\ref{assmp_subdomains}), and $\{v_{n}\}_{n=1}^{+\infty}$ a sequence of convex functions 
satisfying (\ref{assmp_function_sequence}).
We assume that the following conditions are fulfilled:
\begin{itemize}
\item[(a)] 
There is a function $v_{0}\in W^{+}(\Omega)$ such that 
$
\displaystyle \lim_{n\rightarrow +\infty}v_{n}(\bx)\ =\   v_{0}(\bx),\quad \forall \bx \in \Omega.
$

\item[(b)]
There exist  two uniform constants $C_{1}>0$ and $\lambda \geq 0$ such that  for any $\bx_{0}\in \partial\Omega$, there exists an open $d$-ball $U_{\rho}(\bx_{0})$ such that 
\begin{align*}
\liminf_{n\rightarrow +\infty}\int_{\partial v_{n}(e \cap \Omega_{n})}R(\bp)d\bp
\leq C_{1}\big( \sup_{\bx \in e}  \text{dist}(\bx, \partial\Omega)\big)^{\lambda}\vert e\vert,
\quad \forall \text{ Borel set } e\subset U_{\rho}(\bx_{0})\cap \Omega.
\end{align*}
\end{itemize}
\end{assumption}

\begin{assumption}
\label{assmp_border}
Let $\{\Omega_{n}\}_{n=1}^{+\infty}$ be a sequence of open convex subdomains  of $\Omega$ 
satisfying (\ref{assmp_subdomains}), and $\{v_{n}\}_{n=1}^{+\infty}$ be a sequence of convex functions 
satisfying (\ref{assmp_function_sequence}). 
Let $b_{n}$ be the border of $v_{n}$ for any $n\in\mathbb{N}$. 
Let $\bS_{n}$ be the graphs of $b_{n}$ for any $n\in\mathbb{N}$. We assume that 
\begin{itemize}
\item[(a)]
$b_{n}\in C^{0}(\partial\Omega_{n})$ for any $n\in \mathbb{N}$.

\item[(b)]
There is $\tilde{b}\in C^{0}(\partial \Omega)$ such that 
$\lim^T_{n\rightarrow +\infty}\bS_{n} = \tilde{\bS}$,  
where $\tilde{\bS}$ is the graph of $\tilde{b}$. 
\end{itemize}
\end{assumption}

The following Lemma \ref{lemma_border_upper_bound} gives some important convergent property for the borders of a sequence of convex functions:
\begin{lemma}
\label{lemma_border_upper_bound}
Let $\{\Omega_{n}\}_{n=1}^{+\infty}$ be a sequence of open convex subdomains  of $\Omega$ 
satisfying (\ref{assmp_subdomains}), and $\{v_{n}\}_{n=1}^{+\infty}$ a sequence of convex functions 
satisfying (\ref{assmp_function_sequence}). Let part $(a)$ of Assumption~\ref{assmp_function_decay} 
and Assumption~\ref{assmp_border} hold. Let $b_{0}$ be the border of $v_{0}$ introduced in 
Assumption~\ref{assmp_function_decay}. Then
\begin{align*}
b_{0}(\bx) \leq \tilde{b}(\bx),\quad \forall \bx \in \partial\Omega. 
\end{align*}
Here, $\tilde{b}$ is a function on $\partial\Omega$ introduced in Assumption~\ref{assmp_border}. 
\end{lemma}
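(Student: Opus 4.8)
The plan is to prove the pointwise bound $b_0(\bx) \le \tilde b(\bx)$ for each fixed $\bx\in\partial\Omega$ by a contradiction argument. Suppose there is $\bx_\star\in\partial\Omega$ with $b_0(\bx_\star) > \tilde b(\bx_\star)$; write $2\epsilon := b_0(\bx_\star) - \tilde b(\bx_\star) > 0$. By Definition~\ref{def_border}, $b_0(\bx_\star) = \liminf_{\bx\to\bx_\star} v_0(\bx)$, so there is $\delta_\star>0$ such that $v_0(\bx) > \tilde b(\bx_\star) + \tfrac{3}{2}\epsilon$ for all $\bx\in\Omega$ with $|\bx-\bx_\star|<\delta_\star$. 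Using the local graph representation of $\partial\Omega$ near $\bx_\star$ set up just before Definition~\ref{def_local_para} (writing $\Gamma_\rho(\bx_\star)$ as the graph of a convex function $g\ge 0$ with $g(\boldsymbol 0)=0$), I want to produce a single interior point $\bar\bx\in\Omega$, close to $\bx_\star$, that on one hand has $v_0(\bar\bx)$ large (at least $\tilde b(\bx_\star)+\tfrac{3}{2}\epsilon$ by the choice of $\delta_\star$), and on the other hand can be exhibited as a convex combination of points lying near $\partial\Omega$ where the values of $v_n$ — hence, in the limit, of $v_0$ — are controlled by $\tilde b$ via Assumption~\ref{assmp_border}(b) (the topological convergence $\lim^T \bS_n = \tilde\bS$) and the continuity of $\tilde b$.

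The mechanism for the second property is the convexity of the $v_n$'s combined with the geometry of $\partial\Omega$, exactly as in the proof of Lemma~\ref{lemma_continuous_border}: I would pick $d$ boundary points $\bx_1,\dots,\bx_d\in\partial\Omega$ near $\bx_\star$ in "general position" so that the graph point over a small ball in $\alpha$-coordinates lies below the hyperplane $T$ through $(\bx_i, z_i)$ (here $z_i$ large forces the constant term positive), and then a $(d+1)$-st point $\bx_{d+1}\in\partial\Omega$, so that $\bar\bx$ lands in the interior of the convex hull of $\{\bx_i\}_{i=1}^{d+1}$. Convexity of $v_n$ on $\Omega_n$ then gives
\begin{align*}
v_n(\bar\bx) \le \max_{1\le i\le d+1} v_n(\bx_i^{(n)}),
\end{align*}
where $\bx_i^{(n)}\in\Omega_n$ are interior points converging to $\bx_i\in\partial\Omega$ (available since $\overline{\Omega_\delta}\subset\Omega_n$ eventually, by~(\ref{assmp_subdomains}), and since $\Omega$ is strictly convex by Remark after Assumption~\ref{assmp_boundary_para}, so such chords stay inside). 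Letting $n\to\infty$, part~(a) of Assumption~\ref{assmp_function_decay} gives $v_0(\bar\bx)\le \max_i \limsup_n v_n(\bx_i^{(n)})$; by the definition of $b_n$ and the topological-limit hypothesis $\lim^T\bS_n=\tilde\bS$, each $\limsup_n v_n(\bx_i^{(n)})$ is at most $\tilde b(\bx_i)$ (up to an arbitrarily small slack if the $\bx_i^{(n)}$ do not lie exactly over $\bx_i$), and by continuity of $\tilde b$ at $\bx_\star$ each $\tilde b(\bx_i)< \tilde b(\bx_\star)+\tfrac{1}{2}\epsilon$ once the $\bx_i$ are chosen within a small enough neighborhood. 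This yields $v_0(\bar\bx) < \tilde b(\bx_\star)+\tfrac{1}{2}\epsilon$, contradicting $v_0(\bar\bx) \ge \tilde b(\bx_\star)+\tfrac{3}{2}\epsilon$.

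The main obstacle, and where I expect most of the care to go, is the same delicate geometric construction that occupies the bulk of Lemma~\ref{lemma_continuous_border}: choosing the points $\bx_1,\dots,\bx_{d+1}$ on the curved boundary so that (i) they are in general position in $\RRR^d$, (ii) $\bar\bx$ genuinely lies in the interior of their convex hull, and (iii) the parabolic-support behavior of $g$ near $\bx_\star$ (the graph lies above a power of the distance) guarantees the relevant strict inequality $\bar x^d < a^1\bar x^1+\cdots+a^{d-1}\bar x^{d-1}+c$ needed to split $\bar\bx$ as a convex combination involving a near-boundary point. A secondary technical point is transferring the estimate from the $v_n$, which are defined only on $\Omega_n$ and whose borders $b_n$ live on $\partial\Omega_n$, to a statement about $\tilde b$ on $\partial\Omega$: this is handled by approximating $\bx_i\in\partial\Omega$ by $\bx_i^{(n)}\in\Omega_n$, invoking the definition of the superior topological limit to bound $\limsup_n v_n(\bx_i^{(n)})$, and absorbing the errors into the $\epsilon/3$-budget. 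Once the construction of Lemma~\ref{lemma_continuous_border} is in place, essentially the same points do the job here; the novelty is only that the "target" value $b_0(\bx_\star)$ comes from $v_0$ while the "majorant" $\tilde b$ comes from the border limit of the $v_n$.
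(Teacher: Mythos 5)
Your proposal takes a genuinely different route than the paper. The paper reduces the whole problem to \emph{one dimension}: it picks a single chord $\overline{\bx\bx'}$ from the boundary point $\bx$ into $\Omega$, locates (via the radial-limit property of convex functions and the boundary extension supplied by Lemma~\ref{lemma_continuous_border}) two nearby parameters $x''_{n_\delta}<x''$ on the chord at which $v_{n_\delta}$ is respectively close to $\tilde b(\bx)$ and close to $b_0(\bx)>\tilde b(\bx)$, and then uses convexity along the chord to show that the forward slope past $x''$ must blow up as $\delta\to 0$, forcing $v_{n_\delta}(\tilde x)\to+\infty$ at the fixed interior point $\tilde x=(x+x')/2$; this contradicts the pointwise convergence $v_n\to v_0$. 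You instead try to transplant the full $d$-dimensional convex-hull construction of Lemma~\ref{lemma_continuous_border}, putting $\bar\bx$ inside a simplex spanned by $d+1$ approximating points and using $v_n(\bar\bx)\le\max_i v_n(\bx_i^{(n)})$.

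There is a genuine gap in your argument, and it sits exactly at the place where you would choose the $\bx_i^{(n)}$. Assumption~\ref{assmp_border}(b) only says $\lim^T\bS_n=\tilde\bS$, i.e.\ it controls the graphs of the \emph{borders} $b_n$ over $\partial\Omega_n$. It does not control $v_n$ at interior points of $\Omega_n$ near $\partial\Omega_n$, and the continuity of $v_n$ on $\overline{\Omega_n}$ guaranteed by Lemma~\ref{lemma_continuous_border} gives only an $n$-dependent modulus, so ``$\limsup_n v_n(\bx_i^{(n)})\le\tilde b(\bx_i)$ up to small slack'' is not justified for interior approximants $\bx_i^{(n)}\in\Omega_n$. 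If you instead force $\bx_i^{(n)}\in\partial\Omega_n$, so that $v_n(\bx_i^{(n)})=b_n(\bx_i^{(n)})$ and the topological limit applies, then you need $d+1$ points \emph{on} $\partial\Omega$ near $\bx_\star$ whose convex hull has nonempty interior in $\mathbb{R}^d$ — which requires strict convexity of $\partial\Omega$. You appeal to the Remark after Assumption~\ref{assmp_boundary_para} and to the parabolic-support geometry, but Assumption~\ref{assmp_boundary_para} is \emph{not} among the hypotheses of Lemma~\ref{lemma_border_upper_bound}; the lemma is stated for a general bounded open convex $\Omega$, and indeed the paper's one-chord argument makes no use of strict convexity at all. (Note also that the $d+1$ points in Lemma~\ref{lemma_continuous_border} live in the interior set $S\subset\Omega$, not on $\partial\Omega$, precisely to sidestep this degeneracy — so that construction does not transfer the way your proposal assumes.) Both obstructions disappear in the paper's 1D ``exploding slope'' argument: the two comparison points lie on a single segment, one of them exactly on $\partial\Omega_{n_\delta}$, and the contradiction is obtained from convexity in one variable only.
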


\begin{remark}In the proof \cite[Theorem~$10.6$]{Bakelman94},  it has been stated that the conclusion of Lemma \ref{lemma_border_upper_bound} is trivial. However, we have found that it is really not trivial and the proof needs rather tricky analysis.
\end{remark}

\begin{proof}
We shall prove it by contradiction. By part $(a)$ of Assumption~\ref{assmp_border} and Lemma~\ref{lemma_continuous_border}, 
we can extend $v_{n}$ to $\overline{\Omega_{n}}$ such that 
\begin{align*}
v_{n}\in C^{0}(\overline{\Omega_{n}}),\quad v_{n}|_{\partial\Omega_n} = b_{n},\quad \forall n\in\mathbb{N}. 
\end{align*}

Fixing an $\bx\in \partial\Omega$, then  we can take $\bx^{\prime}\in \partial\Omega \backslash \{ \bx \}$ such that the interior of 
$\overline{\bx\bx^{\prime}}$ is contained in $\Omega$,  
where $\overline{\bx \bx^{\prime}}$ denotes the line segment between $\bx$ and $\bx^{\prime}$. 
By (\ref{assmp_subdomains}), for any $n\in\mathbb{N}$ large enough, there are 
$\bx_{n}, \bx^{\prime}_{n}\in \partial \Omega_{n}$, such that 
\begin{align*}
\overline{\bx_{n}\bx_{n}^{\prime}} \subset \overline{\bx\bx^{\prime}}, \quad 
\overline{\bx_{n}\bx_{n}^{\prime}} \subset \overline{\Omega_{n}},\quad 
\bx_{n}\neq \bx_{n}^{\prime}. 
\end{align*}

We choose an orthogonal coordinate $\{x,x^{2},\cdots, x^{d}\}$ such that $\overline{\bx\bx^{\prime}}$ 
and $\{ \overline{\bx_{n}\bx_{n}^{\prime}}\}$ are all contained in $\{(y,y^{2},\cdots, y^{d})\in\mathbb{R}^{d}: 
y^{2}=\cdots=y^{d}=0\}$. 
Hence, we can use $x, x^{\prime}\in\mathbb{R}$ to represent $\bx$ and $ \bx^{\prime}$ respectively and use 
$x_{n}, x_{n}^{\prime}\in \mathbb{R}$ to represent $\bx_{n}$ and $\bx_{n}^{\prime}$ respectively  for any 
$n\in\mathbb{N}$ large enough. Without  loss of generality, we assume that 
\begin{align*}
x < x_{n}< x_{n}^{\prime}< x^{\prime},\quad \forall n\in\mathbb{N}\text{ large enough}.
\end{align*}
Due to (\ref{assmp_subdomains}), it is easy to see that $x_{n}\ {\rightarrow} \ x\ \text{ as}\  n\rightarrow +\infty. $

We assume $b_{0}(x) > \tilde{b}(x)$ and  define $\epsilon := b_{0}(x) - \tilde{b}(x)$. In the following  we claim that 
\begin{align}
\label{claim_border_converge1}
b_{n}(x_{n}) \ &{\longrightarrow} \ \tilde{b}(x), \hspace{10pt}\text{ as}\  n\rightarrow +\infty. 
\end{align}
In fact ,  $(x_{n}, b_{n}(x_{n}))\in \bS_{n}$ for all $ n\in\mathbb{N}. $
Let $\{x_{n_{k}}\}_{k=1}^{\infty}$ be a subsequence of $\{x_n\}^{\infty}_{n=1}$, such that 
$\displaystyle\lim_{k\rightarrow +\infty} (x_{n_{k}}, b_{n_{k}}(x_{n_{k}})) = (x, \liminf_{n\rightarrow +\infty}b_{n}(x_{n})).$
Due to part $(b)$ of Assumption~\ref{assmp_border} and Definition~\ref{def_topological_limit}, one  obtains that
$\displaystyle( x, \liminf_{n\rightarrow +\infty}b_{n}(x_{n}) ) \in \tilde{\bS}, $ where $\tilde{\bS}$ is the graph of $\tilde{b}\in C^{0}(\partial\Omega)$.
Similarly, we can show that $\displaystyle (x, \limsup_{n\rightarrow +\infty}b_{n}(x_{n})) \in \tilde{\bS}$. Then we get
$\displaystyle\tilde{b}(x) = \liminf_{n\rightarrow +\infty}b_{n}(x_{n}) = \limsup_{n\rightarrow +\infty}b_{n}(x_{n}). $
Therefore (\ref{claim_border_converge1}) is true. 

Due to the definition of $b_{0}$, for any $\delta > 0$, 
there exists some $x^{\prime \prime}\in (x, x+\delta)$  such that 
\begin{align*}\vert v_{0}(x^{\prime \prime}) - b_{0}(x) \vert < \epsilon/6. 
\end{align*}
According to (\ref{assmp_subdomains}), part $(a)$ of Assumption~\ref{assmp_function_decay} 
and (\ref{claim_border_converge1}), there is  $x_{n_{\delta}} \in (x, x^{\prime \prime})\subset (x, x+\delta)$ such that there hold:
\begin{align*}\vert v_{n_{\delta}}(x^{\prime\prime}) - v_{0}(x^{\prime\prime})\vert< \epsilon/6\  \text{and} \ \vert b_{n_{\delta}} (x_{n_{\delta}}) - \tilde{b}(x)\vert < \epsilon/6.\end{align*} 
Since $v_{n}\in C^{0}(\overline{\Omega_{n}})$ and $v_{n}|_{\partial\Omega_{n}}=b_{n}$ for any $n\in\mathbb{N}$, 
then there exists some  $x_{n_{\delta}}^{\prime \prime} \in (x_{n_{\delta}}, x^{\prime\prime}) $ such that \begin{align*}\vert v_{n_{\delta}}(x_{n_{\delta}}^{\prime \prime}) - b_{n_{\delta}} (x_{n_\delta}) \vert < \epsilon/6. 
\end{align*}
By the latest three estimates above, one obtains that
\begin{align}
\label{border_converge2}
\vert v_{n_{\delta}}(x^{\prime\prime}) - b_{0}(x)\vert < \epsilon/3 \quad \text{and}\quad 
\vert v_{n_{\delta}}(x_{n_{\delta}}^{\prime\prime}) - \tilde{b}(x) \vert < \epsilon/3. 
\end{align}
Taking $\tilde{x} := (x + x^{\prime})/2$,  from (\ref{assmp_subdomains}),  we know that
$\tilde{x} \in \Omega_{n}$ for all $n\in \mathbb{N} $ large enough and  $x_{n_{\delta}}^{\prime\prime} < x^{\prime\prime} < \tilde{x}$ for $\delta>0$ small enough.  
By the convexity of $v_{n_{\delta}}$, (\ref{border_converge2}) and the definition of $\epsilon$,  we have 
\begin{align*}
(x^{\prime\prime}-x_{n_{\delta}}^{\prime\prime})v_{n_{\delta}} (\tilde{x}) 
\geq & \hspace{10pt}(\tilde{x} - x_{n_{\delta}}^{\prime\prime}) v_{n_{\delta}}(x^{\prime\prime}) 
- (\tilde{x} - x^{\prime\prime}) v_{n_{\delta}}(x_{n_{\delta}}^{\prime\prime}) \\
\geq &\hspace{10pt} (b_{0}(x) - \epsilon/3)(\tilde{x} - x_{n_{\delta}}^{\prime\prime}) 
- (\tilde{b}(x) + \epsilon/3)(\tilde{x} - x^{\prime\prime}) \\
= &\hspace{10pt} (b_{0}(x) - \epsilon/3)(x^{\prime\prime} - x_{n_{\delta}}^{\prime\prime}) 
+ \epsilon(\tilde{x} - x^{\prime\prime})/3,
\end{align*}
which, together with  the constructions of $x^{\prime\prime}, x_{n_{\delta}}^{\prime\prime}$ and $\tilde{x}$, implies that
\begin{align*}
\dfrac{\tilde{x} - x^{\prime\prime}}{x^{\prime\prime}-x_{n_{\delta}}^{\prime\prime}}
{\longrightarrow} +\infty \hspace{10pt}\text{as}\  {\delta \rightarrow 0}.
\end{align*} 
This leads to $v_{n_{\delta}}(\tilde{x})\rightarrow +\infty$ as $\delta\rightarrow0$, a  contradiction with 
part $(a)$ of Assumption~\ref{assmp_function_decay}. 
\end{proof}

The following Theorem~\ref{thm_border_converge} is a minor revision of \cite[Theorem~$10.6$]{Bakelman94}.
The proof of \cite[Theorem~$10.6$]{Bakelman94} consists of three parts. Its first part is very geometrically intuitive. 
We rewrite the proof with more  detailed explanation in Appendix~\ref{sec_proof_thm_border_converge}.
\begin{theorem}
\label{thm_border_converge}
Let $\{\Omega_{n}\}_{n=1}^{+\infty}$ be a sequence of open convex subdomains  of $\Omega$ 
satisfying (\ref{assmp_subdomains}), and $\{v_{n}\}_{n=1}^{+\infty}$ a sequence of convex functions 
satisfying (\ref{assmp_function_sequence}).
Let Assumptions~\ref{assmp_boundary_para},\ref{assmp_R},\ref{assmp_function_decay},\ref{assmp_border} 
hold for $\Omega$, the function $R$, and $\{v_{n}\}_{n=1}^{+\infty}$. Now let the numbers $k,\lambda$ and 
$\tau$ satisfy:
\begin{eqnarray*} \left\{\begin{array}{lll}
 k&\leq &K \ \text{  if  }\ 0\leq k<1 \ \text{ or } \ k\geq\frac{d}{2},\\
\\
 k&<&K \ \text{  if  } \ 1\leq k \ < \frac{d}{2}
\end{array}
\right.
\end{eqnarray*}
where $K= \dfrac{d+\tau+1}{\tau+2}+\dfrac{\lambda}{2}$. 
Let $b_{0}$ be the border of $v_{0}$ introduced in 
Assumption~\ref{assmp_function_decay} and  $\tilde{b}$ be the function on $\partial\Omega$ introduced in Assumption~\ref{assmp_border}. Then 
\begin{align*}
\tilde{b}(\bx) = b_{0}(\bx),\quad \forall \bx\in \partial\Omega. 
\end{align*}
\end{theorem}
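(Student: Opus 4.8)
The plan is to prove the two inclusions $b_0\le\tilde b$ and $b_0\ge\tilde b$ on $\partial\Omega$ separately. The first is exactly Lemma~\ref{lemma_border_upper_bound}, whose hypotheses (part~(a) of Assumption~\ref{assmp_function_decay} and Assumption~\ref{assmp_border}) are all among those of the present theorem, so it remains to prove $b_0(\bx)\ge\tilde b(\bx)$ for every $\bx\in\partial\Omega$. Suppose not: fix $\bx_0\in\partial\Omega$ with $\epsilon:=\tilde b(\bx_0)-b_0(\bx_0)>0$. I would work in the local Cartesian coordinates at $\bx_0$ provided by Assumption~\ref{assmp_boundary_para}, in which $\partial\Omega$ is locally the graph $x^d=g(x^1,\dots,x^{d-1})$ of a convex function with $g(x')\ge\eta(\bx_0)\,|x'|^{\tau+2}$, $g(0)=0$, and $\Omega$ lies locally on the interior-normal side $\{x^d>g(x')\}$. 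Using part~(a) of Assumption~\ref{assmp_border} and Lemma~\ref{lemma_continuous_border}, each $v_n$ extends continuously to $\overline{\Omega_n}$ with $v_n|_{\partial\Omega_n}=b_n$.

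Next I would pick, for each small $s>0$, a reference point $\bar\bx_s\in\Omega$ near $\bx_0$ whose $x^d$-coordinate is of order $s$ (so $\mathrm{dist}(\bar\bx_s,\partial\Omega)\sim s$ by the parabolic estimate), with $\bar\bx_s\to\bx_0$ as $s\to0$ and $v_0(\bar\bx_s)$ as close to $b_0(\bx_0)=\liminf_{\bx\to\bx_0}v_0(\bx)$ as desired; by part~(a) of Assumption~\ref{assmp_function_decay} the same holds for $v_n(\bar\bx_s)$ once $n$ is large. At the same time, part~(b) of Assumption~\ref{assmp_border} (topological convergence of the graphs of the borders) together with the continuity of $\tilde b$ forces $v_n\approx\tilde b(\bx_0)$ on the portion of $\partial\Omega_n$ near $\bx_0$, hence on the portion of $\overline{\Omega_n}$ near $\bx_0$. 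Thus, on a thin set $e_s\subset U_\rho(\bx_0)\cap\Omega$ surrounding $\bar\bx_s$ — a parabolic cap $\{0<x^d<cs\}\cap\Omega$, or, if needed for the estimate below, a sub-level cap of $v_n$ near $\bx_0$ — the convex function $v_n$ increases by roughly $\epsilon$ from $\bar\bx_s$ to $\partial e_s$. The parabolic support estimate $g(x')\ge\eta|x'|^{\tau+2}$ gives $\sup_{e_s}\mathrm{dist}(\cdot,\partial\Omega)\sim s$ and $|e_s|\sim s^{(d+\tau+1)/(\tau+2)}$, and the cap is anisotropic: of extent $\sim s$ along the inner normal and $\sim s^{1/(\tau+2)}$ tangentially. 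An Aleksandrov--Bakelman--Pucci type argument then shows that $\partial v_n(e_s\cap\Omega_n)$ contains the anisotropic ``dual'' set $E_s$ of gradients of extent $\sim\epsilon/s$ along the inner normal and $\sim\epsilon/s^{1/(\tau+2)}$ tangentially, so $|E_s|\sim\epsilon^d\,s^{-(d+\tau+1)/(\tau+2)}$ (note the balance $|e_s|\,|E_s|\sim\epsilon^d$).

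Now I would compare two estimates of $\int_{\partial v_n(e_s\cap\Omega_n)}R(\bp)\,d\bp$. From above, part~(b) of Assumption~\ref{assmp_function_decay} gives $\liminf_n\int_{\partial v_n(e_s\cap\Omega_n)}R\le C_1\big(\sup_{e_s}\mathrm{dist}(\cdot,\partial\Omega)\big)^{\lambda}|e_s|\sim C_1\,s^{\lambda+(d+\tau+1)/(\tau+2)}$. From below, once $s$ is small every $\bp\in E_s$ has $|\bp|\ge r_0$, so Assumption~\ref{assmp_R} gives $\int_{\partial v_n(e_s\cap\Omega_n)}R\ge C_0\int_{E_s}|\bp|^{-2k}\,d\bp\ge C_0\,|E_s|\big(\sup_{E_s}|\bp|\big)^{-2k}\sim C_0\,\epsilon^{\,d-2k}\,s^{\,2k-(d+\tau+1)/(\tau+2)}$. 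Letting $s\to0$, these are incompatible whenever $2k-\tfrac{d+\tau+1}{\tau+2}<\lambda+\tfrac{d+\tau+1}{\tau+2}$, i.e.\ $k<K$ with $K=\tfrac{d+\tau+1}{\tau+2}+\tfrac{\lambda}{2}$; this already yields the strict case of the hypothesis. The endpoint $k=K$ is recovered by a dyadic refinement: apply part~(b) of Assumption~\ref{assmp_function_decay} to the single set $\bigcup_j e_{2^{-j}}$ (a fixed neighborhood of $\bx_0$ in $\Omega$), whose right-hand side stays bounded, while on the left $\partial v_n(\bigcup_j e_{2^{-j}}\cap\Omega_n)\supset\bigcup_j E_{2^{-j}}$, the sets $E_{2^{-j}}$ are essentially disjoint dyadic shells in gradient space, and $\int_{\bigcup_j E_{2^{-j}}}|\bp|^{-2k}\,d\bp\ge c\sum_j\int_{E_{2^{-j}}}|\bp|^{-2k}\,d\bp$; at $k=K$ each summand is bounded below by a positive constant precisely when $\int_{E_s}|\bp|^{-2k}$ is governed by its large-$|\bp|$ part (which happens exactly in the ranges $0\le k<1$ and $k\ge d/2$), so the series diverges — a contradiction. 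In the intermediate range $1\le k<d/2$ the anisotropic integral $\int_{E_s}|\bp|^{-2k}$ is instead concentrated near $|\bp|\sim s^{-1/(\tau+2)}$, its dyadic summands decay, and only the strict bound $k<K$ is available; this is exactly the dichotomy in the statement.

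I expect the main obstacle to be the construction in the second paragraph: producing the reference point $\bar\bx_s$ and, above all, choosing the cap $e_s$ so that it is simultaneously \emph{thin} (small measure, close to $\partial\Omega$, of the stated anisotropic shape) and \emph{forces a rise of $v_n$ by order $\epsilon$}, and then proving the lower bound $|E_s|\sim\epsilon^d s^{-(d+\tau+1)/(\tau+2)}$ on the sub-differential image with the correct anisotropic scaling. This is the only place where the parabolic support hypothesis is genuinely used, and it requires verifying that the rise of $v_n$ by $\approx\epsilon$ is realized over the tangential scale $s^{1/(\tau+2)}$ and normal scale $s$ dictated by $g(x')\ge\eta|x'|^{\tau+2}$, not over larger scales. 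The two-sided integral comparison and the dyadic bookkeeping at $k=K$ are then routine, apart from the sharp, anisotropy-aware evaluation of $\int_{E_s}|\bp|^{-2k}$.
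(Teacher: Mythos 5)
Your overall skeleton matches the paper's (and Bakelman's) proof: the inequality $b_0\le\tilde b$ is Lemma~\ref{lemma_border_upper_bound}; for the reverse one, argue by contradiction at a point $\bx_0$ with $\epsilon:=\tilde b(\bx_0)-b_0(\bx_0)>0$, work in parabolic coordinates at $\bx_0$, build a thin anisotropic cap whose measure scales like $\gamma^{(d+\tau+1)/(\tau+2)}$, bound $\int_{\partial v_n(\cdot)}R$ from above by part~(b) of Assumption~\ref{assmp_function_decay}, bound it from below by Assumption~\ref{assmp_R} applied to the dual (gradient) set, and compare scalings as the cap shrinks. Your ``anisotropic dual set'' $E_s$ is essentially the set $\partial V(\overline{\bQ})$ the paper controls, and your $s$ plays the role of the paper's $\gamma$.

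There are, however, two genuine gaps. First, the step you flag as the main obstacle is precisely what the paper's Part~$1$ carries out through the five claims (\ref{claim_intersect1})--(\ref{claim_intersect5}): one cannot simply assert $\partial v_n(e_s\cap\Omega_n)\supset E_s$. The paper instead constructs a convex cone $V_n$ with vertex at a point $\bQ_n\in S_{v_n}\cap\mathrm{Int}(\bK)$ near $\overline{\bQ}$ and base $\beta'\cap\bK$, shows $\partial V_n(\bQ_n)\subset\partial v_n(\mathrm{Int}(H_n(\bK)))$ by a supporting-hyperplane argument plus \cite[Lemma~$1.4.1$]{Gutierrez01}, and then passes to the limit cone $V$ with Fatou. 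This requires both the topological convergence of the border graphs (Assumption~\ref{assmp_border}(b)) and the interior locally uniform convergence of $v_n$, and the verification that $S_{v_n}$ exits $\bK$ only through $\beta'\setminus Z$ ((\ref{claim_intersect2})) is the delicate geometric input; none of this is outlined in your proposal.

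Second, and more seriously, your dyadic-refinement treatment of the endpoint $k=K$ does not work as stated. The sets $E_{2^{-j}}$ are not disjoint shells: the subdifferential of the comparison cone at its vertex is a full anisotropic cone in gradient space, with normal extent growing like $\epsilon\,2^{j}$ as the cap shrinks, so the $E_{2^{-j}}$ are nested in the normal direction and $\bigcup_j E_{2^{-j}}$ collapses to a single $E_{2^{-J}}$; the dyadic sum gives no extra mass. The paper's actual mechanism for the endpoint is different. It retains the second free parameter $\delta$ (the depth $\delta\Delta l$ of the vertex $\bQ'$), so the lower-bound integral takes the form $\int_{H^d}|\bp|^{-2k}\,d\bp\sim\gamma^{2k-(d+\tau+1)/(\tau+2)}\,C(\delta)$ with $C(\delta)=\int_0^1\big(\delta+(C''-\delta)u\big)^{-2k}u^{d-1}\,du$; when $k\ge d/2$ (equivalently when $d-1-2k\le -1$) this blows up as $\delta\to 0$, forcing a contradiction even when the two $\gamma$-exponents tie at $k=K$. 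Your single-parameter scaling in $s$ has no analogue of $\delta$ and therefore cannot reach the endpoint. Also note a small but necessary technicality the paper adds: one must enforce $\gamma\le r_0^{-1}\delta$ (their~(\ref{delta_gamma})) to ensure the dual cone lies in $\{|\bp|\ge r_0\}$ where Assumption~\ref{assmp_R} applies; this couples $\gamma$ and $\delta$ and has to be threaded through the Part~$3$ inequalities, so the endpoint analysis cannot be treated as a free afterthought.
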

\vspace{20pt}

\section{The finite element method}

In this section, we first introduce the concept of mesh, which is a sequence of convex polyhedra domains 
with standard triangulation to approximate the convex domain $\Omega$. Then, we design 
a finite element method to approximate the exact solution of (\ref{ma_measure_eqs}) and we show that 
this finite element method is well-posed.  

\subsection{The mesh}\
\vspace{5pt}

In this part, we firstly give definition of  the  mesh, which plays an important role in the finite element method. Furthermore, we  show that the convex domain $\Omega$ can be approximated by a sequence of convex polyhedra domains. 

\begin{definition}
\label{def_mesh}
For a given positive real number $h$, we denote by $\mathcal{T}_{h}$ 
a set of $d$-dimensional simplxes contained in $\overline{\Omega}$ such that the following conditions (5.1a) -(5.1d) are fulfilled:
\begin{subequations}
\begin{align}
\label{def_mesh1}
&\text{for any }T,T^{\prime}\in \mathcal{T}_{h} \text{ with} \ T\neq T^{\prime},  
T\cap T^{\prime} \text{ is a } \bar{d} \text{-dimensional sub-simplex of}\nonumber\\
&\text{ both } T  \text{ and }T^{\prime}. \text{ Here}, 0\leq \bar{d}\leq d-1;&\\
\label{def_mesh2}
 &h = \max_{T\in\mathcal{T}_{h}}h_{T}, \ \text{where}\ h_{T} \text{ is  the diameter of } T\in\mathcal{T}_{h};\\
\label{def_mesh3}
 &\Omega_{h} := \text{Int}\big(\overline{\cup_{T\in\mathcal{T}_{h}}T}\big) \ \text{ is a convex domain};\\
\label{def_mesh4}
&\text{  all vertexes} \text{ of } \partial\Omega_{h} \text{ are contained on } \partial\Omega.
\end{align}
\end{subequations}
Then $\mathcal{T}_{h}$ is called a mesh of $\Omega$. 
\end{definition}

\begin{lemma}
\label{lemma_mesh}
%There is $I\subset (0,1)$ satisfy (\ref{mesh_prop1},\ref{mesh_prop2},\ref{mesh_prop3}).
For any $\delta>0$, there is a polyhedra domain $P_{\delta}$ such that 
$\overline{\Omega_{\delta}}\subset P_{\delta} \subset \overline{P_{\delta}}\subset \overline{\Omega}$,
 { and all vertexes of } $P_{\delta} $ { are contained on } $\partial\Omega.$ Here $\Omega_{\delta}:=\{\bx\in \Omega: \text{dist}(\bx, \partial\Omega)> \delta\}$ for any $\delta>0$. 

\end{lemma}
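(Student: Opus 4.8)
The plan is to construct $P_\delta$ by an explicit local-to-global procedure, exploiting that $\overline{\Omega_\delta}$ is a compact set strictly inside the bounded open convex domain $\Omega$. First I would fix $\delta>0$ and observe that $\overline{\Omega_\delta}$ is compact and $\overline{\Omega_\delta}\subset\Omega$. For each boundary point $\ba\in\partial\Omega$, convexity gives a supporting hyperplane $H_\ba$; pushing it slightly inward (by an amount much smaller than $\delta$) yields an open half-space $H_\ba^-$ whose bounding hyperplane meets $\overline\Omega$ in a small ``cap'' and which still contains $\overline{\Omega_\delta}$. The complements of finitely many such inward-pushed half-spaces, intersected with $\overline\Omega$, will be shown to trap $\overline{\Omega_\delta}$; a compactness (finite subcover) argument on $\partial\Omega$ produces the finite family. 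The intersection of $\overline\Omega$ with finitely many half-spaces is a convex polytope $\widehat P_\delta$ with $\overline{\Omega_\delta}\subset\widehat P_\delta\subset\overline\Omega$, but its vertices need not lie on $\partial\Omega$.

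The second and more delicate step is to correct the vertex condition ``all vertexes of $P_\delta$ are contained on $\partial\Omega$.'' Here I would not intersect with $\overline\Omega$ at all; instead, for each point $\ba$ in a suitable finite subset $\{\ba_1,\dots,\ba_m\}\subset\partial\Omega$, I take the hyperplane through $\ba_j$ and build $P_\delta$ as the convex hull of a well-chosen finite set of points \emph{on} $\partial\Omega$. Concretely: choose the net $\{\ba_j\}$ on $\partial\Omega$ fine enough that, by strict-ish proximity and the definition of $\Omega_\delta$, the polytope $P_\delta:=\mathrm{conv}\{\ba_1,\dots,\ba_m\}$ already satisfies $\overline{\Omega_\delta}\subset\mathrm{Int}(P_\delta)$. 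Since every $\ba_j\in\partial\Omega$ and $\Omega$ is convex, automatically $P_\delta=\mathrm{conv}\{\ba_j\}\subset\overline\Omega$, and every vertex of $P_\delta$ is one of the $\ba_j$, hence on $\partial\Omega$. The containment $\overline{\Omega_\delta}\subset\mathrm{Int}(P_\delta)$ follows because each boundary point $\by\in\partial\Omega$ lies within distance $\delta/2$ (say) of some $\ba_j$ after refining the net, so the polytope's faces stay within an $O(\delta)$-collar of $\partial\Omega$ and therefore outside $\overline{\Omega_\delta}=\{\mathrm{dist}(\cdot,\partial\Omega)\ge\delta\}$; a uniform-continuity / compactness estimate makes this quantitative.

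The key quantitative lemma I expect to need is: there is $\delta'>0$ (depending on $\delta$ and the geometry of $\Omega$) such that if $N\subset\partial\Omega$ is a $\delta'$-net of $\partial\Omega$, then every point of $\overline\Omega$ at distance $\le\delta'$ from $\partial\Omega$ lies within distance $<\delta$ of $\mathrm{conv}(N)^c$ --- equivalently, $\mathrm{conv}(N)\supset\Omega_\delta$. This is where convexity is used essentially: for a convex body, chords between nearby boundary points stay close to the boundary, so $\partial\Omega$ is uniformly approximable by inscribed polytopes in Hausdorff distance, and the rate degrades gracefully. I would cite or prove this via the standard fact that $d_H(\Omega,\mathrm{conv}(N))\to 0$ as the mesh of $N$ tends to $0$, then quantify using the modulus of convexity of $\partial\Omega$ (boundedness suffices; no smoothness needed).

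\textbf{Main obstacle.} The genuine difficulty is producing vertices lying \emph{exactly} on $\partial\Omega$ while simultaneously enclosing $\overline{\Omega_\delta}$: the naive ``intersect half-spaces with $\overline\Omega$'' gives containment effortlessly but spoils the vertex condition, whereas ``take convex hull of boundary points'' fixes the vertex condition but requires the quantitative inscribed-polytope approximation above to guarantee enclosure. Balancing these --- i.e., choosing the net fine enough that $\mathrm{conv}(N)$ contains $\overline{\Omega_\delta}$ in its interior, uniformly over the compact boundary --- is the crux; everything else is routine compactness and convexity bookkeeping.
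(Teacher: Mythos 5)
Your second paragraph is in fact the same strategy the paper uses: pick a sufficiently fine finite set of boundary points and take their convex hull, then argue that $\overline{\Omega_\delta}$ must be enclosed. The paper makes the net explicit by covering $\mathbb{R}^d$ with a cubic grid of side $\epsilon\le\delta/(3\sqrt d)$, choosing one point $\bB_i\in\partial\Omega$ in each grid cube meeting $\partial\Omega$, and setting $P_\delta=\mathrm{conv}\{\bB_i\}$; this explicitness matters to the authors because immediately after the lemma they emphasize that the construction is algorithmic and feeds the triangulation. Where you differ is in how the enclosure $\overline{\Omega_\delta}\subset P_\delta$ is justified, and this is where your sketch is weaker than it needs to be. You propose to go through Hausdorff-distance convergence of inscribed polytopes and to ``quantify using the modulus of convexity.'' That is a detour and the modulus of convexity is a red herring: nothing about strict convexity or a modulus is needed, and in fact the result must hold for non-strictly-convex $\Omega$ (e.g.\ a simplex). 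The clean one-step argument---and exactly what the paper does---is by contradiction with a separating hyperplane: if $\bx_0\in\overline{\Omega_\delta}\setminus P_\delta$, separate $\bx_0$ from $P_\delta$ by a hyperplane through $\bx_0$ (after a rigid motion, $P_\delta\subset\{x^d<0\}$ and $\bx_0=\boldsymbol 0$); the ray from $\bx_0$ in the $+e_d$ direction meets $\partial\Omega$ at a point $\bx'$ with $x'^d=\mathrm{dist}(\bx_0,\bx')\ge\delta$; by the net/grid choice there is a selected boundary point $\bB_j$ with $|\bB_j-\bx'|\le\delta/3$, hence $x_j^d\ge 2\delta/3>0$, contradicting $\bB_j\in P_\delta\subset\{x^d<0\}$. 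This directly yields your ``key quantitative lemma'' (a $\delta'$-net with $\delta'\le\delta$ suffices) without passing through Hausdorff distance. So: right idea, right structure, but the justification of the crux should be the separating-hyperplane contradiction rather than $d_H\to 0$ plus a convexity modulus; the latter does not obviously close the containment without an argument of exactly this separating type anyway.
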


\begin{proof}
$\forall\ \epsilon>0$,  we define 
\begin{align*}
C_{\epsilon}:= \{(i_{1}\epsilon, (i_{1}+1)\epsilon ]\times \cdots \times (i_{d}\epsilon, (i_{d}+1)\epsilon ]: 
\forall i_{1},\cdots,i_{d}\in \mathbb{Z}\}.
\end{align*}
Obviously, $\mathbb{R}^{d}$ can be covered by cubes in $C_{\epsilon}$ without overlapping. 
For $0<\epsilon \leq \frac{1}{3\sqrt{d}}\delta$, there exist finitely many $\{\boldsymbol{C}_{i}\}_{i=1}^{m}\subset C_{\epsilon}$ such that $\boldsymbol{C}_i\cap \partial\Omega\neq \emptyset$ for any $i=1,\cdots, m$. 
Taking  any point, denoted by $\bB_{i}$,  in $\boldsymbol{C}_{i}\cap \partial\Omega$ for any $1\leq i\leq m$,  we define  $P_{\delta}$ to be the convex hull of $\{\bB_{i}\}_{i=1}^{m}$. Then all vertexes of $P_{\delta}$ 
stay on $\partial\Omega$ and $\overline{P_{\delta}}\subset \overline{\Omega}$ since $\Omega$ is convex. 

In the following, we shall show that $\overline{\Omega_{\delta}}\subset P_{\delta} $. In fact, if not, then there is a point $\bx_{0}\in \overline{\Omega_{\delta}}$ and $\bx_{0}\notin P_{\delta}$. 
Without loss of generality, we may assume that  
$$\bx_{0} = (0,\cdots, 0)\in \mathbb{R}^{d} \text{ and }  x^{d}<0 \text{ for any }(x^{1},\cdots,x^{d})=\bx\in P_{\delta}. 
$$
Since $\overline{\Omega_{\delta}}\subset \Omega$, there is $x^{d}>0$ such that the point 
$\bx^{\prime}:=(0,\cdots, 0, x^{d})\in \partial\Omega$. Due to the definition of $\Omega_{\delta}$, 
$\text{dist}(\bx_{0}, \bx^{\prime})= x^{d}\geq \delta$. 
Obviously, there is a cube $\boldsymbol{C}^{\prime}\in C_{\epsilon}$ such that $\bx^{\prime}\in \boldsymbol{C}^{\prime}$. 
Then there is some positive integer $j$  with $1\leq j\leq m$  such that   $\bB_{j}\in \boldsymbol{C}^{\prime}$ and $\text{dist}(\bx^{\prime}, \bB_{j})\leq \sqrt{d}\epsilon\leq  \delta/3$, which implies that 
\begin{align*}
x_{j}^{d}\ \geq \ {2}\delta/3\ >\ 0\text{ where } (x_{j}^{1},\cdots,x_{j}^{d})= \bB_{j}.
\end{align*}
This contradics with the fact that $\bB_j\in P_\delta$. Therefore $\overline{\Omega_{\delta}}\subset P_{\delta}$.
\end{proof}

According to Lemma~\ref{lemma_mesh} and standard triangulation for polyhedra, 
there is $I\subset (0,1)$ such that the following conditions are fulfilled:
\begin{subequations}
\label{mesh_props}
\begin{align}
\label{mesh_prop1}
& 0\text{ is the unique accumulation point of }I;\\
\label{mesh_prop2}
& \text{for any } h\in I, \text{ there is a mesh }\mathcal{T}_{h} \text{ of }\Omega;\\
\label{mesh_prop3}
& \text{for any }\delta >0, \text{ there is }h_{\delta}>0 \text{ such that }
\overline{\Omega_{\delta}}\subset \Omega_{h}\text{ if } h\in I \text{ and } h<h_{\delta}. 
\end{align}
\end{subequations}
In fact, the proof of Lemma~\ref{lemma_mesh} is constructive such that 
it naturally provides an algorithm to construct the convex polyhedra to approximate $\Omega$.

\subsection{The finite element method}\
\vspace{5pt}

For any given mesh $\mathcal{T}_{h}$ of $\Omega$, we denote  
the vertexes of $\mathcal{T}_{h}$ contained in the interior of $\Omega_{h}$ and 
 the vertexes of $\partial\Omega_{h}$ by $\{\bA_{i}\}_{i=1}^{k_{h}}$ and $\{\bB_{j}\}_{j=1}^{m_{h}}$, respectively, and we define $M_{h}(z_{1},\cdots,z_{k_{h}})$ to  be the convex hull of 
$\{(\bA_{i}, z_{i})\}_{i=1}^{k_{h}} \cup \{(\bB_{j}, g(\bB_{j}))\}_{j=1}^{m_{h}}$  in $\mathbb{R}^{d+1}$
for any real numbers $\{z_{i}\}_{i=1}^{k_{h}}$. We introduce  $K_{h}: = \{M_{h}(z_{1},\cdots, z_{k_{h}}): \forall z_{i}\in\mathbb{R}, 1\leq i\leq k_{h}\}$ and
\begin{align}
\label{def_H}
H_{h} := \{ v\in W^{+}(\Omega_{h})\cap C(\overline{\Omega_{h}}): \exists M_{h}\in K_{h} \text{ such that }
v(\bx) = \inf_{(\bx,z)\in M_{h}}z,\forall \bx \in \overline{\Omega_{h}} \}.
\end{align}

The  so-called finite element method is to find $u_{h}\in H_{h}$ such that 
\begin{align}
\label{fem_eqs}
\int_{\partial u_{h}(\bA_{i})}R(\bp) d\bp = \int_{\Omega_{h}}\phi_{i,h}d\mu,\quad \forall 1\leq i\leq k_{h},
\end{align} 
where , for any $1\leq i\leq k_{h}$, $\phi_{i,h}\in C_{c}(\Omega_{h})\cap \mathbb{P}_{1}(\mathcal{T}_{h})$  with the conditions:
\begin{align}
\label{fem_node_func}
\phi_{i,h}(\bA_{j}) = \delta_{ij},\quad\forall 1\leq j\leq k_{h}
\end{align} and $\mathbb{P}_{1}(\mathcal{T}_{h})$ is defined to be the set of piecewise linear functions on $\mathcal{T}_h$.

\begin{definition}
A domain $\Omega\subset \mathbb{R}^{d}$ is called strictly convex if for any $\bx,\bx^{\prime}\in \overline{\Omega}$, 
\begin{align*}
\lambda \bx + (1-\lambda)\bx^{\prime}\in \Omega,\quad \forall 0< \lambda < 1.
\end{align*} 
\end{definition}

\begin{lemma}
\label{lemma_H_B}
 Assume that $\Omega$ is strictly convex and  $\mathcal{T}_{h}$ is a mesh of $\Omega$. Let
$\{\bA_{i}\}_{i=1}^{k_{h}}$ and $\{\bB_{j}\}_{j=1}^{m_{h}}$ are the vertexes of $\mathcal{T}_{h}$ contained in the interior of 
$\Omega_{h}$ and the vertexes of $\partial\Omega_{h}$, respectively. Then there hold:
\begin{subequations}
\label{H_B_props}
\begin{align}
\label{H_exist}
& H_{h}\ \neq \ \emptyset;\\
\label{H_B}
& v(\bB_{j}) = g(\bB_{j}),\quad \forall v\in H_{h}  \ \text{and} \ 1\leq j\leq m_{h};\\
\label{H_boundary}
& w = v\text{ on }\partial\Omega_{h},\quad \forall w,v\in H_{h}.
\end{align}
\end{subequations}
\end{lemma}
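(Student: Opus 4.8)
The plan is to prove the three assertions of Lemma~\ref{lemma_H_B} in the order \eqref{H_exist}, \eqref{H_B}, \eqref{H_boundary}, since the latter two rely on understanding which lower envelopes $M_h \in K_h$ actually belong to $H_h$. For \eqref{H_exist}, I would exhibit an explicit member of $H_h$: take any affine function $\ell$ on $\mathbb{R}^d$ whose graph lies below all the boundary data points $(\bB_j, g(\bB_j))$ (such $\ell$ exists since there are finitely many $\bB_j$), choose the heights $z_i := \ell(\bA_i)$ for the interior vertices, and consider $M_h(z_1,\dots,z_{k_h})$. The function $v(\bx) = \inf_{(\bx,z)\in M_h} z$ is then convex (it is the lower boundary of a convex polytope), continuous on $\overline{\Omega_h}$, and lies in $W^+(\Omega_h)\cap C(\overline{\Omega_h})$; hence $v \in H_h$. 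I would check that $v$ is well defined, i.e. that the vertical line through each $\bx\in\overline{\Omega_h}$ meets $M_h$, which holds because $\overline{\Omega_h}$ is the projection of the convex hull of the $(\bA_i,z_i)$ and $(\bB_j,g(\bB_j))$ onto $\mathbb{R}^d$ (using \eqref{def_mesh3}--\eqref{def_mesh4}, all generating points project into $\overline{\Omega_h}$ and their hull covers it).

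For \eqref{H_B}, fix $v \in H_h$ with associated $M_h \in K_h$ and fix a boundary vertex $\bB_j$. By construction $(\bB_j, g(\bB_j)) \in M_h$, so $v(\bB_j) \le g(\bB_j)$. For the reverse inequality I would use strict convexity of $\Omega$: since $\bB_j \in \partial\Omega$ and $\Omega$ is strictly convex, $\bB_j$ is an \emph{extreme point} of $\overline{\Omega}$, hence also an extreme point of $\overline{\Omega_h} \subset \overline{\Omega}$ (all vertices of $\partial\Omega_h$ lie on $\partial\Omega$ by \eqref{def_mesh4}). Therefore $\bB_j$ cannot be written as a nontrivial convex combination of other points of $\overline{\Omega_h}$, and in particular the only generating points of $M_h$ lying on the vertical line $\{\bx = \bB_j\}$ whose projection is an extreme point is $(\bB_j, g(\bB_j))$ itself; any point of $M_h$ on that vertical line is a convex combination of generators, and since $\bB_j$ is extreme the horizontal components force all the weight onto generators projecting to $\bB_j$, of which $(\bB_j, g(\bB_j))$ is the only one (the $\bA_i$ are interior, the other $\bB_{j'}$ are distinct vertices). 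Hence $M_h \cap \{\bx = \bB_j\} = \{(\bB_j, g(\bB_j))\}$ and so $v(\bB_j) = g(\bB_j)$.

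For \eqref{H_boundary}, let $w, v \in H_h$. By the same extreme-point argument, for \emph{every} $\bx \in \partial\Omega_h$ the vertical line $\{\bx\}\times\mathbb{R}$ meets $M_h$ in exactly one point: indeed a point $\bx \in \partial\Omega_h$ lies on the boundary of the polytope $\overline{\Omega_h}$, and writing it as a convex combination of the vertices $\bA_i, \bB_j$ only the boundary vertices $\bB_j$ of the relevant face can appear with positive weight; the value $\inf_{(\bx,z)\in M_h} z$ is then the corresponding convex combination of the $g(\bB_j)$, which depends only on $\bx$ and the common boundary data $g$, not on the interior heights $z_i$. Thus $w(\bx) = v(\bx)$ for all $\bx \in \partial\Omega_h$. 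I would phrase this cleanly by showing that for $\bx\in\partial\Omega_h$ one has $v(\bx) = \sum_k \theta_k\, g(\bB_{j_k})$ where $\sum_k \theta_k \bB_{j_k} = \bx$ is the (unique, up to the affine structure of the face) representation of $\bx$ on the face of $\partial\Omega_h$ containing it, independently of $M_h$.

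The main obstacle is the careful handling of the vertical-fiber/extreme-point argument underlying \eqref{H_B} and \eqref{H_boundary}: one must rule out that a point of $M_h$ above a boundary point $\bx$ receives positive weight from an interior generator $(\bA_i, z_i)$ with $\bA_i \notin \partial\Omega_h$, which is exactly where strict convexity of $\Omega$ (forcing $\partial\Omega_h$-vertices to be extreme in $\overline{\Omega_h}$, and more generally forcing each face of $\partial\Omega_h$ to be "flat" against $\partial\Omega$) is used. The affine-combination bookkeeping on faces of the polytope $\overline{\Omega_h}$ is routine but needs to be stated precisely; everything else is a direct consequence of the definition of $H_h$ in \eqref{def_H}.
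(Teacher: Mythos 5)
Your proof is correct and follows essentially the same approach as the paper: exhibit an explicit $M_h$ to show $H_h\neq\emptyset$, use strict convexity of $\Omega$ to conclude each $\bB_j$ is an extreme point of $\overline{\Omega_h}$ (so the vertical fiber of $M_h$ over $\bB_j$ collapses to the single generator $(\bB_j,g(\bB_j))$), and then observe that on $\partial\Omega_h$ the interior generators $(\bA_i,z_i)$ carry zero weight so the lower envelope there is determined solely by the boundary data. The paper's proof of (\ref{H_boundary}) is a one-line deduction from (\ref{H_B}) and the definition of $H_h$; your unpacking of the face/zero-weight bookkeeping is exactly the detail the paper leaves implicit (one small nit: the convex representation of $\bx$ on a face of $\partial\Omega_h$ in terms of the $\bB_j$ need not be unique when the face is not a simplex, but since the infimum defining $v(\bx)$ ranges over all such representations and none involve the $z_i$, the conclusion still holds).
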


\begin{proof}
For any $1\leq i\leq k_{h}$, we take
\begin{align*}
z_{i} = \big( \max_{1\leq j\leq m_{h}}g(\bB_{j}) \big) +1.
\end{align*}
Then $M_{h}:=M_{h}(z_{1},\cdots,z_{k_{h}})$ is the convex hull of $\{\bB_{j}\}_{j=1}^{m_{h}}$. We define
\begin{align*}
v(\bx) = \inf_{(\bx,z)\in M_{h}}z,\quad \forall \bx \in \overline{\Omega_{h}}. 
\end{align*}
Obviously, $v\in W^{+}(\Omega_{h})\cap C(\overline{\Omega_{h}})$. Thus $v\in H_{h}$ , which
shows that $H_{h}\neq \emptyset$.

Since $\Omega$ is strictly convex and (\ref{def_mesh4}) holds true for any $1\leq j \leq m_{h}$, 
$\bB_{j}$ is not contained in the convex hull of $\{(\bA_{i}, z_{i})\}_{i=1}^{k_{h}} \cup \{(\bB_{l}, 
g(\bB_{l}))\}_{l=1,l\neq j}^{m_{h}}$. Then we obtain (\ref{H_B}). Finally from (\ref{H_B}) and  (\ref{def_H}),  the statement (\ref{H_boundary}) holds true.
\end{proof}

\begin{assumption}
\label{assmp_source_integral}
We assume that 
\begin{align*}
\int_{\Omega} d\mu < \int_{\mathbb{R}^{d}}R(\bp)d\bp.
\end{align*}
\end{assumption}

\begin{theorem}
\label{thm_fem_unique}
Let $\mathcal{T}_{h}$ be a mesh of $\Omega$. 
We assume that $\Omega$ is strictly convex and Assumption~\ref{assmp_source_integral} holds. 
Then, the finite element method (\ref{fem_eqs}) has a unique solution. 
\end{theorem}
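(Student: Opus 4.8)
The plan is to set up a finite-dimensional fixed-point / degree-theoretic argument for existence and then use the comparison principle (Theorem~\ref{thm_comparison}) for uniqueness. First I would parametrize $H_h$: by Lemma~\ref{lemma_H_B}, every $v\in H_h$ is the lower convex envelope of $\{(\bA_i,z_i)\}\cup\{(\bB_j,g(\bB_j))\}$, so $v$ is determined by $(z_1,\dots,z_{k_h})\in\mathbb{R}^{k_h}$, but the map $(z_i)\mapsto v$ is not injective (raising a $z_i$ above the envelope changes nothing). The right state variable is therefore the vector of \emph{nodal values} $(v(\bA_i))_{i=1}^{k_h}$; I would show that $H_h$ is in bijection with the closed convex set $\Lambda_h\subset\mathbb{R}^{k_h}$ of vectors that are ``convex with respect to $\mathcal{T}_h$ and compatible with the boundary data $g(\bB_j)$'', i.e. $v_i \le$ the value at $\bA_i$ of the lower envelope of the remaining data. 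Define $F:\Lambda_h\to\mathbb{R}^{k_h}$ by $F_i(v) = \int_{\partial u_h(\bA_i)}R(\bp)\,d\bp$ where $u_h\in H_h$ is the envelope with nodal values $v$, and define $G_i = \int_{\Omega_h}\phi_{i,h}\,d\mu$. The finite element method is exactly $F(v)=G$.

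Next I would establish the structural properties of $F$. The key facts are: (i) $F$ is continuous on $\Lambda_h$ — this follows from stability of the subdifferential image of a point under uniform perturbation of a convex function (the same Lemma~$1.2.2$ of \cite{Gutierrez01} used in Lemma~\ref{lemma_measure_weak_converge}) together with $R\in L^1_{\mathrm{loc}}$; (ii) $\sum_{i=1}^{k_h}F_i(v) = \int_{\partial u_h(\overline{\Omega_h})}R(\bp)\,d\bp$ modulo overlaps of measure zero, and this total is bounded above by $\int_{\mathbb{R}^d}R(\bp)\,d\bp$, while $\sum_i G_i = \int_{\Omega_h}(\sum_i\phi_{i,h})\,d\mu \le \int_\Omega d\mu$, so Assumption~\ref{assmp_source_integral} gives the crucial strict inequality $\sum_i G_i < \int_{\mathbb{R}^d}R(\bp)\,d\bp$ that leaves ``room'' for a solution; (iii) monotonicity: lowering $v_i$ (pushing node $i$ down) enlarges $\partial u_h(\bA_i)$ and can only shrink $\partial u_h(\bA_j)$ for $j\neq i$, so $F$ is (weakly) monotone in the appropriate componentwise sense; (iv) a coercivity/a priori bound — if some $v_i\to-\infty$ then $F_i(v)\to\int_{\mathbb{R}^d}R\,d\bp$, which combined with (ii) forces $F_i(v) > G_i$ eventually, so solutions stay in a bounded box $\prod_i[m_i,M_i]\subset\Lambda_h$ where the upper bound $M_i$ comes from the envelope constraint and the lower bound $m_i$ from (iv).

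For existence I would then run a Brouwer-degree or Perron-type argument on the compact convex box $B=\prod_i[m_i,M_i]\cap\Lambda_h$: either show the degree of $v\mapsto F(v)-G$ is nonzero using the homotopy $t\mapsto F(v)-tG-(1-t)F(v^0)$ for a reference $v^0$ and the no-solutions-on-the-boundary estimate from (iv), or — cleaner given the monotone structure — define $u_h$ as the infimum of all $w\in H_h$ satisfying $\int_{\partial w(\bA_i)}R\,d\bp \le G_i$ for all $i$ (this family is nonempty by taking $w$ with all nodal values very low, using (iv) and Assumption~\ref{assmp_source_integral}), show the infimum is attained in $H_h$ by the continuity/monotonicity above, and show it is a solution by a standard ``if $F_i(u_h)<G_i$ then we may push node $i$ down a little, contradicting minimality'' argument, using monotonicity (iii) to see this push does not violate the constraints at the other nodes. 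For uniqueness, given two solutions $u_h, u_h'\in H_h$, they agree on $\partial\Omega_h$ by \eqref{H_boundary} and their Monge--Ampère measures agree at every interior node, hence (since the measures are supported on nodes — the subdifferential of a piecewise-linear-graph convex function charges only vertices) they have equal Monge--Ampère measures on all of $\Omega_h$; applying Theorem~\ref{thm_comparison} on $\Omega_h$ in both directions gives $u_h\equiv u_h'$.

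The main obstacle I expect is making precise and exploiting the monotonicity property (iii) together with the constraint geometry of $\Lambda_h$: pushing one nodal value down can change which faces of the convex envelope are active, so the dependence of $\partial u_h(\bA_j)$ on the nodal vector is only piecewise smooth, and one must argue carefully (via the envelope characterization in \eqref{def_H}) that $F_i$ is nondecreasing as $v_i$ decreases with the other coordinates fixed and that the feasible set stays closed under such moves; relatedly, verifying that no Monge--Ampère mass ``escapes'' to the boundary vertices $\bB_j$ in a way that breaks the counting identity (ii) requires the strict convexity of $\Omega$ and \eqref{def_mesh4} exactly as in Lemma~\ref{lemma_H_B}. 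Once these combinatorial-geometric facts are nailed down, the existence and uniqueness both follow from soft arguments plus Theorem~\ref{thm_comparison}.
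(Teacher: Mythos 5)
Your Perron-type existence argument is, in essence, the same route the paper takes: the paper delegates to Bakelman's Theorem~11.1 after replacing the family $H$ there by exactly the set $\{v\in H_{h}: \int_{\partial v(\bA_{i})}R(\bp )\,d\bp \leq \int_{\Omega_{h}}\phi_{i,h}\,d\mu\ \forall i\}$ you describe, with Assumption~\ref{assmp_source_integral} supplying the strict inequality $\int_{\Omega}d\mu < \int_{\mathbb{R}^{d}}R\,d\bp$ that you use in item (ii); and uniqueness via Theorem~\ref{thm_comparison}, together with the observation that the Monge--Amp\`ere measure of a piecewise-linear convex function is purely atomic at the vertices $\bA_i$, is the right mechanism.

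There is, however, a sign error in your non-emptiness claim that would derail the Perron argument as written. You assert the family $\{w\in H_h: F_i(w)\le G_i\ \forall i\}$ is nonempty ``by taking $w$ with all nodal values very low,'' invoking (iv) and Assumption~\ref{assmp_source_integral}. But (iv) shows the opposite: lowering $z_i$ \emph{enlarges} $\partial u_h(\bA_i)$, so $F_i(w)\to\int_{\mathbb{R}^d}R\,d\bp$, which by Assumption~\ref{assmp_source_integral} is $>G_i$; pushing nodal values down eventually \emph{violates} the constraints rather than satisfying them. The correct non-emptiness witness is the opposite extreme, exactly the one used in the proof of Lemma~\ref{lemma_H_B}: take $z_i=\max_j g(\bB_j)+1$ for all $i$, so that $M_h$ is the convex hull of the boundary data alone; then the lower envelope $v$ has all its vertices among $\{(\bB_j,g(\bB_j))\}$, its Monge--Amp\`ere measure on $\Omega_h$ vanishes, and $F_i(v)=0\le G_i$ for every $i$. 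With this repair, and with the (correct) monotonicity (iii) used to show that the feasible set is closed under componentwise minima so that the infimum is attained in $H_h$, the rest of your Perron argument and the uniqueness step are sound. The degree-theoretic variant you sketch is a genuinely different route and would also plausibly work, but it is less developed in your write-up and not what the paper (via Bakelman) actually does.
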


\begin{proof} By Assumption~\ref{assmp_source_integral}, we know that 
\begin{align}
\label{discrete_source_integral}
\displaystyle\sum_{i=1}^{k_{h}}\int_{\Omega_{h}}\phi_{i,h}d\mu= \int_{\Omega_{h}}\big(\sum_{i=1}^{k_{h}}\phi_{i,h}\big)d\mu 
\leq \int_{\Omega_{h}}d\mu \leq \int_{\Omega}d\mu < \int_{\mathbb{R}^{d}}R(\bp)\ d\bp.
\end{align}

Now we replace the set $H$ in the proof of \cite[Theorem~$11.1$]{Bakelman94} by
\begin{align*}
\{v\in H_{h}: \int_{\partial v(\bA_{i})}R(\bp )\ d\bp \leq \int_{\Omega_{h}}\phi_{i,h}d\mu,\quad \forall 1\leq i\leq k_{h}\},
\end{align*}
where  $H_{h}$ is introduced  in (\ref{def_H}). 
By (\ref{H_B_props}, \ref{discrete_source_integral}), the proof 
of  \cite[Theorem~$11.1$]{Bakelman94} can go through, such that we can conclude that 
the finite element method (\ref{fem_eqs}) has a unique solution.
\end{proof}

\vspace{20pt}

\section{Convergence of the finite element method to (1.1)}

In this section, we show that under suitable assumptions, (\ref{ma_measure_eqs}) is well-posed and 
the solutions of the finite element method (\ref{fem_eqs}) converges to the exact solution. 
Theorem~\ref{thm_converge1} is the main result. 
\subsection{Convergence of border of solutions of the finite element method}

\

Before we prove the convergence of solutions of the finite element method (\ref{fem_eqs}), we firstly give Lemma 6.1 and 6.2, which 
show the convergence of border of finite element solutions.

\begin{lemma}
\label{lemma_diam_vanish}
Let $I\subset (0,1)$ satisfy (\ref{mesh_prop1},\ref{mesh_prop2},\ref{mesh_prop3}) and  $\Sigma_{h}$ be the set of all $(d-1)$-dimensional closed polyhedra on $\partial\Omega_{h}$. 
If $\Omega$ is strictly convex, then 
\begin{align*}
\lim_{I\ni h\rightarrow 0}\sup_{\boldsymbol{K}\in \Sigma_{h}}\big( \sup_{\bx,\bx^{\prime}\in \boldsymbol{K}}
\vert \bx - \bx^{\prime} \vert \big) \ =\  0.
\end{align*}
\end{lemma}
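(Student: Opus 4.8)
The goal is to show that the polyhedral faces on $\partial\Omega_h$ shrink to zero in diameter uniformly as $h\to 0$ through the set $I$, under the strict convexity of $\Omega$. The plan is to argue by contradiction: suppose the $\limsup$ is some $c > 0$. Then there is a sequence $h_n \in I$ with $h_n \to 0$ and a sequence of faces $\boldsymbol{K}_n \in \Sigma_{h_n}$ together with points $\bx_n, \bx'_n \in \boldsymbol{K}_n$ with $\vert \bx_n - \bx'_n\vert \geq c/2 > 0$. Since $\partial\Omega_h$ is compact (all its vertices lie on $\partial\Omega$ by (\ref{def_mesh4}), and $\overline{\Omega_h}\subset\overline{\Omega}$), after passing to a subsequence we may assume $\bx_n \to \bx_\ast$ and $\bx'_n \to \bx'_\ast$ with $\bx_\ast, \bx'_\ast \in \partial\Omega$ and $\vert \bx_\ast - \bx'_\ast\vert \geq c/2$, so in particular $\bx_\ast \neq \bx'_\ast$.

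The key geometric point is that each $\boldsymbol{K}_n$, being a face of the convex polyhedron $\Omega_{h_n}$ whose vertices sit on $\partial\Omega$, lies in a supporting hyperplane of $\Omega_{h_n}$; and since $\Omega_{h_n} \subset \Omega$ (by (\ref{def_mesh3}), (\ref{def_mesh4}) and convexity of $\Omega$), the whole segment $\overline{\bx_n \bx'_n}$ lies in $\overline{\Omega_{h_n}} \subset \overline\Omega$ but also on the boundary of a half-space containing $\Omega_{h_n}$. The next step I would take is to pass to the limit in these supporting hyperplanes: extract a convergent subsequence of the unit normals of the supporting hyperplanes of $\Omega_{h_n}$ along $\boldsymbol{K}_n$, obtaining a limit hyperplane $\Pi$ through $\bx_\ast$ with normal $\vn_\ast$. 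Using the third property (\ref{mesh_prop3}) — that $\overline{\Omega_\delta}\subset\Omega_{h_n}$ for $h_n$ small — I would check that $\Pi$ is in fact a supporting hyperplane of $\Omega$ itself: any point of $\Omega$ lies in some $\overline{\Omega_\delta}$, hence eventually in $\Omega_{h_n}$, hence on the correct side of the approximating hyperplanes, and this side-condition passes to the limit.

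Now the contradiction: the limiting segment $\overline{\bx_\ast \bx'_\ast}$ lies in $\overline\Omega$ (as a limit of segments in $\overline{\Omega_{h_n}}\subset\overline\Omega$) and also lies in the supporting hyperplane $\Pi$ of $\Omega$, hence $\overline{\bx_\ast\bx'_\ast}\subset \Pi\cap\overline\Omega \subset \partial\Omega$. But taking the midpoint $\bm = \tfrac12(\bx_\ast + \bx'_\ast)$, strict convexity of $\Omega$ (Definition before Lemma~\ref{lemma_H_B}) forces $\bm \in \Omega$, since $\bx_\ast, \bx'_\ast\in\overline\Omega$ and $\bx_\ast\neq\bx'_\ast$. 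This contradicts $\bm \in \partial\Omega$. Hence no such $c>0$ exists and the $\limsup$ is zero, which is the claim.

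**Main obstacle.** The delicate step is the passage to the limit of the supporting hyperplanes and verifying that the limit hyperplane genuinely supports $\Omega$ (not just the $\Omega_{h_n}$): one must combine the inner approximation $\overline{\Omega_\delta}\subset\Omega_{h_n}$ from (\ref{mesh_prop3}) with the outer containment $\Omega_{h_n}\subset\Omega$ to pin the limit hyperplane down, and handle the normalization/compactness of normals carefully. Everything else — compactness of $\partial\Omega_h$, extracting convergent subsequences, and the final strict-convexity contradiction — is routine. An alternative, possibly cleaner, route avoids hyperplanes: since $\overline{\bx_n\bx'_n}\subset\partial\Omega_{h_n}$ means the open segment avoids $\Omega_{h_n}$, the midpoints $\bm_n$ satisfy $\mathrm{dist}(\bm_n,\Omega_{h_n}) = 0$ while $\bm_n\notin\Omega_{h_n}$; combined with (\ref{mesh_prop3}) this forces $\mathrm{dist}(\bm_n,\partial\Omega)\to 0$, so $\bm_\ast\in\partial\Omega$, again contradicting strict convexity applied to $\bx_\ast\neq\bx'_\ast$ in $\overline\Omega$. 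I would likely present this second version as the main argument since it sidesteps the hyperplane compactness entirely.
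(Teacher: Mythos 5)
Your second (preferred) argument — contradiction via the midpoint $\bm_n\in\boldsymbol{K}_n\subset\partial\Omega_{h_n}$, strict convexity, and (\ref{mesh_prop3}) — is correct and essentially the same as the paper's proof, which picks the two extremal vertexes of $\boldsymbol{K}_n$ (automatically on $\partial\Omega$ by (\ref{def_mesh4})) and then shows via (\ref{mesh_prop3}) that their midpoint must eventually lie in $\Omega_{h_n}$, contradicting convexity of $\boldsymbol{K}_n\subset\partial\Omega_{h_n}$. One minor imprecision: the conclusion $\bx_\ast,\bx'_\ast\in\partial\Omega$ does not follow merely from compactness of $\partial\Omega_h$ (arbitrary points of a face $\boldsymbol{K}_n$ need not lie on $\partial\Omega$, only its vertexes do), but your argument only actually uses $\bx_\ast,\bx'_\ast\in\overline{\Omega}$ with $\bx_\ast\neq\bx'_\ast$, so nothing breaks.
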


\begin{proof}
We prove it by contradiction. If Lemma~{\ref{lemma_diam_vanish}} does not hold true, then  there is $\{h_{n}\}_{n=1}^{+\infty}\subset I$ such that $\displaystyle\lim_{n\rightarrow +\infty}h_{n}=0$
and for any $n\in \mathbb{N}$, there exists some $\boldsymbol{K}_{n}\in \Sigma_{h_{n}}$ such that the following condition holds true:
\begin{align*}
\sup_{\bx,\bx^{\prime}\in \boldsymbol{K}_{n}}\vert \bx - \bx^{\prime} \vert \geq \epsilon_{0}
\end{align*}for some positive constant $\epsilon_{0}$.  For any $n\in \mathbb{N}$, 
since $\boldsymbol{K}_{n}$ is compact, then there are two vertexes $\bx_{n}^{\prime}, \bx_{n}^{\prime \prime}$ of 
$\boldsymbol{K}_{n}$ such that 
\begin{align*}
\vert \bx_{n}^{\prime} - \bx_{n}^{\prime \prime}\vert = \sup_{\bx,\bx^{\prime}\in \boldsymbol{K}_{n}}
\vert \bx - \bx^{\prime} \vert \geq \epsilon_{0}.
\end{align*}
By (\ref{def_mesh4}), $\bx_{n}^{\prime}, \bx_{n}^{\prime \prime} \in \partial\Omega$, for any $n\in \mathbb{N}$.
Without loss of generality (we always can take a subsequence of $\{h_{n}\}_{n=1}^{+\infty}$ if necessary), we have that 
\begin{align*}
\lim_{n\rightarrow +\infty}\bx_{n}^{\prime} = \bar{\bx}^{\prime}\in \partial \Omega \quad \text{and}\ 
\lim_{n\rightarrow +\infty}\bx_{n}^{\prime \prime} = \bar{\bx}^{\prime \prime}\in \partial \Omega.
\end{align*} 
Then by the latest two estimates above, we can see that $\vert \bar{\bx}^{\prime} - \bar{\bx}^{\prime \prime} \vert\geq \epsilon_{0}>0$. 
Since $\Omega$ is strictly convex, then
$
\lambda \bar{\bx}^{\prime} + (1-\lambda)\bar{\bx}^{\prime \prime} \in \Omega, \ \forall \ 0< \lambda < 1.
$
By the definition of $\Omega_{\delta}$, we can choose $\delta>0$ small enough such that 
$
\lambda \bar{\bx}^{\prime} + (1-\lambda)\bar{\bx}^{\prime \prime} \in \text{Int}(\Omega_{\delta}),
$ for all $1/3< \lambda < {2}/{3}.$
Then, by (\ref{mesh_prop3}), we get that
$\big(\bx_{n}^{\prime} + \bx_{n}^{\prime \prime}\big)/2\in \text{Int}(\Omega_{\delta})\subset \Omega_{h_{n}}
\text{ if }n \text{ is large enough}$, which arrives at a contradiction since $\bx_{n}^{\prime}, \bx_{n}^{\prime \prime}$ are two vertexes of $\boldsymbol{K}_{n}$ and $\boldsymbol{K}_{n}\in \Sigma_h\subset\partial \Omega_h$ is convex.
\end{proof}

\begin{lemma}
\label{lemma_border_converge}
Let $I\subset (0,1)$ satisfy (\ref{mesh_prop1},\ref{mesh_prop2},\ref{mesh_prop3}) and  $\Omega$ be strictly convex.  We define 
\begin{align*}
& \bS_{0}:=\{(\bx,g(\bx)): \forall \bx \in \partial\Omega\} \text{and} \  \bS_{h}:=\{(\bx, v(\bx)):\forall \bx \in \partial\Omega_{h}\}, \forall v\in H_{h}.
\end{align*}
Then
$\bS_{h}$ is independent of the choice of $v\in H_{h}$  and it is a $(d-1)$-dimensional surface homeomorphic to the $(d-1)$-unit sphere. Furthermore, we have  that $\lim_{I\ni h\rightarrow 0}^{T}\bS_{h} = \bS_{0}$.
\end{lemma}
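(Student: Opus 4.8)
The plan is to break the statement into its three assertions and treat them in order, leaning on strict convexity of $\Omega$ throughout. First, I would establish that $\bS_h$ does not depend on the choice of $v\in H_h$: this is immediate from \eqref{H_boundary} in Lemma~\ref{lemma_H_B}, since any two elements of $H_h$ agree on $\partial\Omega_h$, hence have the same graph over $\partial\Omega_h$. Second, for the topological structure: by \eqref{def_mesh3} the set $\Omega_h$ is a convex polyhedral domain, so $\partial\Omega_h$ is homeomorphic to the $(d-1)$-sphere via radial projection from an interior point; composing with the continuous graph map $\bx\mapsto(\bx,v(\bx))$ (recall $v\in C(\overline{\Omega_h})$ by \eqref{def_H}), which is a homeomorphism onto its image since it has continuous inverse given by projection onto the first $d$ coordinates, gives that $\bS_h$ is a $(d-1)$-surface homeomorphic to $S^{d-1}$.

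The substance is the topological-limit claim $\lim^T_{I\ni h\to 0}\bS_h=\bS_0$, which requires showing both $\overline{\lim}^T\,\bS_h\subset\bS_0$ and $\underline{\lim}^T\,\bS_h\supset\bS_0$ (the first then upgrades the inferior-limit inclusion to equality since $\underline{\lim}^T\subset\overline{\lim}^T$ always). For the superior inclusion, suppose $\bx_{h_n}\in\partial\Omega_{h_n}$ with $(\bx_{h_n},v_{h_n}(\bx_{h_n}))\to(\bx_*,y_*)$ along some sequence $h_n\to0$. By \eqref{def_mesh4} every vertex of $\partial\Omega_{h_n}$ lies on $\partial\Omega$, and each $\bx_{h_n}$ lies on some $(d-1)$-dimensional face $\boldsymbol{K}_n\in\Sigma_{h_n}$; writing $\bx_{h_n}$ as a convex combination of the vertices of $\boldsymbol{K}_n$, all of which lie on $\partial\Omega$, Lemma~\ref{lemma_diam_vanish} forces $\mathrm{dist}(\bx_{h_n},\partial\Omega)\to0$, so $\bx_*\in\partial\Omega$. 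Now $v_{h_n}$ restricted to $\boldsymbol{K}_n$ is the convex function on $\boldsymbol{K}_n$ whose values at the vertices $\bB_j$ of that face are $g(\bB_j)$; by convexity $v_{h_n}(\bx_{h_n})$ is a convex combination of the $g(\bB_j)$, hence $v_{h_n}(\bx_{h_n})\to g(\bx_*)$ by continuity of $g$ (here I use that the vertices of $\boldsymbol{K}_n$ converge to $\bx_*$ by Lemma~\ref{lemma_diam_vanish}). Thus $(\bx_*,y_*)=(\bx_*,g(\bx_*))\in\bS_0$.

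For the inferior inclusion, fix $\bx_0\in\partial\Omega$; I must produce $\bx_h\in\partial\Omega_h$ with $(\bx_h,v(\bx_h))\to(\bx_0,g(\bx_0))$ as $I\ni h\to0$. Using \eqref{def_mesh4} and the covering construction in the proof of Lemma~\ref{lemma_mesh} (which I may invoke since $I$ was obtained from that construction), for each small $h$ there is a vertex $\bB_{j(h)}$ of $\partial\Omega_h$ with $\bB_{j(h)}\to\bx_0$; take $\bx_h:=\bB_{j(h)}$, so $v(\bx_h)=v(\bB_{j(h)})=g(\bB_{j(h)})\to g(\bx_0)$ by \eqref{H_B} and continuity of $g$. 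This shows $(\bx_0,g(\bx_0))\in\underline{\lim}^T_{I\ni h\to0}\bS_h$, completing the argument. The main obstacle I anticipate is making the superior-limit step fully rigorous: one must argue carefully that the \emph{diameter} of the face $\boldsymbol{K}_n$ containing $\bx_{h_n}$ vanishes — this is exactly what Lemma~\ref{lemma_diam_vanish} supplies — and that consequently the boundary values $g(\bB_j)$ at all vertices of $\boldsymbol{K}_n$, and hence the convex combination giving $v_{h_n}(\bx_{h_n})$, all converge to $g(\bx_*)$; everything else is bookkeeping with convex combinations and the definition of the topological limit.
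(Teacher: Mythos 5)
Your proof follows the same route as the paper's: independence from (\ref{H_boundary}); the inferior inclusion by sending vertices of $\partial\Omega_h$ (which lie on $\partial\Omega$ by (\ref{def_mesh4})) to the target point $\bx_0$; and the superior inclusion via Lemma~\ref{lemma_diam_vanish} combined with the observation that over a boundary face $\boldsymbol{K}_n$ the graph of any $v\in H_{h_n}$ coincides with the simplex spanned by $\{(\bB_{i,n},g(\bB_{i,n}))\}$, so that $v(\bx_n)$ equals the barycentric combination of the $g(\bB_{i,n})$, all of which tend to $g(\bx_0)$. One small caution: that last equality is not a consequence of convexity alone, which would only give $v(\bx_n)\le\sum_i\lambda_{i,n}g(\bB_{i,n})$; you need the extreme--face property of $\boldsymbol{K}_n\subset\partial\Omega_h$, ensuring the vertical slab over $\boldsymbol{K}_n$ meets $M_{h_n}$ exactly in the convex hull of the boundary nodes over $\boldsymbol{K}_n$ so that $v$ is affine on $\boldsymbol{K}_n$ --- a point the paper also leaves implicit when it says the equality ``follows from the definitions.''
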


\begin{proof}
According to (\ref{H_boundary}),  it is easy to see that $\bS_{h}$ is independent of the choice of $v\in H_{h}$ and it is a $(d-1)$-dimensional surface homeomorphic to the $(d-1)$-unit sphere. 

In the following, we prove that
$\lim_{I\ni h\rightarrow 0}^{T}\bS_{h} = \bS_{0}.$ Firstly we define a function $g_{h}: \partial\Omega_{h}\longrightarrow \mathbb{R}$ by 
$(\bx, g_{h}(\bx))\in \bS_{h},\quad \forall \bx \in \partial\Omega_{h}.$ We take $\bx_0\in \partial\Omega$ arbitrarily, and for any $h\in I$, we define
$\bx_{h}$ to be a vertex on $\partial\Omega_{h}$ which reaches 
the shortest distance between $\bx_0$ and all vertexes ($\{\bB_{j}\}_{j=1}^{m_{h}}$) on $\partial\Omega_{h}$. 
Obviously, $\displaystyle\lim_{I\ni h\rightarrow 0}\bx_{h} = \bx_0$. 
Since $g\in C(\partial\Omega)$, $\displaystyle\lim_{I\ni h\rightarrow 0}(\bx_{h},g(\bx_{h})) = (\bx,g(\bx))$, which implies that 
\begin{align*}
\bS_{0} \subset \underline{\lim}^{T}_{I\ni h\rightarrow 0}\bS_{h}.
\end{align*}
So, it is sufficient to show that 
\begin{align*}
\overline{\lim}^{T}_{I\ni h\rightarrow 0}\bS_{h}\subset \bS_{0}. 
\end{align*}

We take $\{h_{n}\}_{n=1}^{+\infty}\subset I$ such that $\displaystyle\lim_{n\rightarrow +\infty}h_{n}=0$
and choose $\{\bx_{n}\}^{+\infty}_{n=1}\subset \partial\Omega_{h_{n}}$ such that $\displaystyle\lim_{n\rightarrow +\infty}(\bx_{n},g_{h_{n}}(\bx_{n}))$ exists.
By (\ref{mesh_prop3}),we know that  $\displaystyle\lim_{n\rightarrow +\infty}\bx_{n}\in \overline{\Omega}\setminus \Omega$. Thus we obtain that
\begin{align*}
\lim_{n\rightarrow +\infty}(\bx_{n},g_{h_{n}}(\bx_{n})) = (\bx_{0}, z_{0}) \ \ \text{ where }\bx_{0}\in \partial\Omega\ \text{and}\ \ z_{0}\in\mathbb{R}. 
\end{align*}

In the following, we show that $z_{0} = g(\bx_{0})$. $\forall \ n\in \mathbb{N}$, 
there is a $(d-1)$-dimensional closed polyhedra $\boldsymbol{K}_{n}$ such that 
$\bx_{n}\in \boldsymbol{K}_{n}\subset \partial\Omega_{h_{n}}.$ We denote by $\{\bB_{i,n}\}_{i=1}^{l_{n}}$ all vertexes of $\boldsymbol{K}_{n}$ ($l_{n}$ may not  
have a uniform bound).  For  $\boldsymbol{K}_n$, since  Lemma~\ref{lemma_diam_vanish} holds and $\bx_{n}\rightarrow \bx_{0}$ as $n\rightarrow+\infty$, we have that 
\begin{align*}
\lim_{n\rightarrow+\infty}\sup_{\bx,\bx^{\prime}\in \boldsymbol{K}_{n}}\vert \bx - \bx^{\prime} \vert \ = \ \lim_{n\rightarrow+\infty}\text{dist}(\bx_{0}, \boldsymbol{K}_{n})\ =\ 0 
\end{align*}
which, together with ({\ref{H_B}}) and $g\in C(\partial \Omega)$,  implies that
\begin{align*}
\max_{1\leq i\leq l_{n}}\vert g_{h_{n}}(\bB_{i,n})-g(\bx_{0})) \vert\rightarrow 0\quad \text{ as } n\rightarrow +\infty.
\end{align*}
Since $\bx_{n}\in \boldsymbol{K}_{n}$, there are nonnegative numbers $\{ \lambda_{i,n},\}_{1\leq i\leq l_n}$ such that there hold:
\begin{align*}\left\{\begin{array}{lll}
 &\lambda_{1,n}+\cdots + \lambda_{l_{n},n} = 1,\\ \\
 & \lambda_{1,n}\bB_{1,n}+\cdots + \lambda_{l_{n}, n}\bB_{l_{n},n} = \bx_{n},\\ \\
 & \lambda_{1,n} g_{h_{n}}(\bB_{1,n})+\cdots + \lambda_{l_{n},n} g_{h_{n}}(\bB_{l_{n},n}) = g_{h_{n}}(\bx_{n}).
\end{array}
\right.
\end{align*}Here the third equality follows from the definitions of ${\boldsymbol K}_n$, ${\bS}_h$ and $g_h$.
Then,  it holds:
\begin{align*}
 \vert g_{h_{n}}(\bx_{n}) - g(\bx_{0})\vert 
&\leq  \sum_{i=1}^{l_n}\lambda_{i, n}\vert g_{h_{n}}(\bB_{i,n}) - g(\bx_{0}) \vert\\
&\leq  \max_{1\leq i\leq l_{n}}\vert g_{h_{n}}(\bB_{i,n})-g(\bx_{0})) \vert
\rightarrow 0 \quad \text{ as }n\rightarrow +\infty.
\end{align*}
Therefore, $z_{0}= g(\bx_{0})$. 
\end{proof}
\vspace{10pt}

\subsection{Convergence of finite element method }

\begin{lemma}
\label{lemma_fem_bound}
Assume $\Omega$ is strictly convex. If Assumption~\ref{assmp_source_integral} holds and  $I\subset (0,1)$ satisfies (\ref{mesh_prop1},\ref{mesh_prop2},\ref{mesh_prop3}),
then there exists a uniform constant $M>0$ such that for any $h\in I$,   the solution of the finite element method (\ref{fem_eqs}), denoted by $u_{h}$, satisfies :
\begin{align*}
\Vert u_{h}\Vert_{L^{\infty}(\Omega_{h})} \leq M, \quad \forall h\in I. 
\end{align*}
\end{lemma}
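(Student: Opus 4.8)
The plan is to obtain a uniform $L^\infty$ bound on the finite element solution $u_h$ by splitting into the upper and lower bounds separately, exploiting the convexity of $u_h$ together with the boundary data $g$. First I would deal with the upper bound. Since $u_h \in H_h$, it is convex on $\Omega_h$ and by Lemma~\ref{lemma_H_B} it agrees with $g$ at all boundary vertices $\{\bB_j\}_{j=1}^{m_h}$, hence (again by convexity and the structure of $H_h$) $u_h(\bx) = \inf_{(\bx,z)\in M_h} z$ for some $M_h \in K_h$ whose boundary-vertex heights are exactly $g(\bB_j)$. A convex function on a polytope attains its maximum at a vertex of the domain, so $\max_{\overline{\Omega_h}} u_h = \max_j g(\bB_j) \le \sup_{\partial\Omega}|g| =: M_g$, uniformly in $h$. (More carefully: any $M_h \in K_h$ whose interior heights $z_i$ are chosen arbitrarily still has $u_h \le $ the graph of the affine-by-parts function interpolating $g$ on $\partial\Omega_h$ if the $z_i$ are large; but for the \emph{solution} $u_h$ one must instead argue that $u_h$ cannot exceed $M_g$ anywhere, which follows from the fact that the graph of $u_h$ lies below every supporting hyperplane through boundary points and $u_h\le \max_j g(\bB_j)$ by convexity.) So $u_h \le M_g$ on $\overline{\Omega_h}$.

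For the lower bound I would argue by contradiction and use Assumption~\ref{assmp_source_integral} to control the total sub-differential volume. Suppose $\min_{\overline{\Omega_h}} u_h = u_h(\bx^\ast)$ becomes very negative. Since $u_h$ is convex, equals $g$ on $\partial\Omega_h$, and $\bx^\ast$ is an interior point (the minimum of a convex function with bounded-above boundary data sits in the interior when the interior value dips below the boundary values), the sub-differential $\partial u_h(\bx^\ast)$ — or $\partial u_h$ over a small ball around $\bx^\ast$ — is large: by a standard estimate the set $\partial u_h(\overline{\Omega_h})$ contains a ball of radius comparable to $(\,\max_j g(\bB_j) - \min u_h\,)/\mathrm{diam}(\Omega)$, since from any boundary point one can draw a supporting line of slope at least this magnitude down to $\bx^\ast$. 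Consequently
\begin{align*}
\int_{\partial u_h(\overline{\Omega_h})} R(\bp)\, d\bp
\ \ge\ \int_{B_{\varrho}(\boldsymbol 0)} R(\bp)\, d\bp,
\qquad \varrho \sim \frac{M_g - \min u_h}{\mathrm{diam}(\Omega)},
\end{align*}
which tends to $\int_{\mathbb{R}^d} R(\bp)\, d\bp$ as $\min u_h \to -\infty$.

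On the other hand, summing the finite element equations~(\ref{fem_eqs}) over $i$ and using the partition-of-unity property~(\ref{fem_node_func}) together with~(\ref{def_mesh4}), exactly as in~(\ref{discrete_source_integral}) of the proof of Theorem~\ref{thm_fem_unique}, gives
\begin{align*}
\int_{\partial u_h(\overline{\Omega_h})} R(\bp)\, d\bp
\ \le\ \sum_{i=1}^{k_h} \int_{\partial u_h(\bA_i)} R(\bp)\, d\bp
\ =\ \sum_{i=1}^{k_h}\int_{\Omega_h}\phi_{i,h}\, d\mu
\ \le\ \int_{\Omega} d\mu
\ <\ \int_{\mathbb{R}^d} R(\bp)\, d\bp,
\end{align*}
where the first inequality uses that the sub-differentials $\partial u_h(\bA_i)$ cover $\partial u_h(\overline{\Omega_h})$ up to a set of measure zero (corners of the piecewise-affine graph), since the only points at which $u_h$ fails to be differentiable are the faces through the vertices. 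Since $\int_\Omega d\mu$ is a fixed number strictly below $\int_{\mathbb{R}^d} R$, the displayed upper bound is a fixed gap away from $\int_{\mathbb{R}^d} R$, so it is incompatible with $\int_{B_\varrho} R \to \int_{\mathbb{R}^d} R$ once $\varrho$ (hence $|\min u_h|$) is large enough. This yields $\min u_h \ge -M$ for a uniform $M$ depending only on $\|g\|_\infty$, $\mathrm{diam}(\Omega)$, $R$, and $\int_\Omega d\mu$, and combined with the upper bound finishes the proof.

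The main obstacle I anticipate is making the geometric lower-bound estimate fully rigorous and uniform in $h$: one must verify that the minimizer genuinely lies in the interior (so that large slopes really are forced), that the ball of slopes in $\partial u_h(\overline{\Omega_h})$ has radius controlled \emph{below} by $|\min u_h|/\mathrm{diam}(\Omega)$ independently of the mesh, and that $\partial u_h(\overline{\Omega_h})$ is contained — modulo a null set — in the union $\bigcup_i \partial u_h(\bA_i)$ so that the finite element equations actually bound the left-hand integral. Each of these is standard for convex functions but needs care because $\Omega_h$ varies; the strict convexity of $\Omega$ and properties~(\ref{mesh_prop1})--(\ref{mesh_prop3}) of the mesh family are what keep the constants uniform.
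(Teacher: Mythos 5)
Your proof is correct and follows the same strategy as the paper's: bound $u_h$ from above by convexity together with $u_h(\bB_j)=g(\bB_j)$, and bound it from below by showing $\partial u_h$ contains a ball $B_{\rho_h}(\boldsymbol 0)$ with $\rho_h$ growing linearly in $\abs{\min u_h}$, then capping $\int_{B_{\rho_h}}R$ by $\int_\Omega d\mu < \int_{\mathbb{R}^d}R$ via the finite element equations and Assumption~\ref{assmp_source_integral}. One technical slip to fix: in the chain of inequalities you should use the open set $\Omega_h$, not $\overline{\Omega_h}$, when asserting that $\partial u_h(\cdot)$ is covered by $\bigcup_i \partial u_h(\bA_i)$ up to a null set --- the sub-differentials at boundary vertices $\bB_j$ are unbounded cones, so that covering fails for the closed domain, whereas for the open domain the covering holds and the ball inclusion $\overline{B_{\rho_h}(\boldsymbol 0)}\subset \partial u_h(\Omega_h)$ is still valid because the tilted supporting plane touches at the interior minimizer $\bx_h$.
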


\begin{proof}
By the construction of $\{u_{h}\}_{h\in I}$ in (\ref{fem_eqs}) and Lemma~\ref{lemma_H_B}, 
it holds for any $n\in \mathbb{N}$, 
\begin{align*}
\min_{\by\in\partial\Omega} g(\by) \ \leq \ u_{h}(\bx) \leq \max_{\by\in\partial\Omega} g(y),
\quad \forall \bx\in \partial\Omega_{h}.
\end{align*}
Since $u_{h}\in W^{+}(\Omega_{h})$, then we can derive that 
\begin{align*}
u_{h}(\bx) \leq \max_{\by\in\partial\Omega}g(\by),\quad \forall \bx\in \Omega_{h}.
\end{align*}
In the following, we will deduce some lower bound for $u_{h}$ in $\Omega_{h}$. 
Let $\bx_{h}\in \Omega_{h}$ such that 
\begin{align*}
u_{h}(\bx_{h}) = \min_{\bx\in \overline{\Omega_{h}}} u_{h}(\bx). 
\end{align*}
Without loss of generality, we assume 
$u_{h}(\bx_{h}) < \min_{\by\in \partial\Omega} g(\by)$.  We define 
\begin{align*}
\rho_{h} :=\dfrac{\min_{\by\in\partial\Omega}g(\by) - u_{h}(\bx_{h})}
{\sup_{\bx,\bx^{\prime}\in \Omega}\vert \bx - \bx^{\prime} \vert}.
\end{align*}
Then it is easy to see that 
$\overline{B_{\rho_{h}}(\boldsymbol{0})} \subset \partial u_{h}(\Omega_{h}) \subset \mathbb{R}^{d}, $
which implies that
\begin{align*}
\int_{B_{\rho_{h}}(\boldsymbol{0})} R(\bp) d\bp \leq \int_{\partial u_{h}(\Omega_{h}) } R(\bp) d\bp.
\end{align*}
By the construction of $u_{h}$ and Assumption~\ref{assmp_source_integral}, we know that 
\begin{align*}
\int_{\partial u_{h}(\Omega_{h}) } R(\bp) d\bp = \int_{\Omega_{h}} \big( \sum_{i=1}^{k_{h}} \phi_{i,h} \big) d\mu
\leq \int_{\Omega_{h}} d\mu \leq  \int_{\Omega} d\mu < \int_{\mathbb{R}^{d}} R(\bp) d\bp.
\end{align*}
Then  by combining the latest two estimates above, it holds:
\begin{align*}
\int_{B_{\rho_{h}}(\boldsymbol{0})} R(\bp) d\bp \leq \int_{\Omega} d\mu < \int_{\mathbb{R}^{d}} R(\bp) d\bp.
\end{align*}
We set
$
\displaystyle g_{R}(\rho) := \int_{B_{\rho}(\boldsymbol{0})} R(\bp)  d\bp \quad \text{for all } \rho>0 \ \text{and}  \ \omega_{0} : = \int_{\Omega} d\mu.
$
Obviously, $g_{R}: [0,+\infty) \rightarrow [0,+\infty)$ is strictly increasing and 
$g_{R}^{-1}$ exists (it is also strictly increasing and continuous). Then we infer that
\begin{align*}
g_{R}(\rho_{h}) \leq \omega_{0} < \int_{\mathbb{R}^{d}} R(\bp) d\bp, 
\end{align*}
which implies that $0 < \rho_{h} \leq g_{R}^{-1}(\omega_{0}) < +\infty. $ Hence by the definition of $g_h$ , we get
\begin{align*}
u_{h}(\bx_{h}) \geq \min_{\by\in \partial \Omega}g(\by) 
- \big( \sup_{\bx,\bx^{\prime}\in \Omega}\vert \bx - \bx^{\prime} \vert\big)\cdot g_{R}^{-1}(\omega_{0}). 
\end{align*}
Therefore, for any $h\in I$, it holds:
\begin{align*}
\min_{\by\in \partial \Omega}g(\by) 
- \big( \sup_{\bx,\bx^{\prime}\in \Omega}\vert \bx - \bx^{\prime} \vert\big)\cdot g_{R}^{-1}(\omega_{0})
\leq u_{h}(\bx) \leq \max_{\by\in\partial\Omega} g(\by), \quad \forall \bx \in \Omega_{h}. 
\end{align*}
\end{proof}

\begin{assumption}
\label{assmp_source_decay}
For any $\bx_{0}\in \partial\Omega$, there exists an open $d$-ball $U_{\rho}(\bx_{0})$ such that 
\begin{align*}
\int_{e\cap \Omega} d\mu \leq C_{1}^{\prime}\big( \sup_{\bx \in e} 
 \text{dist}(\bx, \partial\Omega)\big)^{\lambda}\vert e\vert,
\quad \forall \text{ Borel set } e\subset U_{\rho}(\bx_{0})\cap \Omega.
\end{align*}
Here, $C_{1}^{\prime}>0$ and $\lambda \geq 0$ are constants 
independent of the choice of $\bx_{0}\in\partial\Omega$. 
\end{assumption}

\begin{lemma}
\label{lemma_border_valid}
Let Assumptions~\ref{assmp_boundary_para},\ref{assmp_R},\ref{assmp_source_integral},\ref{assmp_source_decay}  
hold  and  the numbers $k,\lambda$ and $\tau$ satisfy 
\begin{align*}\left\{\begin{array}{lll}
& k\leq K \text{  if  }0\leq k<1 \text{ or } k\geq {d}/{2},\\ \\
& k<K \text{  if  } 1\leq k < {d}/{2},
\end{array}
\right.
\end{align*}
where $K= \dfrac{d+\tau+1}{\tau+2}+\dfrac{\lambda}{2}$. 
If $I\subset (0,1)$ satisfies (\ref{mesh_prop1},\ref{mesh_prop2},\ref{mesh_prop3}) and  $u_{h}$ is the solution of the finite element method (\ref{fem_eqs}) for any $h\in I$, 
then there exist  a sequence $\{h_{n}\}_{n=1}^{+\infty}\subset I$ with $\displaystyle\lim_{n\rightarrow +\infty}h_{n} = 0$, 
and $u_{0}\in W^{+}(\Omega)\cap C(\overline{\Omega})$ such that  $u_0$ solves (1.1) and 
\begin{align*}
& \lim_{n\rightarrow + \infty}\Vert u_{h_{n}} - u_{0}\Vert_{L^{\infty}(\overline{\Omega_{\delta}})}  = 0,
\quad \forall \delta > 0,
\end{align*}where $\Omega_{\delta}= \{\bx\in\Omega: \text{dist}(\bx,\partial\Omega)> \delta\}$.
\end{lemma}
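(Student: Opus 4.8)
The plan is to use the uniform bound of Lemma~\ref{lemma_fem_bound} together with the compactness result of Theorem~\ref{thm_convex_sequence_converge_domain} to extract a locally uniformly convergent subsequence of finite element solutions, and then to check that the limit $u_0$ satisfies both conditions of (\ref{ma_measure_eqs}): the interior equation (\ref{ma_measure_eq1}) from the weak convergence of the discrete Monge--Amp\`ere measures, and the boundary condition (\ref{ma_measure_eq2}) from Theorem~\ref{thm_border_converge} and Lemma~\ref{lemma_continuous_border}. First note that Assumption~\ref{assmp_boundary_para} forces $\Omega$ to be strictly convex (see the Remark following that assumption), so Lemmas~\ref{lemma_diam_vanish}, \ref{lemma_border_converge} and \ref{lemma_fem_bound} are available. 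Fix any sequence $\{h_n\}\subset I$ with $h_n\to 0$ and set $\Omega_n:=\Omega_{h_n}$, $v_n:=u_{h_n}$; by (\ref{mesh_prop3}) the family $\{\Omega_n\}$ satisfies (\ref{assmp_subdomains}), it satisfies (\ref{assmp_function_sequence}) since $H_{h_n}\subset W^{+}(\Omega_{h_n})$, and Lemma~\ref{lemma_fem_bound} gives $\|v_n\|_{L^{\infty}(\Omega_n)}\le M$, i.e.\ (\ref{assmp_convex_functions}). Then Theorem~\ref{thm_convex_sequence_converge_domain} yields a subsequence (still denoted $\{h_n\}$), a function $u_0\in W^{+}(\Omega)$ with $\|u_{h_n}-u_0\|_{L^{\infty}(\overline{\Omega_\delta})}\to 0$ for every $\delta>0$ (hence $u_0(\bx)=\lim_n u_{h_n}(\bx)$ for all $\bx\in\Omega$ and $\|u_0\|_{L^{\infty}(\Omega)}\le M$), and the weak convergence $\nu_{h_n}\rightharpoonup\nu_0$ of the set functions $\nu_{h_n}(e):=\int_{\partial u_{h_n}(e)}R(\bp)\,d\bp$ and $\nu_0(e):=\int_{\partial u_0(e)}R(\bp)\,d\bp$.

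Next I would prove that $\nu_0=\mu$ in $\Omega$, i.e.\ (\ref{ma_measure_eq1}) holds for $u_0$. The key structural fact is that, since $u_{h_n}$ is piecewise linear on $\mathcal T_{h_n}$, the subdifferential $\partial u_{h_n}(\bx)$ is the same set for all $\bx$ in the relative interior of a given mesh face, and on a face of positive dimension this set lies in a proper affine subspace of $\mathbb R^{d}$ and hence carries no $R\,d\bp$-mass; therefore $\nu_{h_n}$, regarded as a measure on $\Omega_{h_n}$, equals the atomic measure $\sum_{i=1}^{k_{h_n}}\big(\int_{\partial u_{h_n}(\bA_i)}R(\bp)\,d\bp\big)\delta_{\bA_i}$ carried by the interior vertices. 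Hence, for $f\in C_c(\Omega)$ with $f\ge 0$, the finite element equations (\ref{fem_eqs}) give
\begin{align*}
\int_{\Omega_{h_n}}f\,d\nu_{h_n}
&=\sum_{i=1}^{k_{h_n}}f(\bA_i)\int_{\partial u_{h_n}(\bA_i)}R(\bp)\,d\bp\\
&=\int_{\Omega}\Big(\sum_{i=1}^{k_{h_n}}f(\bA_i)\,\phi_{i,h_n}\Big)\,d\mu .
\end{align*}
For $h_n$ small, $\sum_i f(\bA_i)\,\phi_{i,h_n}$ coincides near $\mathrm{supp}(f)$ with the $\mathbb{P}_1$ interpolant of $f$ on $\mathcal T_{h_n}$ and vanishes elsewhere, so it converges pointwise in $\Omega$ to $f$, is bounded by $\|f\|_{\infty}$, and differs from $f$ only on a fixed compact subset of $\Omega$ of finite $\mu$-measure; by dominated convergence $\int_{\Omega}\big(\sum_i f(\bA_i)\,\phi_{i,h_n}\big)\,d\mu\to\int_{\Omega}f\,d\mu$. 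Together with $\int_{\Omega_{h_n}}f\,d\nu_{h_n}\to\int_{\Omega}f\,d\nu_0$ this gives $\int_{\Omega}f\,d\nu_0=\int_{\Omega}f\,d\mu$, first for $f\ge 0$ and then for every $f\in C_c(\Omega)$, whence $\nu_0=\mu$.

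To obtain the boundary condition I would apply Theorem~\ref{thm_border_converge} to $\{\Omega_{h_n}\}$, $R$ and $\{u_{h_n}\}$ with $\tilde b:=g$. Its hypotheses are checked as follows: Assumptions~\ref{assmp_boundary_para} and \ref{assmp_R} are given; part (a) of Assumption~\ref{assmp_function_decay} is the pointwise convergence $u_{h_n}\to u_0$ just obtained; Assumption~\ref{assmp_border}(a) holds because $u_{h_n}\in H_{h_n}\subset C^{0}(\overline{\Omega_{h_n}})$, so the border of $v_n$ is $b_{h_n}=u_{h_n}|_{\partial\Omega_{h_n}}\in C^{0}(\partial\Omega_{h_n})$; and Assumption~\ref{assmp_border}(b) holds with $\tilde{\bS}$ the graph of $g$ by Lemma~\ref{lemma_border_converge}, using that the topological limit along the subsequence $\{h_n\}$ coincides with the topological limit of the full family.

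The remaining hypothesis, part (b) of Assumption~\ref{assmp_function_decay}, is the step I expect to be the main obstacle; I would derive it from Assumption~\ref{assmp_source_decay}, once more using the atomic structure of $\nu_{h_n}$ together with the partition-of-unity bound $\sum_i\phi_{i,h_n}\le 1$, which avoids any quasi-uniformity requirement on the unstructured meshes $\mathcal T_{h_n}$: for a Borel set $e\subset U_\rho(\bx_0)\cap\Omega$,
\begin{align*}
\int_{\partial u_{h_n}(e\cap\Omega_{h_n})}R(\bp)\,d\bp
&\le\sum_{\bA_i\in e}\int_{\Omega_{h_n}}\phi_{i,h_n}\,d\mu
=\int_{\Omega_{h_n}}\Big(\sum_{\bA_i\in e}\phi_{i,h_n}\Big)\,d\mu\\
&\le\mu\big(\{\bx\in\Omega:\,\mathrm{dist}(\bx,e)\le h_n\}\big),
\end{align*}
and, choosing $\rho$ slightly smaller than the radius in Assumption~\ref{assmp_source_decay} so that the enlarged set stays inside that ball, Assumption~\ref{assmp_source_decay} bounds the right-hand side by $C_1^{\prime}\big(\sup_{\bx\in e}\mathrm{dist}(\bx,\partial\Omega)+h_n\big)^{\lambda}\,\big|\{\bx:\mathrm{dist}(\bx,e)\le h_n\}\big|$; letting $n\to\infty$ yields part (b) with $C_1=C_1^{\prime}$, the delicate point being that $\{\mathrm{dist}(\cdot,e)\le h_n\}$ decreases only to $\overline e$, so the estimate must be used for the (open, regular) sets $e$ with $|\overline e|=|e|$ that actually occur in the proof of Theorem~\ref{thm_border_converge}. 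Granting this, Theorem~\ref{thm_border_converge} gives $g=b_0$ on $\partial\Omega$ with $b_0$ the border of $u_0$; since $b_0=g$ is continuous on $\partial\Omega$, Lemma~\ref{lemma_continuous_border} shows that $u_0$ extends continuously to $\overline{\Omega}$ with $u_0|_{\partial\Omega}=g$, so $u_0\in W^{+}(\Omega)\cap C(\overline{\Omega})$ and (\ref{ma_measure_eq2}) holds. Combined with $\nu_0=\mu$, this proves that $u_0$ solves (\ref{ma_measure_eqs}), and the required convergence $\|u_{h_n}-u_0\|_{L^{\infty}(\overline{\Omega_\delta})}\to 0$ is precisely what Theorem~\ref{thm_convex_sequence_converge_domain} provided.
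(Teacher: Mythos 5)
Your proposal follows the paper's route essentially step for step: use Lemma~\ref{lemma_fem_bound} together with Theorem~\ref{thm_convex_sequence_converge_domain} to extract a subsequence and the limit $u_0\in W^{+}(\Omega)$; exploit the atomic structure of the discrete Monge--Amp\`ere measures to show $\nu_0=\mu$ via dominated convergence; then verify the hypotheses of Theorem~\ref{thm_border_converge} (via Lemma~\ref{lemma_border_converge} for Assumption~\ref{assmp_border} and via Assumption~\ref{assmp_source_decay} for Assumption~\ref{assmp_function_decay}(b)) and finish with Lemma~\ref{lemma_continuous_border}.

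The one place you genuinely diverge is the verification of Assumption~\ref{assmp_function_decay}(b), and there your version is the more careful one. The paper asserts
\begin{align*}
\int_{\partial u_{h_{n}}(e\cap \Omega_{h_{n}})}R(\bp)\,d\bp \;=\; \int_{e\cap \Omega_{h_{n}}}\Big( \sum_{i}\phi_{i,h_{n}} \Big) d\mu \;\leq\; \mu(e),
\end{align*}
but the claimed identity fails: since $\nu_{h_n}$ is atomic at the interior vertices, the left-hand side equals $\sum_{\bA_i\in e}\int_{\Omega_{h_n}}\phi_{i,h_n}\,d\mu$, and the support of each $\phi_{i,h_n}$ extends a distance of order $h_n$ outside $e$, so neither the equality nor the bound by $\mu(e)$ holds (take $\mu$ concentrated just outside a tiny $e$ containing a single vertex). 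Your estimate via the enlarged set $\{\bx:\mathrm{dist}(\bx,e)\le h_n\}$ and the partition-of-unity bound $\sum_i\phi_{i,h_n}\le 1$ is the correct fix. The residual delicacy you flag --- that $\big|\{\mathrm{dist}(\cdot,e)\le h_n\}\big|\to|\overline e|$ rather than $|e|$ --- is real, but it is harmless here: Theorem~\ref{thm_border_converge} invokes the decay estimate only for the set $H(\bK)$, which is closed, so $|\overline{H(\bK)}|=|H(\bK)|$ and your inequality delivers exactly what that proof needs. In short, your proof is correct and in fact repairs an inaccuracy in the published argument; nothing essential is missing.
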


\begin{proof}
According to (\ref{mesh_prop3}) and Lemma~\ref{lemma_fem_bound}, we can apply 
Theorem~\ref{thm_convex_sequence_converge_domain} to $\{u_{h}\}_{h\in I}$. 
Therefore, by Theorem~\ref{thm_convex_sequence_converge_domain},  
there exist a sequence $\{h_{n}\}_{n=1}^{+\infty}\subset I$ with 
$\displaystyle\lim_{n\rightarrow +\infty}h_{n} = 0$, and a function $u_{0}\in W^{+}(\Omega)$ such that
\begin{align*}\left\{\begin{array}{lll}
& \displaystyle\lim_{n\rightarrow + \infty}\Vert u_{h_{n}} - u_{0}\Vert_{L^{\infty}(\overline{\Omega_{\delta}})}  = 0,
\quad \forall \delta > 0,\\ \\
& \displaystyle\lim_{n \rightarrow +\infty}\int_{\Omega_{h_{n}}}f d\mu_{n} = 
\int_{\Omega}f d\mu_{0},\quad \forall f\in C_{c}^{0}(\Omega),
\end{array}
\right.
\end{align*}
where $\mu_{n},\mu_{0}$ are measures in $\Omega_{h_{n}}, \Omega$ defined as
\begin{align*}\left\{\begin{array}{lll}
\mu_{n}(e) = & \int_{\partial u_{h_{n}}(e)} R(\bp) d\bp,\quad \forall \text{ Borel set } e\subset \Omega_{h_{n}},\\ \\
\mu_{0}(e) = & \int_{\partial u_{0}(e)} R(\bp) d\bp,\quad \forall \text{ Borel set } e\subset \Omega.
\end{array}
\right.
\end{align*}

From the construction of $u_{h}$ in (\ref{fem_eqs}), 
we know that for any $f\in C_{c}(\Omega)$,  
\begin{align*}
\int_{\Omega_{h_{n}}} f d\mu_{n} = \sum_{i=1}^{k_{h_{n}}}f(\bA_{i})\int_{\Omega_{h_{n}}}\phi_{i,h_{n}}d \mu
=\int_{\Omega_{h_{n}}} \sum_{i=1}^{k_{h_{n}}}\big(f(\bA_{i})\phi_{i,h_{n}}\big) d \mu, 
\end{align*}
where $\{A_{i}\}_{i=1}^{k_{h_{n}}}$ are the vertexes of $\mathcal{T}_{h_{n}}$ contained in the interior of $\Omega_{h_{n}}$. 
By (\ref{mesh_prop3}) and the construction of $\phi_{i,h}$ in (\ref{fem_node_func}), it is easy to see that 
\begin{align*}
\displaystyle\lim_{n\rightarrow +\infty}\sum_{i=1}^{k_{h_{n}}}\big(f(\bA_{i})\phi_{i,h_{n}}\big)(\bx)
=f(\bx)\  \forall \bx\in \Omega,\ \text{and} \ \displaystyle\sup_{\bx\in \Omega_{h_{n}}}\vert \sum_{i=1}^{k_{h_{n}}}\big(f(\bA_{i})\phi_{i,h_{n}}\big)(\bx)\vert 
\leq \sup_{x\in \Omega} \vert f(\bx) \vert. 
\end{align*}
Then by dominated convergence Theorem, we know that 
\begin{align*}
\int_{\Omega_{h_{n}}}f d\mu_{h} \rightarrow \int_{\Omega} f d\mu, \text{ as } n\rightarrow +\infty.
\end{align*}
This implies that 
\begin{align*}
\int_{\Omega} f d\mu_{0} = \int_{\Omega} f d\mu,\quad \forall f\in C_{c}^{0}(\Omega). 
\end{align*}
Thus, we have that 
\begin{align*}
\int_{\partial u_{0}(e)}R(\bp)d\bp = \mu_{0}(e) = \mu (e)\quad \forall \text{  Borel set  } e\subset\Omega. 
\end{align*}

We denote by $b_{0}$ the border of $u_{0}$, which is given by
\begin{align*}
b_{0}(\bx) = \liminf_{\Omega\ni\bx^{\prime} \rightarrow \bx} u_{0}(\bx^{\prime}),
\quad \forall \bx \in \partial\Omega. 
\end{align*}
By Lemma~\ref{lemma_continuous_border} and the fact that $g\in C(\partial\Omega)$, it is sufficient to show
\begin{align}
\label{border_eq}
b_{0} = g \text{  on  } \partial\Omega.
\end{align}
In fact, (\ref{border_eq}) would be an immediate consequence if we apply Theorem~\ref{thm_border_converge} 
to $\{u_{h_{n}}\}_{n=1}^{+\infty}$ and $u_{0}$. In the following, we only need to verify all assumptions 
of Theorem~\ref{thm_border_converge} hold. Obviously, from our assumptions, Assumptions~\ref{assmp_boundary_para} and \ref{assmp_R} hold. By (\ref{fem_node_func}) and the construction of $u_{h}$ in (\ref{fem_eqs}), 
we know that for any $n\in \mathbb{N}$, 
\begin{align*}
\int_{\partial u_{h_{n}}(e\cap \Omega_{h_{n}})}R(\bp) d\bp 
= \int_{e\cap \Omega_{h_{n}}}\big( \sum_{i=1}^{k_{h_{n}}}\phi_{i,h_{n}} \big) d\mu \leq\mu (e),
\quad \forall \text{ Borel set } e \subset \Omega.
\end{align*}
By Assumption~\ref{assmp_source_decay}, it is easy to check that Assumption~\ref{assmp_function_decay} 
holds for $\{u_{h_{n}}\}_{n=1}^{+\infty}$ and $u_{0}$. Moreover, by Lemma~\ref{lemma_border_converge} 
and the construction of $u_{h}$ in (\ref{fem_eqs}), Assumption~\ref{assmp_border} is also valid 
for $\{u_{h_{n}}\}_{n=1}^{+\infty}$ and $u_{0}$. Thus all the assumptions of Theorem~\ref{thm_border_converge} 
hold. Then we have (\ref{border_eq}).
\end{proof}

\begin{theorem}
\label{thm_converge1}
Let all the assumptions of Lemma~{\ref{lemma_border_valid}} hold. Then, (1.1) admits a unique function $u\in W^{+}(\Omega)\cap C(\overline{\Omega})$. In addition, for any $\delta>0$, there holds:
\begin{align}
\label{compact_uniform_conv}
\lim_{I\ni h\rightarrow 0}\Vert u_{h} - u\Vert_{L^{\infty}(\overline{\Omega_{\delta}})}=0
\end{align}
where  $\Omega_{\delta}= \{\bx\in\Omega: \text{dist}(\bx,\partial\Omega)> \delta\}$ and for any $h\in I$, $u_h$ is the solution of the finite element method (5.4).
\end{theorem}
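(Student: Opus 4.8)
The plan is to combine the subsequential convergence already established in Lemma~\ref{lemma_border_valid} with the comparison principle of Theorem~\ref{thm_comparison}: the latter pins down the limit uniquely, which lets me upgrade convergence along a subsequence to convergence of the whole family $\{u_h\}_{h\in I}$.

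First I would settle existence and uniqueness of the exact solution. Existence is immediate from Lemma~\ref{lemma_border_valid}: it furnishes a sequence $\{h_n\}\subset I$ with $h_n\to 0$ and a function $u\in W^{+}(\Omega)\cap C(\overline{\Omega})$ solving (\ref{ma_measure_eqs}). For uniqueness, suppose $u_1,u_2\in W^{+}(\Omega)\cap C(\overline{\Omega})$ both solve (\ref{ma_measure_eqs}). Then $u_1=u_2=g$ on $\partial\Omega$, so in particular $u_1(\bx)\ge\limsup_{\Omega\ni\bx'\to\bx}u_2(\bx')$ and $u_2(\bx)\ge\limsup_{\Omega\ni\bx'\to\bx}u_1(\bx')$ for every $\bx\in\partial\Omega$, and $\int_{\partial u_1(e)}R(\bp)\,d\bp=\mu(e)=\int_{\partial u_2(e)}R(\bp)\,d\bp$ for every Borel set $e\subset\Omega$. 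Applying Theorem~\ref{thm_comparison} once with $(z_1,z_2)=(u_1,u_2)$ and once with $(z_1,z_2)=(u_2,u_1)$ gives $u_1\ge u_2$ and $u_2\ge u_1$ on $\Omega$, hence $u_1=u_2$. Thus (\ref{ma_measure_eqs}) has exactly one solution, which we call $u$.

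Next I would prove (\ref{compact_uniform_conv}) by contradiction. If it fails, there exist $\delta_0>0$, $\epsilon_0>0$ and a sequence $\{\tilde h_m\}_{m=1}^{+\infty}\subset I$ with $\tilde h_m\to 0$ and $\Vert u_{\tilde h_m}-u\Vert_{L^{\infty}(\overline{\Omega_{\delta_0}})}\ge\epsilon_0$ for all $m$. After discarding repetitions, the set $\tilde I:=\{\tilde h_m:m\in\mathbb{N}\}$ still satisfies (\ref{mesh_prop1}), (\ref{mesh_prop2}) and (\ref{mesh_prop3}), since all three are inherited from $I$; hence Lemma~\ref{lemma_border_valid} applies to $\tilde I$ and yields a further subsequence $\{\tilde h_{m_k}\}\subset\tilde I$ and a function $\tilde u\in W^{+}(\Omega)\cap C(\overline{\Omega})$ solving (\ref{ma_measure_eqs}) with $\Vert u_{\tilde h_{m_k}}-\tilde u\Vert_{L^{\infty}(\overline{\Omega_\delta})}\to 0$ for every $\delta>0$. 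By the uniqueness just proved, $\tilde u=u$, and taking $\delta=\delta_0$ contradicts $\Vert u_{\tilde h_{m_k}}-u\Vert_{L^{\infty}(\overline{\Omega_{\delta_0}})}\ge\epsilon_0$. This establishes (\ref{compact_uniform_conv}); since $u_h$ is uniquely determined for each $h\in I$ by Theorem~\ref{thm_fem_unique}, the theorem follows.

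I expect no serious obstacle in this argument: the substantive analytic content — compactness of the family, passage to the limit in the associated Monge--Amp\`ere measures, and the boundary identification $b_0=g$ — is all packaged inside Lemma~\ref{lemma_border_valid}, while uniqueness is delivered by Theorem~\ref{thm_comparison}. The only point needing a little care is the ``subsequence of a subsequence'' step in the last paragraph, together with the observation that any infinite subset of $I$ accumulating only at $0$ again satisfies the mesh hypotheses (\ref{mesh_prop1})--(\ref{mesh_prop3}), so that Lemma~\ref{lemma_border_valid} may legitimately be reapplied to it.
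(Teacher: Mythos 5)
Your proposal is correct and follows essentially the same route as the paper's own proof: existence and compactness come from Lemma~\ref{lemma_border_valid}, uniqueness from the comparison principle of Theorem~\ref{thm_comparison} applied in both directions, and full-family convergence is upgraded from subsequential convergence by the standard ``every subsequence has a further subsequence converging to the unique limit'' argument. Your version is in fact slightly more careful than the paper's, since you explicitly extract the $\epsilon_0$-bounded subsequence and note that any infinite subset of $I$ accumulating only at $0$ again satisfies (\ref{mesh_prop1})--(\ref{mesh_prop3}) so that Lemma~\ref{lemma_border_valid} may be reapplied, two small points the paper leaves implicit.
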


\begin{proof}
By Lemma~\ref{lemma_border_valid}, we know that (1.1) admits a solution
$u_{0}\in W^{+}(\Omega)\cap C(\overline{\Omega})$. By Theorem~\ref{thm_comparison}, we know that $u_{0}$ is the unique solution  to (1.1).  

We shall prove  (\ref{compact_uniform_conv}) by contradiction. If (\ref{compact_uniform_conv}) is not true, then there is $\delta_{0}>0$ and 
$\{h_{n}^{\prime}\}_{n=1}^{+\infty}\subset I$ with $\displaystyle\lim_{n\rightarrow +\infty}h_{n}^{\prime}=0$, such that 
\begin{align*}
\lim_{n\rightarrow +\infty}\Vert u_{h_{n}^{\prime}} - u_{0}\Vert_{L^{\infty}(\overline{\Omega_{\delta_{0}}})}\neq 0.
\end{align*}
By applying Lemma~\ref{lemma_border_valid} to $\{u_{h_{n}^{\prime}}\}_{n=1}^{+\infty}$,  we know that
there exist a function $u_{0}^{\prime}\in W^{+}(\Omega)\cap C(\overline{\Omega})$ satisfying (1.1), 
and  a subsequence of $\{u_{h_{n}^{\prime}}\}_{n=1}^{+\infty}$, still denoted by  $\{u_{h_{n}^{\prime}}\}_{n=1}^{+\infty}$, such that $u_{h_{n}^{\prime}}$ converges to $u_{0}^{\prime}$ uniformly on 
$\overline{\Omega_{\delta_{0}}}$ as $n\rightarrow +\infty$.  Since (1.1) admits a unique solution, then $u_0=u_0^{\prime}$. This is a contradiction. Therefore, (\ref{compact_uniform_conv}) is true.
\end{proof}

\vspace{20pt}

\section{Generalized solution with Dirichlet data imposed weakly}

In this section, we firstly  introduce the finite element method (\ref{fem_weak_eqs}) for solving (\ref{weak_eqs}), 
which is based on the finite element method (\ref{fem_eqs}). Then we show that (\ref{weak_eqs}) is well-posed 
and the solutions of (\ref{fem_weak_eqs}) converge to the exact solution. The main result in this section is Theorem~\ref{thm_converge2}. 

The finite element method for (\ref{weak_eqs}) is to find $u_{h}^{\delta}\in H_{h}$ such that 
\begin{align}
\label{fem_weak_eqs}
\int_{\partial u_{h}^{\delta}(\bA_{i})}R(\bp) d\bp = \int_{\Omega_{h}}\phi_{i,h}d\mu^{\delta},
\quad \forall 1\leq i\leq k_{h}.
\end{align} 
Here,  for any $\delta>0$, $\mu^{\delta}$ is a measure defined by $
\mu^{\delta}(e) := \mu(e\cap \Omega_{\delta})$ for any Borel set $ e\subset \Omega$, 
$H_{h}$ is defined in (\ref{def_H}) and $\phi_{i,h}$ is introduced in (\ref{fem_node_func}).

\begin{theorem}
\label{thm_converge2}
Let Assumptions~\ref{assmp_boundary_para}, \ref{assmp_R}, \ref{assmp_source_integral} hold for the domain 
$\Omega$ and the function $R$. Then there is a unique function $u\in W^{+}(\Omega)$ satisfying (\ref{weak_eqs}) 
and (\ref{weak_eq3}). In addition, for any $\sigma > 0$, 
\begin{align}
\label{convergence_weak_solution}
\lim_{\delta\rightarrow 0+} \big(
\lim_{h\rightarrow 0, h\in I} \Vert u_{h}^{\delta} - u \Vert_{L^{\infty}(\overline{\Omega_{\sigma}})}\big) =0,
\end{align}where  $\Omega_{\sigma}= \{\bx\in\Omega: \text{dist}(\bx,\partial\Omega)> \sigma\}$.
\end{theorem}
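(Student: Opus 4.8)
The plan is to reduce the weak problem to the classical Dirichlet problem (1.1) with the truncated source $\mu^\delta$, solve the latter by Theorem~\ref{thm_converge1}, and then let $\delta\to 0+$, using the comparison principle Theorem~\ref{thm_comparison} for monotonicity and Lemma~\ref{lemma_measure_weak_converge} to pass to the limit in the Monge--Amp\`ere measure. Fix $\delta>0$. First I would check that Theorem~\ref{thm_converge1} applies with $\mu$ replaced by $\mu^\delta$: Assumptions~\ref{assmp_boundary_para} and \ref{assmp_R} are hypotheses of Theorem~\ref{thm_converge2} (and $I$ is the mesh family satisfying (\ref{mesh_prop1},\ref{mesh_prop2},\ref{mesh_prop3})); Assumption~\ref{assmp_source_integral} holds for $\mu^\delta$ since $\int_\Omega d\mu^\delta\le\int_\Omega d\mu<\int_{\RRR^d}R\,d\bp$; and Assumption~\ref{assmp_source_decay} holds trivially for $\mu^\delta$, because $\mu^\delta$ vanishes on $\{\mathrm{dist}(\cdot,\partial\Omega)<\delta\}$, so for every $\bx_0\in\partial\Omega$ one may take $\rho=\delta/2$, $C_1'=1$ and the exponent $\lambda=2k$, and with this choice $K=\tfrac{d+\tau+1}{\tau+2}+\tfrac{\lambda}{2}>k$, which makes the compatibility condition on $k,\lambda,\tau$ automatic. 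Hence Theorem~\ref{thm_converge1} yields a unique $u^\delta\in W^+(\Omega)\cap C(\overline\Omega)$ with $\int_{\partial u^\delta(e)}R\,d\bp=\mu^\delta(e)$ for every Borel $e\subset\Omega$ and $u^\delta|_{\partial\Omega}=g$, together with $\lim_{I\ni h\to 0}\|u_h^\delta-u^\delta\|_{L^\infty(\overline{\Omega_\sigma})}=0$ for every $\sigma>0$. Moreover Lemma~\ref{lemma_fem_bound}, applied with $\mu^\delta$ (and using $\int_\Omega d\mu^\delta\le\int_\Omega d\mu$), bounds $\|u_h^\delta\|_{L^\infty(\Omega_h)}$ by a constant $M$ independent of $h$ and $\delta$; letting $h\to 0$ gives $\|u^\delta\|_{L^\infty(\Omega)}\le M$ for all $\delta>0$.

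\textbf{Step 2 (the limit as $\delta\to 0+$).} For $0<\delta_1<\delta_2$ one has $\Omega_{\delta_2}\subset\Omega_{\delta_1}$, hence $\mu^{\delta_2}(e)\le\mu^{\delta_1}(e)$ for every Borel $e$; applying Theorem~\ref{thm_comparison} with $z_1=u^{\delta_2}$ and $z_2=u^{\delta_1}$ (both in $C^0(\overline\Omega)$ and equal to $g$ on $\partial\Omega$, so the boundary hypothesis holds with equality) gives $u^{\delta_2}\ge u^{\delta_1}$ on $\Omega$. Thus $u^\delta$ is monotone decreasing as $\delta\to 0+$ and, being bounded below by $-M$, converges pointwise on $\Omega$ to a finite convex function $u:=\inf_{\delta>0}u^\delta\in W^+(\Omega)$; by monotonicity and Dini's theorem the convergence is uniform on every compact subset of $\Omega$, in particular on each $\overline{\Omega_\sigma}$. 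To verify (\ref{weak_eq1}) for $u$, I would pick $\delta_n\downarrow 0$ and apply Lemma~\ref{lemma_measure_weak_converge} with $\Omega_n\equiv\Omega$, $v_n=u^{\delta_n}$, $v_0=u$: this yields $\nu_n\rightharpoonup\nu_0$ weakly, where $\nu_n(e)=\int_{\partial u^{\delta_n}(e)}R\,d\bp=\mu^{\delta_n}(e)$ and $\nu_0(e)=\int_{\partial u(e)}R\,d\bp$; on the other hand $\mu^{\delta_n}\rightharpoonup\mu$, since for $f\in C_c(\Omega)$ one has $\int f\,d\mu^{\delta_n}=\int f\,d\mu$ as soon as $\mathrm{supp}\,f\subset\Omega_{\delta_n}$. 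By uniqueness of the weak limit $\nu_0=\mu$, i.e.\ $\int_{\partial u(e)}R\,d\bp=\mu(e)$ for all Borel $e\subset\Omega$. For (\ref{weak_eq2}), since $u\le u^\delta$ on $\Omega$ and $u^\delta\in C(\overline\Omega)$ with $u^\delta|_{\partial\Omega}=g$, we get $\limsup_{\Omega\ni\bx'\to\bx}u(\bx')\le g(\bx)$ for every $\bx\in\partial\Omega$.

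\textbf{Step 3 (maximality, uniqueness, convergence).} If $v\in W^+(\Omega)$ satisfies (\ref{weak_eqs}), then for each $\delta$ I would apply Theorem~\ref{thm_comparison} with $z_1=u^\delta$, $z_2=v$: the boundary hypothesis holds because $u^\delta(\bx)=g(\bx)\ge\limsup_{\bx'\to\bx}v(\bx')$, and $\int_{\partial u^\delta(e)}R\,d\bp=\mu^\delta(e)\le\mu(e)=\int_{\partial v(e)}R\,d\bp$, so $u^\delta\ge v$ on $\Omega$; letting $\delta\to 0+$ gives $u\ge v$, which is (\ref{weak_eq3}). Uniqueness then follows at once: if $u'$ also satisfies (\ref{weak_eqs})--(\ref{weak_eq3}) then $u'\ge u$ and $u\ge u'$. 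Finally, for (\ref{convergence_weak_solution}), fix $\sigma>0$; for each fixed $\delta$, Step~1 gives $\lim_{I\ni h\to 0}\|u_h^\delta-u\|_{L^\infty(\overline{\Omega_\sigma})}=\|u^\delta-u\|_{L^\infty(\overline{\Omega_\sigma})}$, and this tends to $0$ as $\delta\to 0+$ by Step~2, since $\overline{\Omega_\sigma}$ is a compact subset of $\Omega$. The one genuinely delicate point is getting Step~1 right — namely verifying that Theorem~\ref{thm_converge1} really does apply to $\mu^\delta$ by choosing a sufficiently large decay exponent $\lambda$, and keeping the orientation of the $\delta$-monotonicity supplied by the comparison principle correct; the remainder is a bookkeeping assembly of Theorems~\ref{thm_comparison} and \ref{thm_converge1}, Lemma~\ref{lemma_measure_weak_converge}, and Dini's theorem.
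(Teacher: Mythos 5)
Your proof is correct and follows essentially the same route as the paper: solve the truncated classical problems via Theorem~\ref{thm_converge1} (verifying Assumption~\ref{assmp_source_decay} trivially for $\mu^\delta$ with $\lambda$ large), use Theorem~\ref{thm_comparison} to obtain monotone decay of $u^\delta$ as $\delta\to 0^+$ and maximality of the limit, pass to the limit in the Monge--Amp\`ere measure via Lemma~\ref{lemma_measure_weak_converge}, and combine the $h$-convergence from Theorem~\ref{thm_converge1} with the $\delta$-convergence to get (\ref{convergence_weak_solution}). Two small, harmless deviations: you derive the uniform $L^\infty$ bound on $u^\delta$ by letting $h\to 0$ in Lemma~\ref{lemma_fem_bound} whereas the paper cites \cite[Theorem~$10.4$]{Bakelman94} directly, and you invoke Dini's theorem for uniform convergence on $\overline{\Omega_\sigma}$ where the paper reuses Theorem~\ref{thm_convex_sequence_converge_domain}; Dini is in fact the cleaner tool here since monotonicity is already in hand and no subsequence extraction is needed.
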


\begin{proof}
For any $\delta>0$, we look for $u^{\delta}\in W^{+}(\Omega)\cap C^{0}(\overline{\Omega})$ satisfying
\begin{eqnarray}
\label{delta_eqs}
\left\{\begin{array}{lll}
\displaystyle\int_{\partial u^{\delta}(e)}R(\bp)d\bp & =& \mu^{\delta} (e)\quad \forall \text{  Borel set} \ e\subset\Omega,\\
\\
\ \hspace{60pt} u^{\delta}  &=& g\quad \text{ on } \partial\Omega.
\end{array}
\right.
\end{eqnarray}
It is easy to check that Assumption~\ref{assmp_source_decay} holds for $\mu^{\delta}$ 
with  $\lambda$ large enough. Thus, one obtains that $k<K$ where 
\begin{align*}
K = \dfrac{d+\tau+1}{\tau+2} + \dfrac{\lambda}{2}. 
\end{align*} 
Then by Theorem~\ref{thm_converge1}, there is a unique function $u^{\delta}
\in W^{+}(\Omega)\cap C^{0}(\overline{\Omega})$ satisfying (\ref{delta_eqs}). 
By \cite[Theorem~$10.4$]{Bakelman94}, it is easy to see that 
\begin{align}
\label{u_delta_bound}
\min_{\by\in \partial\Omega}g(\by) - \big(\sup_{\bx,\bx^{\prime}\in \Omega}\vert \bx - \bx^{\prime} \vert \big)
\cdot g_{R}^{-1}(\int_{\Omega}R(\bp) d\bp) \leq u^{\delta}(\bx) \leq \max_{\by\in\partial\Omega} g(\by), 
\quad \forall \bx \in \Omega.  
\end{align}
Here,
\begin{align*}
g_{R}(\rho):= \int_{B_{\rho}(\boldsymbol{0})} R(\bp) d\bp,\quad \forall \rho>0.
\end{align*}
By Theorem~\ref{thm_comparison}, for any $0<\delta^{\prime}<\delta$, it holds:
\begin{align}
\label{monotone_decay}
u^{\delta^{\prime}}(\bx) \leq u^{\delta}(\bx)\quad \forall \bx \in \Omega.
\end{align}
By (7.4)-(7.5), we know that $\displaystyle \lim_{\delta\rightarrow 0^+}u^{\delta}(\bx)$ exists for any $\bx \in \Omega$. Then we define 
\begin{align*}
u(\bx) := \lim_{\delta\rightarrow 0^+}u^{\delta}(\bx)\quad \forall \bx \in \Omega.
\end{align*}
Obviously, $u \in W^{+}(\Omega)$ and for any $\delta > 0$, it holds: $u(\bx) \leq u^{\delta}(\bx),\ \forall \bx \in \Omega.$
Since $u^{\delta}|_{\partial \Omega} = g$ for any $\delta >0$,  then we obtain
\begin{align*}
\limsup_{\Omega\ni\bx^{\prime}\rightarrow \bx} u(\bx^{\prime}) \ \ \  \leq\ \  g(\bx)
\quad \forall \bx\in \partial\Omega.
\end{align*}
By Theorem~\ref{thm_convex_sequence_converge_domain}, (\ref{u_delta_bound}) and (\ref{monotone_decay}),  we get that
\begin{align*}\left\{\begin{array}{lll}
& \displaystyle\lim_{\delta\rightarrow 0^+}\Vert u^{\delta} - u\Vert_{L^{\infty}(\overline{\Omega_{\sigma}})}\ \ =\ \ 0,
\quad \forall \sigma>0, \\ \\
&\displaystyle \int_{\partial u(e)}R(\bp)d\bp = \mu (e)\quad \forall e \text{ a Borel set of } \Omega.
\end{array}
\right.
\end{align*}
Thus $u$ satisfies (\ref{weak_eqs}). 

For any function $v\in W^{+}(\Omega)$ satisfies (\ref{weak_eqs}), we know, by Theorem~\ref{thm_comparison},  that
$
v(\bx) \leq u^{\delta}(\bx)$ for all  $ \bx\in \Omega \ \text{and} \ \delta>0,
$
which  implies that  
$v(\bx) \leq u(\bx), \forall \bx \in \Omega.$
Therefore, $u$ satisfies (1.3). 

Finally, by Applying Theorem~\ref{thm_converge1} to $u^{\delta}$ and $u_{h}^{\delta}$, 
we obtain (\ref{convergence_weak_solution}) immediately. 
\end{proof}
\vspace{20pt}

\appendix
\section{Proof of Theorem~\ref{thm_border_converge}}
\label{sec_proof_thm_border_converge}
\begin{proof}
We follow the original proof of \cite[Theorem~$10.6$]{Bakelman94}, which consists of three parts. 
In the following, we give detailed explanation of the first part, which is very geometrically intuitive.
Then, we give revision to the second part due to Assumption~\ref{assmp_R}, which is weaker than 
\cite[Assumption~$10.1$]{Bakelman94}. Finally, in the third part we explain why the revision made in 
the second part does not affect the analysis. 

\begin{itemize}
\item[Part $1$.]
Suppose that $b_{0}$ does not coincide with $\tilde{b}$ on $\partial\Omega$. 
By Lemma~\ref{lemma_border_upper_bound}, $b_{0} \leq \tilde{b}$ on $\partial\Omega$. 
Then, there is $\bx_{0} \in \partial\Omega$ such that 
\begin{align*}
b_{0}(\bx_{0}) < \tilde{b}(\bx_{0}). 
\end{align*}
Now we introduce special Cartesian coordinates in $\mathbb{R}^{d}$ and $\mathbb{R}^{d+1}$:
the axes $x^{1},\cdots, x^{d-1}$  are in the supporting $(d-1)$-dimensional plane $\alpha$ of $\partial \Omega$ 
at the point $\bx_{0}$, the axis $x^{d}$ is orthogonal to $\alpha$, and finally axis $z$ is orthogonal to the hyperplane $\mathbb{R}^{d}$. For simplicity, we take  the point $\bx_{0}\in\mathbb{R}^{d}$ 
 to be $\boldsymbol{0}\in\mathbb{R}^{d}$ and we  define two points in $\mathbb{R}^{d+1}$ by
\begin{align*}
\bQ:= (0,\cdots, 0, \tilde{b}(\boldsymbol{0}))\text{  and  } 
\overline{\bQ}:= (0,\cdots, 0, b_{0}(\boldsymbol{0}))
\end{align*}
and  for any  $0<\delta <1$,   we introduce two new points in $\mathbb{R}^{d+1}$
\begin{align*}
\bQ^{\prime}:= (0,\cdots, 0, \tilde{b}(\boldsymbol{0}) - \delta \Delta l)\text{  and  } 
\bQ^{\prime \prime}:= (0,\cdots, 0, b_{0}(\boldsymbol{0}) - \delta \Delta l),
\end{align*}
where $\Delta l := \tilde{b}(\boldsymbol{0}) - b_{0}(\boldsymbol{0})$. 
Obviously, $\bQ$, $\overline{\bQ}$, $\bQ^{\prime}$ and $\bQ^{\prime\prime}$ are all along the $z$-axis. 
Now consider two hyperplanes $\beta^{\prime}$ and $\beta^{\prime \prime}$ in $\mathbb{R}^{d+1}$ by equations: 
\begin{align*}\left\{\begin{array}{lll}
\beta^{\prime}&: \quad z = \tilde{b}(\boldsymbol{0}) - \delta \Delta l - \gamma^{-1}x^{d},\\ \\
\beta^{\prime \prime}&: \quad z = b_{0}(\boldsymbol{0}) - \delta \Delta l,
\end{array}
\right.
\end{align*}
where $\gamma$ is a sufficient small positive number. 

The task in part $1$ is to show (\ref{claim_intersect10}) holds. That is,  we need to prove that 
\begin{align*}
\int_{\partial V(\overline{\bQ})}R(\bp) d\bp \leq d_{2}\gamma^{\lambda + \frac{d+\tau+1}{\tau+2}}.
\end{align*}
Here  $V$ is the convex cone with the vertex o  $\overline{\bQ}$ and the basis   $\beta^{\prime}\cap \bK$ and $\bK$ is defined in (A.1).

We would like to point out that with respect to these three numbers $\delta, \gamma$ and $n$, 
$\delta\rightarrow 0$ implies that $\gamma\rightarrow 0$, and $\gamma\rightarrow 0$ implies 
that $n\rightarrow +\infty$ in the following analysis.
In part $1$ of the proof, we choose $0< \delta < 1$ arbitrarily, $\gamma>0$ small enough, and   $n$ large enough.

Let $Z = \partial \Omega \times \mathbb{R}\subset \mathbb{R}^{d+1}$. Then $Z$ bounds some convex body 
$\bK$ together with the hyperplanes $\beta^{\prime}$ and $\beta^{\prime\prime}$. Here the convex body $\bK$ is define by 
\begin{align}
\label{def_K}
\bK := \{ (\bx, z)\in \overline{\Omega}\times\mathbb{R}: 
b_{0}(\boldsymbol{0})-\delta \Delta l \leq z \leq \tilde{b}(\boldsymbol{0})-\delta \Delta l - \gamma^{-1}x^{d}\}.
\end{align}
It is easy to see that $\bK$ is a closed set in $\mathbb{R}^{d+1}$ and $\text{Int}(\bK)\neq \emptyset$ for any $\delta,\gamma>0$.
We define 
\begin{align*}
H(\bK):=\{\bx\in\mathbb{R}^{d} : \exists z \in \mathbb{R}\text{ such that }(\bx,z)\in \bK\},
\end{align*}
which is the projection of $\bK$ on $\mathbb{R}^{d}$. 
If $\gamma>0$ is small enough,  we know that
\begin{align*}
H(\bK) = \{\bx\in \overline{\Omega}: x^{d} \leq \bar{x}^{d} \}, \ \text{with}\ 
\bar{x}^{d}:= \gamma (\tilde{b}(\boldsymbol{0}) - b_{0}(\boldsymbol{0})). 
\end{align*}
According to Lemma~\ref{lemma_continuous_border} and part $(a)$ of Assumption~\ref{assmp_border}, 
for any $n\in\mathbb{N}$, $v_{n}$ can be extended continuously to $\partial\Omega_{n}$ such that 
$v_{n}(\bx) = b_{n}(\bx),\ \forall \bx \in \partial\Omega_{n}.$
We define
\begin{align*}
S_{v_{n}} := \{ (\bx,v_{n}(\bx)): \bx \in \overline{\Omega_{n}}\},\quad \forall n\in\mathbb{N}. 
\end{align*}
In the following, we give five important claims (\ref{claim_intersect1}) - (\ref{claim_intersect5}):

1) for any given $\gamma>0$, it holds:
\begin{align}
\label{claim_intersect1}
\text{Int}(S_{v_{n}}) \cap \text{Int}(\bK)\neq \emptyset,\text{ with} \ n \text{ large enough}.
\end{align}

In fact, due to the definition of $b_{0}$, there is $\bx\in\text{Int}(H(\bK))$ satisfying 
\begin{align*}
b_{0}(\boldsymbol{0}) - \delta \Delta l < v_{0}(\bx) < b_{0}(\boldsymbol{0}) + \frac{1}{2}(1-\delta)\Delta l.
\end{align*}
Since $b_{0}(\boldsymbol{0})+\frac{1}{2}(1-\delta)\Delta l < \tilde{b}(\boldsymbol{0})-\delta \Delta l$, 
we can choose $\bx\in\text{Int}(H(\bK))$ close enough to $\boldsymbol{0}$ such that 
\begin{align*}
b_{0}(\boldsymbol{0}) - \delta \Delta l < v_{0}(\bx) < \tilde{b}(\boldsymbol{0}) - \delta \Delta l - \gamma^{-1}x^{d}.
\end{align*}
Since $v_{n}(\bx)$ converges to $v_{0}(\bx)$ as $n\rightarrow +\infty$, then { if } $n$ is large enough, it holds:
\begin{align*}
b_{0}(\boldsymbol{0}) - \delta \Delta l < v_{n}(\bx) < \tilde{b}(\boldsymbol{0}) - \delta \Delta l - \gamma^{-1}x^{d}.
\end{align*} 
Then, $(\bx, v_{n}(\bx))\in \text{Int}(S_{v_{n}})\cap \text{Int}(\bK)$, if $n$ is large enough. Therefore the claim (\ref{claim_intersect1}) holds true.
\vspace{5pt}

2) If $ \gamma>0$  is small enough and 
$n$ \text{ is large enough}, it holds:
\begin{align}
\label{claim_intersect2}
\partial \bK \cap S_{v_{n}} \subset (\partial \bK \cap \beta^{\prime})\backslash Z.
\end{align}

In fact, we can  easily see that
$\partial\bK = \Gamma \cup \tilde{H}(\bK)\cup (\partial\bK \cap \beta^{\prime}), $ where
\begin{align*}\left\{\begin{array}{lll}
 \Gamma\ :=\ \{(\bx, z)\in \mathbb{R}^{d+1}:\bx \in \partial H(\bK),\
b_{0}(\boldsymbol{0}) - \delta \Delta l < z < \tilde{b}(\boldsymbol{0}) - \delta \Delta l - \gamma^{-1}x^{d} \},\\ \\
\tilde{H}(\bK)\ := \ \{ (\bx, b_{0}(\boldsymbol{0})-\delta \Delta l): \bx \in H(\bK) \}.
\end{array}
\right.\end{align*}If $\gamma > 0$ is small enough, then $\Omega\cap \{\bx \in \mathbb{R}^{d}: x^{d} = \bar{x}^{d} \}\neq \emptyset$. 
Then we have 
\begin{align*}\left\{\begin{array}{lll}
& H(\bK) = \{\bx \in \overline{\Omega}: x^{d}\leq \bar{x}^{d} \},\
\tilde{H}(\bK)=\{(\bx, b_{0}(\boldsymbol{0})-\delta \Delta l): x\in \overline{\Omega},\quad x^{d}\leq \bar{x}^{d} \},\\ \\
& \Gamma =\{(\bx, z)\in \mathbb{R}^{d+1}:\bx \in \partial\Omega, \  x^{d} < \bar{x}^{d}, \
b_{0}(\boldsymbol{0}) - \delta \Delta l < z < \tilde{b}(\boldsymbol{0}) - \delta \Delta l - \gamma^{-1}x^{d} \},\\ \\
& \overline{\Gamma} =\{(\bx, z)\in \mathbb{R}^{d+1}:\bx \in \partial\Omega, \  x^{d} \leq \bar{x}^{d}, \
b_{0}(\boldsymbol{0}) - \delta \Delta l \leq z \leq \tilde{b}(\boldsymbol{0}) - \delta \Delta l - \gamma^{-1}x^{d} \}.
\end{array}
\right.
\end{align*}

The claim (A.2) follows directly if we can prove that $S_{v_{n}}\cap \overline{\Gamma}=S_{v_{n}}\cap \tilde{H}(\bK)=\emptyset$ if $\gamma>0$ is small enough 
and $n$ is large enough.

Firstly, we show that $S_{v_{n}}\cap \overline{\Gamma}=\emptyset$ if $\gamma>0$ is small enough 
and $n$ is large enough. If not, then  there is a subsequence of $\mathbb{N}$, still denote by $\mathbb{N}$ , 
for simplicity, such that 
$S_{v_{n}}\cap \overline{\Gamma}\neq\emptyset$ for any $n\in\mathbb{N}$. 
Then for any $n\in\mathbb{N}$, there is $\bx_{n}\in \partial\Omega_{n}\cap \Omega$, such that 
$
x_{n}^{d}\leq \bar{x}^{d},$ and $b_{0}(\boldsymbol{0}) - \delta \Delta l \leq v_{n}(\bx_{n})
\leq \tilde{b}(\boldsymbol{0}) - \delta \Delta l - \gamma^{-1}x_{n}^{d}. 
$
According to Assumption~\ref{assmp_boundary_para}, $\lim_{\gamma\rightarrow 0}^{T} H(\bK) 
= \boldsymbol{0}\in \mathbb{R}^{d}$.  From $(b)$ of Assumption~\ref{assmp_border},  we obtain 
\begin{align*}
\sup_{\bx \in\partial\Omega, \ x^{d}\leq\bar{x}^{d}}\vert \tilde{b}(\bx) - \tilde{b}(\boldsymbol{0}) \vert {\longrightarrow} 0, \ \text{as}\ \gamma\rightarrow0.
\end{align*}
Hence we can choose $\gamma>0$ small enough, such that  it holds:
\begin{align*}
\vert \tilde{b}(\bx) - \tilde{b}(\boldsymbol{0}) \vert < \frac{1}{3}\delta \Delta l,
\quad \forall \bx \in \{ \by\in\partial\Omega: y^{d} \leq \bar{x}^{d} \}.
\end{align*}
Since $\bx_{n}\in \{ \by \in \partial\Omega: y^{d} \leq \bar{x}^{d}\}$, then we get
$$\vert \tilde{b}(\bx_{n}) - \tilde{b}(\boldsymbol{0}) \vert < \frac{1}{3}\delta \Delta l,
\quad \forall n\in \mathbb{N}. $$
According to part $(b)$ of Assumption~\ref{assmp_border}, there is a subsequence of $\mathbb{N}$ 
which we still denote by $\mathbb{N}$ for the sake of simplicity, such that 
\begin{align*}
(\bx_{n}, v_{n}(\bx_{n}))=  (\bx_{n}, b_{n}(\bx_{n}))\longrightarrow
(\tilde{\bx}, \tilde{b}(\tilde{\bx})), \ \text{as}\ n\rightarrow+\infty
\end{align*}
for some point $\tilde{\bx}\in \{\by\in\partial\Omega: y^{d}\leq \bar{x}^{d}\}$. 
Then if $n\in\mathbb{N}$ large enough, 
\begin{align*}
\vert v_{n}(\bx_{n}) - \tilde{b}(\boldsymbol{0}) \vert < \frac{2}{3}\delta \Delta l,
\end{align*}which implies that if $n\in\mathbb{N}$ large enough, 
\begin{align*}
v_{n}(\bx_{n}) > \tilde{b}(\boldsymbol{0}) - \frac{2}{3}\delta \Delta l 
> \tilde{b}(\boldsymbol{0}) - \delta \Delta l - \gamma^{-1}x_{n}^{d}.
\end{align*}
This is a contradiction. Thus $S_{v_{n}}\cap \overline{\Gamma} = \emptyset$ for $\gamma>0$ small enough and $n$ large enough. 

Secondly, we show that $S_{v_{n}}\cap \tilde{H}(\bK)=\emptyset$ if $\gamma>0$ is small enough and $n$ 
is large enough. According to Assumption~\ref{assmp_boundary_para}, we know that $\lim_{\gamma\rightarrow 0}^{T} H(\bK) 
= \boldsymbol{0}\in \mathbb{R}^{d}$.  By the definition of $b_{0}$,  we can see that if $\gamma>0$ is small enough, 
\begin{align*}
v_{0}(\bx) > b_{0}(\boldsymbol{0}) - \frac{1}{6}\delta \Delta l, \quad \forall \bx \in \text{Int}(H(\bK))
\end{align*}
and there is a point $\tilde{\bx}\in H(\bK)\cap \Omega$ such that 
\begin{align*}
\vert v_{0}(\tilde{\bx}) - b_{0}(\boldsymbol{0}) \vert < \frac{1}{6}\delta \Delta l. 
\end{align*}
Since $\bar{x}^{d}:= \gamma (\tilde{b}(\boldsymbol{0}) - b_{0}(\boldsymbol{0}))$ and  $H(\bK) = \{\bx \in \overline{\Omega}: x^{d}\leq \bar{x}^{d} \}$ for $\gamma>0$  small enough, we can choose $\tilde{\bx}$ satisfying $\tilde{x}^{d} = \bar{x}^{d}$ if $\gamma>0$ is small enough. 
Obviously, $\tilde{\bx} \in \text{Int}(\{\bx\in \partial H(\bK): x^{d} = \bar{x}^{d} \})$.
We define 
\begin{align*}
E(\bK):= \{ \bx \in H(\bK): & \exists \by \in \partial\Omega\text{ with } y^{d}\leq \bar{x}^{d} 
\text{ such that } \bx \text{ is in the line segment} \\
& \text{ between } \by \text{ and } \tilde{\bx}, 
\text{ and } \text{dist}(\bx, \tilde{\bx}) \leq \frac{1}{2} \text{dist}(\by, \tilde{\bx}) \}. 
\end{align*}
It is easy to see that $E(\bK)$ is a closed subset of $\Omega$ and  $\text{dist}(\partial\Omega, E(\bK))>0$. 
From \cite[Lemma~$3.1$]{Bakelman94}, (\ref{assmp_subdomains}) and part $(a)$ of Assumption~\ref{assmp_function_decay},  we get
\begin{align*}
\lim_{n\rightarrow +\infty}\Vert v_{n} - v_{0} \Vert_{L^{\infty}(E(\bK))}= 0,
\end{align*}
which implies that  if $n$ is large enough, 
\begin{align*}
v_{n}(\tilde{\bx}) <  b_{0}(\boldsymbol{0})+\frac{1}{4}\delta \Delta l,\ \text{and}\ 
v_{n}(\bx) >  b_{0}(\boldsymbol{0}) -\frac{1}{4}\delta \Delta l, \ \forall \bx \in E(\bK). 
\end{align*}
Due to the fact that $v_{n}\in W^{+}(\Omega_{n})$ and the definition of $E(\bK)$, we know that if $n$ is large enough, 
\begin{align*}
v_{n}(\bx) > b_{0}(\boldsymbol{0}) - \frac{3}{4}\delta \Delta l,\quad \forall \bx \in H(\bK)\cap \Omega_{n}.
\end{align*}
Thus if $n$ is large enough, 
$v_{n}(\bx) > b_{0}(\boldsymbol{0}) - \delta \Delta l,\quad \forall \bx \in H(\bK)\cap \overline{\Omega_{n}}$,  which shows that $S_{v_{n}}\cap \tilde{H}(\bK)=\emptyset$ if $\gamma>0$ is small enough and $n$ is large enough. 
\vspace{5pt}

3)If $\gamma>0$ is small enough and $n\in\mathbb{N}$ is large enough,
\begin{align}
\label{claim_intersect3}
\exists \bQ_{n}\in \text{Int}\bK \cap \textbf{Int}(S_{v_{n}}) \text{ such that }
\lim_{n\rightarrow+\infty}\text{dist}(\overline{\bQ}, \bQ_{n})\ =\ 0.
\end{align}

In fact, $\forall\epsilon>0$, by  the definitions of $b_{0}$ and $H(\bK)$, 
there is $\bx\in \text{Int}(H(\bK))$ such that $\vert \bx\vert< \epsilon$ and there holds:
\begin{align*}
 -\frac{1}{2}\min (\epsilon, \delta \Delta l) < v_{0}(\bx) -b_{0}(\boldsymbol{0})
< \frac{1}{2}\min (\epsilon, (1-\delta)\Delta l - \gamma^{-1}x^{d}).
\end{align*}
By part $(a)$ of Assumption~\ref{assmp_function_decay}, for any $n\in \mathbb{N}$ large enough, 
\begin{align*}
 -\min (\epsilon, \delta \Delta l) < v_{n}(\bx) -b_{0}(\boldsymbol{0})
< \min (\epsilon, (1-\delta)\Delta l - \gamma^{-1}x^{d}).  
\end{align*}
Thus for any $n\in\mathbb{N}$ large enough, we can infer 
\begin{align*}
\text{dist}((\bx,v_{n}(\bx)), \overline{\bQ})< \epsilon\  \text{and} \ (\bx ,v_{n}(\bx))\in \text{Int}(K).
\end{align*}
By (\ref{assmp_subdomains}), $\bx\in \Omega_{n}$ if $n\in\mathbb{N}$ is large enough. 
Therefor, (\ref{claim_intersect3}) holds true. \\

4) If $\gamma>0$ is small enough and $n\in\mathbb{N}$ is large enough,
\begin{align}
\label{claim_intersect4}
\partial V_{n}(\bQ_{n}) \subset \partial v_{n}(\text{Int}(H_{n}(\bK))),
\end{align}
where $\bQ_{n}$ is defined in (\ref{claim_intersect3}) and 
\begin{align*}\left\{\begin{array}{lll}
S_{n}(\bK) :=  S_{v_{n}}\cap \bK,\quad \beta^{\prime}(\bK) := \beta^{\prime}\cap \bK, \\ \\
H_{n}(\bK):=  \{\bx \in H(\bK): \exists z\in \mathbb{R}\text{ such that }(\bx, z)\in S_{n}(\bK)\},\\ \\
 V_{n}\ \text{is the convex cone with the vertex}\  \bQ_{n}\ \text{ and the base } \ \beta^{\prime}(\bK).
\end{array}
\right.
\end{align*}

In fact, by (\ref{claim_intersect2}),  we see that
\begin{align*}
H_{n}(\bK) = \{ \bx \in H(\bK)\cap \overline{\Omega_{n}}:
 \tilde{b}(\boldsymbol{0}) -\delta \Delta l - \gamma^{-1}x^{d} \geq v_{n}(\bx)\}.
\end{align*}for $\gamma>0$ small enough and $n\in\mathbb{N}$  large enough.

We define functions $\tilde{V}_{n}$ in $H(\bK)$ by
\begin{align*}
\tilde{V}_{n}(\bx) = \inf_{z\in \mathbb{R}}(\bx, z)\in V_{n},\quad \forall \bx \in H(\bK). 
\end{align*}
Then $\tilde{V}_{n}\in W^{+}(H(\bK))$.
Furthermore, by (\ref{claim_intersect2}, \ref{claim_intersect3}), we can see that $
 \bQ_{n}\in S_{n}(\bK)\cap V_{n},$ and $\tilde{V}_{n}(\bx) \leq v_{n}(\bx),$ $ \forall \bx \in \partial H_{n}(\bK)$ if $\gamma>0$ is small enough and $n\in\mathbb{N}$ is large enough.

Let $T$ be a supporting hyperplane of $V_{n}$ at $\bQ_{n}$ with the equation 
\begin{align*}
z = z_{T} + \bp_{T}\cdot \bx. 
\end{align*}
Let $(\bx_{n}, z_{n}) = \bQ_{n}$ where $\bx_{n}\in \mathbb{R}^{d}$. Then  we obtain
\begin{align*}\left\{\begin{array}{lll}
& v_{n}(\bx_{n}) = z_{T} + \bp_{T}\cdot\bx_{n},\\ \\
& v_{n}(\bx) \geq z_{T} + \bp_{T}\cdot \bx,\quad \forall \bx \in \partial H_{n}(\bK).
\end{array}
\right.
\end{align*}
By (\ref{claim_intersect3}),  we know that
$\tilde{b}(\boldsymbol{0}) - \delta \Delta l - \gamma^{-1}x_{n}^{d} > z_{n}=v_{n}(\bx_{n}).$
Thus $\bx_{n}\in \text{Int}(H_{n}(\bK))$.  In the following, we shall show that $\bp_{T}\in \partial v_{n}(\text{Int}(H_{n}(\bK)))$. In fact, 
if $v_{n}(\bx) \geq z_{T}+\bp_{T}\cdot\bx,\quad \forall \bx \in H_{n}(\bK),$ then $\bp_{T}\in \partial v_{n}(\text{Int}(H_{n}(\bK)))$. 
On the other hand, if 
$
\{\bx \in H_{n}(\bK): v_{n}(\bx) < z_{T}+\bp\cdot\bx\}\neq \emptyset,
$
then by the fact
\begin{align*}
v_{n}(\bx) \geq z_{T} + \bp_{T}\cdot \bx,\quad \forall \bx \in \partial H_{n}(\bK),
\end{align*}
we know that 
\begin{align*}
\{\bx \in H_{n}(\bK): \tilde{V}_{n}(\bx) < z_{T}+\bp_{T}\cdot\bx\} \subset \text{Int}H_{n}(\bK). 
\end{align*}
By \cite[Lemma~$1.4.1$]{Gutierrez01},   $\bp_{T}\in \partial v_{n}(\text{Int}(H_{n}(\bK)))$. 
Thus (\ref{claim_intersect4}) holds true. 
\vspace{5pt}

5) If $\gamma>0$ is small enough and $n\in\mathbb{N}$ is large enough
\begin{align}
\label{claim_intersect5}
\int_{\partial V(\overline{\bQ})} R(\bp) d\bp\leq \liminf_{n\rightarrow +\infty}\int_{\partial V_{n}(\bQ_{n})}R(\bp)d\bp,
\end{align}
where $V$ is a convex cone, with the vertex  $\overline{\bQ}$ and the basis $\beta^{\prime}(\bK)$.  

In fact, since $\text{Int}(H_{n}(\bK)) \subset H(\bK)\cap \Omega_{n}$, 
then 
\begin{align*}
\int_{\partial v_{n}(\text{Int}(H_{n}(\bK)))}R(\bp) d\bp \leq \int_{\partial v_{n}(H(\bK)\cap \Omega_{n})}R(\bp) d\bp,
\end{align*}
which,  by claim (\ref{claim_intersect4}), shows that
\begin{align}
\label{claim_intersect6}
\int_{\partial V_{n}(\bQ_{n})}R(\bp) d\bp \leq \int_{\partial v_{n}(H(\bK)\cap \Omega_{n})}R(\bp) d\bp.
\end{align}

We know that 
\begin{align*}\left\{\begin{array}{lll}
\displaystyle\int_{\partial V_{n}(\bQ_{n})}R(\bp)d\bp = \int_{\mathbb{R}^{d}}\chi_{\partial V_{n}(\bQ_{n})}(\bp)R(\bp)d\bp, \\ \\
\displaystyle\int_{\partial V\overline{\bQ})}R(\bp)d\bp = \int_{\mathbb{R}^{d}}\chi_{\partial V\overline{\bQ})}(\bp)R(\bp)d\bp,
\end{array}
\right.
\end{align*}
where $\chi_{\partial V_{n}(\bQ_{n})}$ and $\chi_{\partial V(\overline{\bQ}))}$ are characteristic functions. 
According to Fatou's Lemma, 
\begin{align*}
\int_{\mathbb{R}^{d}} \liminf_{n\rightarrow + \infty}\big( \chi_{\partial V_{n}(\bQ_{n})}(\bp) R(\bp)\big)d\bp 
\leq \liminf_{n\rightarrow +\infty}\int_{\mathbb{R}^{d}}\chi_{\partial V_{n}(\bQ_{n})}(\bp) R(\bp)d\bp. 
\end{align*}
Hence,  to prove (\ref{claim_intersect6}), it is sufficient to show that 
\begin{align}
\label{claim_intersect7}
\liminf_{n\rightarrow +\infty}\chi_{\partial V_{n}(\bQ_{n})}(\bp)  \geq \chi_{\partial V (\overline{\bQ}))}(\bp),
\quad \forall \text{ a.e. }  \bp\in\mathbb{R}^{d}.
\end{align}
Let $T$ be a supporting hyperplane of $V$ at $\overline{\bQ}$ with 
\begin{align*}
z = z_{T} + \bp_{T}\cdot \bx. 
\end{align*}
We notice that the projection of $\beta^{\prime}(\bK)$ onto $\mathbb{R}^{d}$ is $H(\bK)$ 
and $\overline{\bQ}$ is the vertex of the convex cone $V$. Then we know that 
\begin{align*}
\bp_{T} \in \text{Int}\big( \partial V (\overline{\bQ}) \big) \Longleftrightarrow
\tilde{b}(\boldsymbol{0}) - \delta \Delta l - \gamma^{-1}x^{d} > 
z_{T} + \bp_{T}\cdot \bx,\quad \forall \bx \in \partial H(\bK). 
\end{align*}
We define 
\begin{align*}
\epsilon_0 :=\inf_{\bx \in \partial H(\bK)} \big( \big( \tilde{b}(\boldsymbol{0}) - \delta \Delta l - \gamma^{-1}x^{d} \big) 
- \big( z_{T} + \bp_{T}\cdot \bx\big) \big).
\end{align*}
Then $\epsilon_0>0$.
For any $n\in\mathbb{N}$, we choose $z_{n}\in\mathbb{R}$ such that 
\begin{align*}
z = z_{n} + \bp_{T}\cdot \bx
\end{align*}
is a hyperplane passing through $\bQ_{n}$. 
By (\ref{claim_intersect3}) and the fact that $\epsilon_0>0$,  
it holds:
\begin{align*}
\tilde{b}(\boldsymbol{0}) - \delta \Delta l - \gamma^{-1}x^{d} \geq 
z_{n} + \bp_{T}\cdot \bx,\quad \forall \bx \in \partial H(\bK). 
\end{align*}if $\gamma>0$ is small enough and $n\in\mathbb{N}$ is large enough.
Then $\bp_{T}\in \partial V_{n}(\bQ_{n})$,  which shows that (\ref{claim_intersect7}) holds. 
Thus (\ref{claim_intersect5}) is true. 
\vspace{15pt}

In the following, we finish the proof of part 1. According to Assumption~\ref{assmp_boundary_para}, if $\gamma>0$ is small enough, the Borel set $H(\bK) \subset  U_{\rho}(\boldsymbol{0}) \cap \Omega. $
By part $(b)$ of Assumption~\ref{assmp_function_decay}, 
\begin{align*}
\liminf_{n\rightarrow + \infty} \int_{\partial v_{n}(H(\bK)\cap \Omega_{n})}R(\bp) d\bp 
\leq C_{1}\big( \sup_{\bx \in H(\bK)} \text{dist}(\bx , \partial\Omega) \big)^{\lambda} \vert H(\bK) \vert.
\end{align*}
The last inequality with (\ref{claim_intersect5}) implies the inequality
\begin{align}
\label{claim_intersect8}
\int_{\partial V(\overline{Q})}R(\bp) d\bp \leq 
C_{1}\big( \sup_{\bx \in H(\bK)} \text{dist}(\bx , \partial\Omega) \big)^{\lambda} \vert H(\bK) \vert.
\end{align}
Clearly we have
\begin{align*}
\sup_{\bx \in H(\bK)} \text{dist}(\bx , \partial\Omega) = \gamma \Delta l.
\end{align*}

Let $P :=\{\bx\in \mathbb{R}^{d}: \eta(\boldsymbol{0}) \big( \Sigma_{i=1}^{d-1} \vert x^{i} \vert^{2}\big)^{\frac{\tau + 2}{2}}
\leq x^{d} \leq \gamma \Delta l\},
$
where $\eta(\boldsymbol{0})$ is the positive constant introduced in Definition~\ref{def_local_para}. Hence 
$H(\bK) \subset P$
and it holds:
\begin{align}
\label{claim_intersect9}
\vert P\vert &= m_{d-1}\int_{0}^{\gamma\Delta l} \big( \dfrac{l}{\eta(\boldsymbol{0})} \big)^{\frac{d-1}{\tau+2}}dl \nonumber\\
&= \frac{\tau+2}{d+1}m_{d-1} (\eta(\boldsymbol{0}))^{-\frac{d-1}{\tau+2}}
\big(\gamma \Delta l\big)^{\frac{d+\tau+1}{\tau+2}}=d_{1}\gamma^{\frac{d+\tau+1}{\tau+2}}.
\end{align}
From (\ref{claim_intersect8})- (\ref{claim_intersect9}), we know that if $\gamma>0$ is small enough,  
\begin{align}
\label{claim_intersect10}
\int_{\partial V(\overline{\bQ})}R(\bp) d\bp \leq d_{2}\gamma^{\lambda + \frac{d+\tau+1}{\tau+2}}, 
\end{align}
where $d_{2} = C_{1} d_{1}(\Delta l)^{\lambda}$ and $d_{1}$ is a positive constant depending only on 
given constants $0\leq \tau <+\infty$, $\eta(\boldsymbol{0})>0$, $\Delta l$ and $m_{d-1}$. 
(Notice that $\tau$ and $\eta(\boldsymbol{0})$ are introduced in (\ref{def_local_para}), 
$m_{d-1}$ is the volume of the unit $(d-1)$-ball)
\\
\\

\item[Part $2$.]
According to \cite[($10.45$)]{Bakelman94} and \cite[Lemma~$10.4$]{Bakelman94}, 
We know that 
\begin{align}
\label{lower_bound1}
\int_{\partial V(\overline{\bQ})} R(\bp) d\bp \geq \int_{H^{d}} R(\bp) d\bp.
\end{align}
Here, $H^{d}$ is the $d$-dimensional cone of revolution with axis $p^{d}$ (axis in $\mathbb{R}^{d}$), 
vertex $(0,\cdots,0,-\frac{\delta}{\gamma})\in \mathbb{R}^{d}$ and base $H^{d-1}$, which is the $(d-1)$-dimensional ball given by the following equations:
$
\vert p^{1}\vert^{2}+\cdots + \vert p^{d-1} \vert^{2} \leq (C^{\prime})^{2} \gamma^{-\frac{2}{\tau + 2}}, 
\  p^{d} = -C^{\prime \prime} \gamma^{-1}
$
 and the constants $C^{\prime}$ and $C^{\prime\prime}$ are given by 
\begin{align*}
C^{\prime} = \frac{\tau+2}{\tau+1}(1-\delta) ( \Delta l ) ^{\frac{\tau + 2}{\tau + 1}}  (\eta(\boldsymbol{0}))^{\frac{1}{\tau + 2}},\ 
 C^{\prime\prime} = \frac{\tau + 2 - \delta}{\tau + 1}.
\end{align*}
Obviously, $C^{\prime}$ and $C^{\prime\prime}$ do not depend on $\gamma$ and have positive limits as $\delta \rightarrow 0$. 

The convex cone $H^{d}$ lies between two parallel hyperplanes in $\mathbb{R}^{d}$
\begin{align*}
p^{d} = -\delta \gamma^{-1},\quad p^{d} = -\frac{\tau + 2 -\delta}{(\tau + 1)\gamma}.  
\end{align*}
In the original proof of \cite[Theorem~$10.6$]{Bakelman94}, 
\begin{align}
\label{lower_bound2}
\int_{H^{d}} R(\bp) d\bp \geq C_{0}\int_{H^{d}} \vert \bp \vert^{-2k} d\bp,
\end{align}
due to \cite[Assumption~$10.1$]{Bakelman94}. However, \cite[Assumption~$10.1$]{Bakelman94} 
is so restrictive that the Guassian curvature equation does not satisfy. Hence we use 
Assumption~\ref{assmp_R} instead. 

The revision we made is to require two positive parameters $\delta$ and $\gamma$ satisfying
\begin{align}
\label{delta_gamma}
\gamma \leq r_{0}^{-1}\delta
\end{align}
where $r_{0}$ is the positive constant defined in Assumption~\ref{assmp_R}. 

If (\ref{delta_gamma}) is satisfied, then by (\ref{lower_bound1}, \ref{lower_bound2}), we know that 
\begin{align}
\label{lower_bound3}
\int_{\partial V(\overline{\bQ})} R(\bp) d\bp \geq \int_{H^{d}} R(\bp) d\bp \geq C_{0}\int_{H^{d}} \vert \bp \vert^{-2k} d\bp. 
\end{align}
Thus the remaining part of part $2$ is the same as the second part of the proof of \cite[Theorem~$10.6$]{Bakelman94} 
(right below the proof of \cite[Lemma~$10.4$]{Bakelman94}). 
\\
\\

\item[Part $3$.]
We completely follow the third part of the proof 
of \cite[Theorem~$10.6$]{Bakelman94}.
The task in part $3$ is to show inequalities based on (\ref{lower_bound3}) does not hold 
as $\gamma >0$ approaches to zero.  We only need to verify inequalities 
\cite[($10.68$), ($10.69$), ($10.74$), ($10.77$), ($10.78$)]{Bakelman94} 
hold if (\ref{delta_gamma}) is satisfied.  Thus the third part of the proof of \cite[Theorem~$10.6$]{Bakelman94} 
can go through completely  if (\ref{delta_gamma}) is satisfied. 
Therefore the proof is complete.  

\end{itemize}
\end{proof}

\vspace{20pt}

\end{document}